\definecolor{MyColor}{HTML}{0047AB}
\renewcommand{\@secnumfont}{\bfseries}
\patchcmd{\section}{\scshape}{\bfseries}{}{}
\patchcmd{\section}{\normalfont}{\normalfont\color{MyColor}}{}{}
\patchcmd{\subsection}{\normalfont}{\normalfont\color{MyColor}}{}{}
\def\subsubsection{\@startsection{subsubsection}{3}
\z@{.5\linespacing\@plus.7\linespacing}{-.5em}%
{\normalfont\bfseries}}
\newtheorem{theorem}{Theorem}[section]
\newtheorem{lemma}[theorem]{Lemma}
\newtheorem{definition}[theorem]{Definition}
\newtheorem{proposition}[theorem]{Proposition} 
\newtheorem{corollary}[theorem]{Corollary}
\newtheorem{remark}[theorem]{Remark}
\def\command@factory#1{
\expandafter\def\csname b#1\endcsname{\mathbf{#1}}
\expandafter\def\csname fk#1\endcsname{\mathfrak{#1}}
\expandafter\def\csname bb#1\endcsname{\mathbb{#1}}
\expandafter\def\csname cl#1\endcsname{\mathcal{#1}}
\expandafter\def\csname bcl#1\endcsname{\mathbfcal{#1}}
}
\newcommand{\rmd}{\textnormal{d}}
\newcommand{\R}{\mathbb{R}}
\begin{document}

\title[Scaled quadratic variation]{Scaled quadratic variation for controlled rough paths and parameter estimation of fractional diffusions}

\author{James-Michael Leahy$^1$\orcidlink{0000-0003-4771-4476}}
\email{j.leahy@imperial.ac.uk}
\author{Torstein Nilssen$^2$}
\email{torstein.nilssen@uia.no}

\address{$^1$ Department of Mathematics, Imperial College London, United Kingdom}
\address{$^2$ Department of Mathematical Sciences, University of Agder, Norway}

\subjclass[2020]{
60G22, 
60L20, 
60H10, 
62F12, 
62M09, 
60F15, 
60G17, 
}
\keywords{
Fractional Brownian motion,
Rough paths,
Parameter estimation,
Strong limit theorems,
Quadratic Variation,
Stochastic differential equations}

\maketitle

\begin{abstract}
We introduce the concept of finite $\gamma$-scaled quadratic variation along a sequence of partitions for paths on a given interval. This concept, with historical roots in the study of Gaussian processes by Gladyshev (1961) and Klein \& Giné (1975), includes the fractional Brownian motion (fBm) with Hurst index $H$, which has finite $1-2H$-scaled quadratic variation. We show that a path that is controlled by a path with finite $\gamma$-scaled quadratic variation in the sense of M. Gubinelli inherits this property, and the corresponding scaled quadratic variation satisfies an Itô-isometry type formula. Moreover, we prove quantitative error bounds that establish a relationship between the convergence rates of the scaled quadratic variation of the controlled path and that of the controlling path. Additionally, we introduce a consistent estimator for the parameter $\gamma$ based on a single sample path, complete with quantitative error bounds. We apply these results to the parameter estimation for fractional diffusions. Our findings specify convergence rates for the estimation of both the Hurst index and the parameters in the noise vector fields. The paper concludes with numerical experiments that substantiate our theoretical findings
\end{abstract}
{\hypersetup{linkcolor=MyColor}
\setcounter{tocdepth}{2}
\tableofcontents}

\section{Introduction}

This work originated from the objective of estimating an unknown parameter $\theta \in \Theta\subset \bbR^M$ from discrete observations of the solution $y$ to a stochastic differential equation:
\begin{equation}\label{eq:rde_drift_intro}
\rmd  y_t = u_t(y_t) \rmd t + \sum_{k=1}^K \sigma_k(y_t;\theta)\rmd Z_t^k  \,,
\end{equation}
where $u: [0,T]\times \bbR^d\rightarrow \bbR^d$ denotes a time-dependent vector field, $\sigma_k(\cdot; \theta) : \R^d \rightarrow \R^d$, $k\in \{1,\ldots, K\}$, denotes a collection of parameterized  vector fields, and $Z :\Omega\times [0,T]\rightarrow \bbR^K$ denotes a driving stochastic path. In particular, $u$ is assumed to be independent of $\theta$ and not estimated. Equations of form \eqref{eq:rde_drift_intro} emerge in various contexts, such as in stochastic fluid dynamics, which we explore in our experimental section.

Our primary example and motivation is when the noise $Z=B^H$ is an $\bbR^K$-valued fractional Brownian motion (fBm) with Hurst index $H \in (\frac13, 1)$. We interpret \eqref{eq:rde_drift_intro} pathwise as a Young differential equation for $H \in (\frac12, 1)$ and as a rough differential equation for $H \in (\frac13, \frac12]$. We introduce estimators for both the Hurst index $H$ of the fBm and the parameter $\theta$, applicable irrespective of whether $H$ is known or unknown. These estimators are derived from numerical approximations of scaled quadratic variations, a concept with a longstanding history in the regularity estimation of Gaussian processes \cite{gladyshev1961new, klein1975quadratic, guyon1989convergence, coeurjolly2001estimating, istas1997quadratic, Begyn}. Our principal contribution is the demonstration of consistency and $\mathbb{P}$-almost sure convergence rates for these estimators in a high-frequency limit over a fixed interval $[0,T]$.

The estimation of the Hurst index and parameters in fBm-driven SDEs has been intensively studied, see e.g., \cite{han2021hurst, 10.1214/20-EJS1685, cont2022rough, kubilius2012rate, kubilius2017parameter} for recent results. Most of the existing literature considers additive noise and the regime $H>\frac{1}{2}$. When considering multiplicative noise, equations usually have only one noise source and can be transformed into the additive noise case \cite{kubilius2017parameter}. In the case $H<\frac{1}{2}$, we are only aware of the work \cite{10.1214/20-EJS1685} where the additive noise case has been considered and it has focused on the estimation of the drift parameters. To our knowledge, the only existing research explicitly addressing multiple-noise sources in the rough regime is the work by A. Papavasiliou, which proceed in an iterative fashion in which one recovers the rough path from the solution assuming the coefficients are known and then proceed with a maximum likelihood estimation to obtain an update for the parameters \cite{papavasiliou2016approximate, zhao2018p, papavasiliou2022inverse}. We present a different approach in this paper.

An early approach towards a pathwise understanding of differential equations driven by the Wiener process can be found in F\"ollmer's paper \cite{Follmer}. Here, the author introduces the notion of quadratic variation \emph{along a sequence of partitions} $\{ \pi_n \}$ of a path $x:[0,T]\rightarrow \bbR$:
$$
\langle x \rangle_t := \lim_{n \rightarrow \infty} \sum_{ [t_i,t_{i+1}] \in \pi_n \cap [0,t]}  (x_{t_{i+1}} - x_{t_i })^2.
$$
It is well known that for almost all sample paths of the Wiener process, $W$, 
$$
\langle W \rangle_t = t\,,
$$
if, for example, the partition is uniform $\pi_n = \{ \frac{iT}{n} \}_{i=0}^n$. 
Moreover, by It\^{o}'s formula, for any smooth function $f : \R^d \rightarrow \R$,
\begin{equation} \label{quadratic variation formula}
\langle f(y) \rangle_t = \sum_{k=1}^K \int_0^t |\sigma_k[f](y_r;\theta)|^2  \rmd r\,,
\end{equation}
where $y$ is the solution of \eqref{eq:rde_drift_intro} and $\sigma_k[f]$ denotes the vector field action of $\sigma_k$ on $f$. This observation can be used as a starting point to fit the observed values $y$ to the model \eqref{eq:rde_drift_intro}; one chooses a suitable cost function in order to minimize the distance between both sides of \eqref{quadratic variation formula} when $f$ ranges over a set of smooth functions. 

The notion of quadratic variation along a sequence of partitions does not directly extend to sources of noise that behave differently than the Wiener process under scaling. For example, in the case of 1-dimensional fBm $B^H$ (see below for a precise definition), we have
$$
\langle B^H \rangle_t = \left\{
\begin{array}{ll}
\infty & \textrm{ for } H < \frac12 \\ 
t & \textrm{ for } H = \frac12 \\ 
0 & \textrm{ for } H > \frac12 \\ 
\end{array} \right. ,
$$
see \cite[p.4]{BFGMS} (in fact, this also follows from our Lemma \ref{prop:sv_properties}). 

To address this limitation, we introduce a scaling parameter in the definition of quadratic variation, termed the $\gamma$-scaled quadratic variation. For a path $x:[0,T]\rightarrow \bbR^K$, it is defined as
\begin{equation} \label{eq:first qv}
\langle x \rangle_t^{(\gamma)} := \lim_{n \rightarrow \infty} \sum_{ [t_i,t_{i+1}] \in \pi_n \cap [0,t]} (t_{i+1} - t_i)^{\gamma} (x_{t_{i+1}} - x_{t_i })^2\,.
\end{equation}
The only non-trivial scaling parameter for fBm with Hurst index $H$ is $\gamma = 1-2H$. A general class of Gaussian processes for which the above limit exists for $\gamma\ne 0$ have been studied in the literature \cite{gladyshev1961new, klein1975quadratic}. Moreover, our concept of $\gamma$-scaled quadratic variation can be understood as a normalized quadratic variation along a potentially irregular partition, which has been used to estimate the regularity of Gaussian processes discussed in \cite{istas1997quadratic, Begyn}. 
For our purposes, we need the convergence in \eqref{eq:first qv} \emph{for all times} $t$ on some set in $\Omega$, a feature which has not been explored in any of these references. 

Our paper explores the following applications of scaled quadratic variation: 
\begin{enumerate}
\item We demonstrate that the concept of scaled quadratic variation is compatible with the rough path framework. Specifically, paths that are \emph{controlled} (in the Gubinelli sense) by a path with finite scaled variation also retain finite scaled variation. In the special case of fBm driving paths, we identify an appropriate analogue of \eqref{quadratic variation formula} for equation \eqref{eq:rde_drift_intro} interpreted in the pathwise sense. This is expressed as:
\begin{equation} \label{eq:scaled variations isometry}
\langle f(y) \rangle_T^{(1-2H)} = \sum_{k=1}^K \int_0^T |\sigma_k[f](y_r)|^2 \rmd r,
\end{equation}
\item \label{item:Hest} For an observed solution of \eqref{eq:rde_drift_intro} sampled at discrete time points, we propose an estimator for the scaling parameter $\gamma$. Notably, for the fBm case with $H > \frac12$, this estimator aligns with the one found in \cite{kubilius2017parameter}.
\item \label{item:Thetaest} In the special case fBm, we introduce an estimator for $\theta$:
$$
\hat{\theta} = \arg \min_{\theta \in \Theta} \sum_{f \in \mathbb{F}} \left| \langle f(y) \rangle_T^{(1-2H)} - \sum_{k=1}^K \int_0^T |\sigma_k[f](y_r;\theta)|^2 \rmd r \right|^2 \,,
$$
where $\mathbb{F}$ is a suitable set of functions $f: \R^d \rightarrow \R$.
\item When the dependence of the model on parameter $\theta$ is linear, we establish convergence rates for the estimators described in \ref{item:Hest} and \ref{item:Thetaest}.
\item To illustrate the practical utility of our method, we conduct and present a series of numerical experiments.
\end{enumerate}

Let us emphasize that, in points (3) through (5) above, the present paper will explore parameter estimation only when the dependence on the parameters $\theta$ is \emph{linear}. Although we expect to get convergence of the estimators also in the non-linear setting, it is not clear how to obtain convergence rates in this setting. This comes from the fact that, in the linear setting, the estimator has an analytic expression (under some non-degeneracy condition). 

In the fBm setting, the equality presented in \eqref{eq:scaled variations isometry} agrees with \cite[Theorem 5.12]{liu2020discrete}, under the condition that $H \in (\frac14, \frac34)$. A key point to emphasize is that our approach is not limited to $H < \frac34$. The authors in \cite{liu2020discrete} expand this equality to include a general Gaussian process as the driving noise and further establish a central limit theorem for convergence. This theorem serves as a crucial foundation for developing central limit theorems for parameter estimation.

The concept of $p$-variation along a sequence is closely related to the approach of this paper. For a path $x:[0,T]\rightarrow \bbR^K$, it is defined as
$$
[x]^{(p)}_t  := \lim_{n \rightarrow \infty} \sum_{ [t_i,t_{i+1}] \in \pi_n \cap [0,t]}  |x_{t_{i+1}} - x_{t_i }|^p \,.
$$
In particular, it holds that $[B^{H}]^{1/H} = c_H t$ (see \cite{Rogers}) where $c_{H} = \mathbb{E}(|B_1|^{1/H})$. The study in \cite{cont2019pathwise} further explores this notion, developing a pathwise integration theory that includes a change of variables formula and an It\^{o} isometry.

In a 1-dimensional context, the paper \cite{powervariation} investigates $p$-variation estimators of pathwise integrals on the form $\int_0^t u_s dB_s^H$ where $u$ is regular enough to understand the integration in the sense of Young. It is shown that 
$$
n^{-1+pH}\sum_{i=0}^{\lfloor tn \rfloor } \left| \int_{i/n}^{(i+1)/n} u_s dB_s^H \right|^p \rightarrow \mathbb{E}(|B_1|^p) \int_0^t |u_s|^p ds
$$
in the uniform convergence in probability. When $p=2$ and on the uniform grid this corresponds exactly to our scaling $\gamma = 1-2H$.
Also in the 1-dimensional context, the paper \cite{liu2023power} investigates $p$-variations of controlled rough paths in scenarios where the driving noise is fBm, leading to central limit type theorems. Additional results in this vein are also presented in the thesis \cite{zhou2018parameter}.

For differential equations driven by a 1-dimensional fBm, the parameter estimation methodologies of this paper could be derived using $p$-variation along a sequence of partitions. It is, however, not clear how the approach could be used in the presence of several sources of noise. Let us also emphasize that the pathwise nature of the present paper yields estimators based on a single observation.

The paper is organized as follows. In Section 2, we introduce the notation and basic preliminaries. This includes an overview of rough paths, controlled rough paths, rough differential equations, and fractional Brownian motion. In Section 3, we introduce scaled quadratic variation and covariation, illustrating its inheritance by controlled rough paths. We also prove a bound linking the convergence rates of these paths to those of the controlling path. Furthermore, this section presents an estimator for the scaling exponent and provides a bound for its convergence rate in relation to the scaled quadratic variation of the path. Section 4 focuses on the convergence rates of the scaled quadratic variation of fBm in the supremum norm and uses this finding in conjunction with the results in Section 3 to deduce the convergence rates of the scaled quadratic variation of solutions of fractional diffusions. Section 5 is devoted to the estimation of the Hurst parameter and the parameters of a linear parameterized vector field from a solution path of a fractional diffusion. We demonstrate the estimators' consistency and detail $\bbP$-a.s. convergence rates achieved over a dyadic sequence of partitions. In Section 6, we conclude with numerical experiments, which are presented in four examples: i) a 1-dimensional non-linear diffusion, ii) a 2-dimensional linear diffusion, iii) a 2-dimensional non-linear diffusion, and iv) a 2-dimensional Euler system with Lie transport noise (fluid RPDE).

\addtocontents{toc}{\protect\setcounter{tocdepth}{-1}}

\subsection*{Acknowledgments}
We would like to thank our colleagues for their invaluable insights and discussions, especially Darryl Holm, Patrick Kidger, Yanghui Liu, Anastasia Papavasiliou, and Samy Tindel. We thank Suprio Bhar, Purba Das and Barun Sarkar for pointing out an error in a previous version of the paper. Our gratitude also goes to Aythami Bethencourt de León for their contributions in testing and developing parts of the code-base used in the simulation of rough differential and partial differential equations in our experiments section. JML wishes to acknowledge the support received from the US AFOSR Grant FA8655-21-1-7034, which has been instrumental in this research.

\addtocontents{toc}{\protect\setcounter{tocdepth}{3}}

\section{Notation and preliminaries}

\subsection{Notation and conventions}

Given $T>0$, a partition $\pi$ of the interval $[0,T]\subset \bbR$ is defined to be the set of divisions of an interval $[0,T]$ into a finite number of adjacent intervals $[t_{i-1}, t_{i}]$, $i\in \{1,\ldots, \#\pi\}$, where $0=t_0<t_1<\cdots <t_{\# \pi}=T$. The mesh of $\pi$ is defined by $|\pi|=\max_{1\le i\le \#\pi}(t_{i}-t_{i-1})$. The set of all partitions of the interval $[0,T]$ is denoted by $\clD([0,T])$. A partition $\pi \in \clD([0,T])$ is said to be uniform if $t_i-t_{i-1}=|\pi|$ for all $i\in \{1,\ldots, \#\pi\}$ so that $\#\pi|\pi| =T$.
Given $t\in [0,T]$, denote by $\pi\cap [0,t]$ the set of adjacent intervals of $\pi$ that lie in $[0,t]$. Note that $\pi\cap [0,t]$ is not necessarily a partition of $[0,t]$ itself unless $t=t_i$ for some $i\in \{1,\ldots, \#\pi\}$. A sequence of partitions $\Pi=\{\pi_n\}_{n \in \mathbb{N}}$ is said to have vanishing mesh if $\lim_{n\rightarrow \infty} |\pi_n|=0$.

Given $d,m\in \bbN$ and a measurable set of $U\subset \bbR^m$ and a measurable function
$f:U\rightarrow \bbR^d$, we define 
$$\|f\|_{\infty}:=\underset{x\in U}{\operatorname{ess \, sup}}|f(x)|<\infty.$$ 
Denote by $C(U;\bbR^m)$  the space of continuous functions from $U$ to $\bbR^m$ (not necessarily bounded). Given  $l\in \bbN$, denote by $C^l(\bbR^d;\bbR^m)$ the space of $l$-times continuously differentiable functions from $\bbR^d$ to $\bbR^m$.
Denote by  $BV([0,T];\bbR^d)$ the  space of functions $f:[0,T]\rightarrow \bbR^d$ with bounded variation
$$[f]_{\textnormal{BV}}:=\sup_{\pi\in \clD([0,T])}\sum_{i=0}^{\#\pi -1} |\delta f_{t_i t_{i+1}}|<\infty\,,$$
where $\delta f_{st}:=f_t-f_s$ for $s,t\in [0,T]$.
Given  $\alpha\in (0,1)$, denote by $C^{\alpha}([0,T];\bbR^d)$ the space of functions $f: [0,T]\rightarrow \bbR^d$ with finite $\alpha$-H\"older semi-norm
$$
[f]_{\alpha}:=\sup_{s,t\in [0,T]}\frac{|\delta f_{st}|}{|t-s|^{\alpha}}<\infty\,,
$$
where we use the convention $0/0:=0$.  Given  $\alpha>0$, denote by $C^{\alpha}_2([0,T];\bbR^d)$ the space of functions $f: \Delta_T\rightarrow \bbR^d$ with finite $\alpha$-H\"older semi-norm
$$
[f]_{\alpha}:=\sup_{s,t\in [0,T]}\frac{|f_{st}|}{|t-s|^{\alpha}}<\infty\,,
$$
where we have defined $\Delta_T := \{ (s,t) \in [0,T]^2 : s \leq t \}$.
For $f\in C^{\alpha}([0,T];\bbR^d) \cup C_2^{\alpha}([0,T];\bbR^d)$, we denote
$$
\|f\|_{\alpha}=\|f\|_{\infty} + [f]_{\alpha}\,.
$$
Since the domain and range of a function will always be clear from the context, we will not indicate this in the notation of the norms or semi-norms.  Moreover, if the range is $\bbR$, then we will drop the range from the notation of the space as well.

Given a vector field $\sigma: \bbR^d\rightarrow \bbR^d$ and a differentiable scalar-valued function $f:\bbR^d\rightarrow \bbR$, denote by $\sigma[f]:\bbR^d\rightarrow \bbR$ the function defined by
$$
\sigma[f](x)=\sum_{j=1}^d\sigma^j(x)\partial_{x_j}f(x)\,, \quad x\in \bbR^d\,.
$$
Similarly, given a vector field $\sigma_1: \bbR^d\rightarrow \bbR^d$ and differentiable vector field $\sigma_2: \bbR^d\rightarrow \bbR^d$, denote by $\sigma_1[\sigma_2]: \bbR^d\rightarrow \bbR^d$ the vector field defined by 
$$
\sigma_1[\sigma_2]^i(x) = \sum_{j=1}^d \sigma_1^j(x) \partial_{x_j} \sigma_2^i(x)\,, \quad x\in \bbR^d\,.
$$
For a given matrix $A\in \bbR^{d\times k}$, denote by $\|A\|_2$ the Frobenius matrix norm of $A$ and by $\kappa(A) = \|A\|_2 \| (A^TA)^{-1} A^{T}\|_2$ the condition number of $A$.
Throughout the paper, we will indicate that a constant $C>0$ depends on quantities $q_1,\ldots, q_n$ by writing $C=C(q_1,\ldots, q_n)$.

\subsection{Rough paths, controlled rough paths, and rough differential equations}

\begin{definition}
Let $K \in\bbN $ and $\alpha \in (\frac13,\frac{1}{2}]$. An $\alpha$-H\"older-rough path is a  pair 
$$
\bZ=(z, \mathbb{Z}) \in C^{\alpha}([0,T];\R^{K}) \times C^{2 \alpha}_2 ([0,T]; \R^{K\times K}) =:\mathscr{C}^{\alpha}([0,T];\bbR^K)
$$
that satisfies Chen's relation: 
\begin{equation}\label{eq:chen_relation}
\mathbb{Z}_{s t}= \mathbb{Z}_{s u} + \delta z_{su} \otimes  \delta z_{u t} + \mathbb{Z}_{u t},   \quad  \forall (s,u,t) \in [0,T]^3 \quad  \text{such that} \quad  s \leq u \leq t \, .
\end{equation}
We define $[\bZ]_{\alpha} := [z]_{\alpha} + \sqrt{[\mathbb{Z}]_{2 \alpha}}$.
\end{definition}

If $z\in C^{\alpha}([0,T];\bbR^K)$ with $\alpha \in (\frac12,1]$, then $\bbZ\in C_2^{2\alpha}([0,T];\bbR^{K\times K})$ defined by the Young integral (\cite{Young36})
$$
\mathbb{Z}_{st}^{l,k} = \int_s^t \delta z_{sr}^l \rmd z_r^k\, ,
$$
satisfies the second identity in \eqref{eq:chen_relation} and can be understood as a continuous operation from  $C^{\alpha}([0,T];\R^{K})$ to $C_2^{2\alpha}([0,T];\R^{K\times K})$. If $\alpha \in (1/3,1/2]$, then, in general, it is not possible to define a continuous operation to define $\bbZ$ canonically, but for certain paths with a probabilistic structure, one can construct many $\bbZ$ satisfying \eqref{eq:chen_relation} (see, e.g., Theorem \ref{thm:fBm_lift} below). To simplify the notation below, we will abuse notation and define $\mathscr{C}^{\alpha}([0,T];\bbR^{K})=C^{\alpha}([0,T];\bbR^K)$ for $\alpha\in (1/2,1]$ (i.e., $\bZ=z$).

Next, we recall the notion of a controlled rough path \cite{Gub04}.
\begin{definition}
Let $z\in C^{\alpha}([0,T];\bbR^K)$ for some $\alpha \in (\frac13,1]$. A path $y\in C^{\alpha}([0,T];\bbR^d)$ is said to be controlled by $z$ if there exists a continuous path $y'\in  C^{\alpha}([0,T];\bbR^{d\times K})$ such that $y^{\sharp}: \Delta_T \rightarrow \bbR^d$ defined by
$$
y^{\sharp}_{st} := \delta y_{st} - y'_s\delta z_{st}\,, \quad (s,t)\in \Delta_T\,,
$$
satisfies $y^{\sharp}\in  C_2^{2 \alpha}([0,T]; \bbR^d)$. The path $y'$ is called the Gubinelli derivative and $y^{\sharp}$ is called the remainder. We denote by $\mathscr{D}^{2 \alpha}_z([0,T];\bbR^d)$ the vector space of paths controlled by $z$.
\end{definition}
\begin{proposition}\label{prop:chain_rule}
Let $z\in C^{\alpha}([0,T];\bbR^K)$ for $\alpha \in (\frac13,1]$ and $f\in C^2(\bbR^d)$. If  $y \in \mathscr{D}^{2 \alpha}_z([0,T];\bbR^d)$, then $f(y)\in \mathscr{D}^{2 \alpha}_z([0,T];\bbR^d)$ with Gubinelli derivative $f(y)'=Df(y) y'$.
\end{proposition}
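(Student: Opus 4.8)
The approach is to combine a second-order Taylor expansion of $f$ with the controlled-path decomposition of $y$; this is the classical result on composing a $C^2$ function with a controlled path, and the only point requiring any care is that $f$ is merely $C^2$ rather than $C^2_b$, so one must localize to the (compact) range of $y$.

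Concretely, I would first note that since $y$ is continuous on the compact interval $[0,T]$, the convex hull $K_y$ of the image $y([0,T])$ is a compact subset of $\bbR^d$, and because $f\in C^2(\bbR^d)$ the quantities $M_1:=\sup_{K_y}|Df|$ and $M_2:=\sup_{K_y}|D^2f|$ are finite, with $Df$ Lipschitz on $K_y$ with some constant $L$. These constants are the only place the $C^2$ hypothesis enters. As a warm-up I would check the regularity of the two candidate objects: $|\delta f(y)_{st}|\le M_1|\delta y_{st}|\le M_1[y]_\alpha|t-s|^\alpha$ gives $f(y)\in C^\alpha([0,T];\bbR^d)$, and since $[Df(y)]_\alpha\le L[y]_\alpha$ and $\|Df(y)\|_\infty\le M_1$, the product estimate $[gh]_\alpha\le\|g\|_\infty[h]_\alpha+\|h\|_\infty[g]_\alpha$ together with $y'\in C^\alpha([0,T];\bbR^{d\times K})$ yields $Df(y)y'\in C^\alpha([0,T];\bbR^{d\times K})$.

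The main step is the remainder bound. Taylor's theorem with integral remainder gives
$$\delta f(y)_{st}=Df(y_s)\,\delta y_{st}+R_{st},\qquad R_{st}=\int_0^1(1-\theta)\,D^2f\bigl(y_s+\theta\,\delta y_{st}\bigr)\bigl[\delta y_{st},\delta y_{st}\bigr]\,\rmd\theta,$$
and since $y_s+\theta\,\delta y_{st}=(1-\theta)y_s+\theta y_t\in K_y$ for all $\theta\in[0,1]$ we get $|R_{st}|\le\tfrac12 M_2[y]_\alpha^2|t-s|^{2\alpha}$, i.e. $R\in C_2^{2\alpha}$. Inserting the controlled decomposition $\delta y_{st}=y'_s\,\delta z_{st}+y^{\sharp}_{st}$ into the first term,
$$\delta f(y)_{st}=Df(y_s)y'_s\,\delta z_{st}+Df(y_s)\,y^{\sharp}_{st}+R_{st},$$
which is precisely the statement that the remainder of $f(y)$ relative to the Gubinelli derivative $Df(y)y'$ is $f(y)^{\sharp}_{st}=Df(y_s)\,y^{\sharp}_{st}+R_{st}$. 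Estimating $|Df(y_s)\,y^{\sharp}_{st}|\le M_1[y^{\sharp}]_{2\alpha}|t-s|^{2\alpha}$ and combining with the bound on $R$ shows $f(y)^{\sharp}\in C_2^{2\alpha}([0,T];\bbR^d)$, with $[f(y)^{\sharp}]_{2\alpha}\le M_1[y^{\sharp}]_{2\alpha}+\tfrac12 M_2[y]_\alpha^2$. Together with the regularity checks above, this gives $f(y)\in\mathscr{D}^{2\alpha}_z([0,T];\bbR^d)$ with $f(y)'=Df(y)y'$. I do not anticipate a genuine obstacle; the only thing worth flagging is the localization to $K_y$, without which the constants $M_1,M_2$ need not be finite.
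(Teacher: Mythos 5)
Your proof is correct. The paper states Proposition \ref{prop:chain_rule} without proof (it is the standard chain rule for controlled rough paths, as in Friz--Hairer), and your argument --- second-order Taylor expansion with integral remainder, substitution of the controlled decomposition $\delta y_{st}=y'_s\delta z_{st}+y^{\sharp}_{st}$, and localization of the $C^2$ bounds to the compact convex hull of $y([0,T])$ --- is exactly the standard route; the localization point you flag is indeed the only subtlety given that $f$ is assumed $C^2$ rather than $C^2_b$.
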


The prototypical example of a controlled rough path is the solution of a rough differential equation.

\begin{definition}\label{def:rde}
Let $\bZ\in \mathscr{C}^{\alpha}([0,T];\bbR^K)$ with $\alpha\in (1/3,1]$.
Let $\beta\in C([0,T] \times \bbR^d;\bbR^d)$ and $\sigma=(\sigma_{k})_{1\le k\le K}\in C^{\lfloor 1/\alpha\rfloor}(\bbR^d;\bbR^{d\times K})$.  A path $y\in C^{\alpha}([0,T];\bbR^d)$ is said to solve the  differential equation
\begin{equation}\label{eq:rde}
\rmd y_t = u_t(y_t)\rmd t + \sum_{k=1}^K \sigma_k(y_t) \rmd \bZ^k
\end{equation}
in the case if the 2-parameter map $y^{\sharp} : \Delta_T \rightarrow \R^d$ defined by
$$
y_{st}^{\sharp} : = \delta y_{st}-\int_s^tu_r(y_r)\rmd r -  \sum_{k=1}^K \sigma_k(y_s) \delta z_{st}^k
$$
belongs to $C_2^{2 \alpha}([0,T];\R^d)$, and when $\alpha\in (1/3,1/2]$, the 2-parameter map $y^{\natural} : \Delta_T \rightarrow \R^d$ defined by
$$
y_{st}^{\natural} : = \delta y_{st} -\int_s^tu_r(y_r)\rmd r-  \sum_{k=1}^K \sigma_k(y_s) \delta z_{st}^k - \sum_{k,l = 1}^K \sigma_l [\sigma_k](y_s) \mathbb{Z}_{st}^{l,k}    
$$
belongs to $C_2^{3 \alpha}([0,T];\R^d)$.
\end{definition}
\begin{remark}
In particular, due to the continuity of $\beta,\sigma, D\sigma$ and the boundedness of $y$, if $y$ solves \eqref{eq:rde}, then $y\in \mathscr{D}^{2 \alpha}_z([0,T];\bbR^d)$ with Gubinelli derivative $y'=\sigma(y)$.
\end{remark}

Sufficient conditions on  $\beta$ and $\sigma$ for the existence and uniqueness of solutions of \eqref{eq:rde} can be found, e.g.,  in \cite[Chapter 8]{FrizHairer}.

Solutions of rough partial differential equations, for example, in the sense of unbounded rough drivers, are controlled paths in function spaces, and upon testing a solution by a sufficiently regular test function, the resulting function is a scalar-valued controlled rough path. See Example \ref{sec:Euler_system}.

\subsection{Fractional Brownian motion}
Given $K\in \bbN$,  an $\bbR^K$-valued fractional Brownian motion (fBm)  with Hurst parameter $H\in (0,1)$  supported on a probability space  $(\Omega,\mathcal{F},\mathbb{P})$ is a centered Gaussian process  $B = (B^1, \dots, B^K): [0,\infty) \times \Omega \rightarrow \R^K$ with covariance satisfying
$$
\mathbb{E}(B_t \otimes B_s)  =  \frac12 ( t^{2H} + s^{2H} - |t-s|^{2H}) I_{K \times K}\,, \;\; (s,t)\in [0,T]^2\,.
$$
If $H = \frac12$, then $B$ is an $\bbR^K$-valued standard Brownian motion. Moreover, we note that
\begin{itemize}
\item $B$ has stationary increments, that is, $B_t - B_s \sim \mathcal{N}(0,|t-s|^{2H} I_{K \times K} )$ and $B_0=0$;
\item for all $c\in \bbR$, $\{ B_{ct} \}_{t \geq 0} \sim \{ c^H B_{t} \}_{t \geq 0}$;
\item if $B^1$ and $B^2$ are scalar-valued independent fBms and $a,b \in \R$, then 
\begin{equation} \label{eq:sum_of_fBm}
\bar{B}_t := (a^2 + b^2)^{- \frac12} (aB_t^1 + b B_t^2)
\end{equation}
is a fBm;
\item for all $u \leq v \leq s \leq t$, 
\begin{equation} \label{eq:cov_neg_pos}
\mathbb{E}(\delta B_{uv} \delta B_{st}) \in 
\left\{
\begin{array}{ll}
(-\infty, 0] & \textrm{ if } H \leq \frac12 \\
\big[0,\infty) & \textrm{ if } H > \frac12 \\
\end{array}
\right. . 
\end{equation}
If $H\leq \frac 12$, then this is proved in \cite[Lemma 10.8]{FrizHairer}. The case $H > \frac12$ can be proved in the same way as \cite[Lemma 10.8]{FrizHairer};
\item for all $s \leq u \leq v \leq t$,
\begin{equation} \label{eq:cov_subset}
\mathbb{E}( \delta B_{uv} \delta B_{st})  \leq 
\left\{
\begin{array}{ll}
|t-s|^{2H} & \textrm{ if } H \leq \frac12 \\
|t-s| & \textrm{ if } H > \frac12 ,\\
\end{array}
\right. .
\end{equation}
If $H\leq \frac 12$, then this is proved in \cite[Lemma 10.8]{FrizHairer}. The case $H > \frac12$ is straightforward; 
\item   $\bbP$-a.s.,  $B\in C^{\alpha}([0,T];\bbR^K)$ for any $\alpha < H$.
\end{itemize}

The following result from \cite[Theorem 10.4, Corollary 10.10 and Example 10.11]{FrizHairer} shows that if $H \in (\frac13, \frac12]$, then $B$ can be lifted to a rough path.

\begin{theorem} \label{thm:fBm_lift}
Let $B$ denote an $\bbR^K$-valued fractional Brownian motion supported on a probability space $(\Omega, \clF, \bbP)$ with Hurst parameter $H\in (\frac13, \frac12]$. Given $T>0$, there exists a measurable mapping $\bbB: \Delta_T\times \Omega \rightarrow \bbR^{K\times K}$ and a null set $N\in \clF$ such that $\bB=( \delta B,\bbB): \Delta_T \times \Omega \rightarrow \bbR^K\times \bbR^{K\times K}$ satisfies
$$
\bB(\omega) = ( \delta B(\omega),\mathbb{B}(\omega)) \in \mathscr{C}^{\alpha}([0,T];\R^K), \;\; \forall \omega\in N^c\,.
$$
for any $\alpha<H$.
Moreover, there exists a random variable $M \in \cap_{p \geq 1} L^p(\Omega; \clF, \bbP)$ such that
$$
|\delta B_{st}| \leq M|t-s|^{\alpha} , \qquad |\mathbb{B}_{st}| \leq M|t-s|^{2\alpha} , \quad \forall (s,t)\in \Delta_T\,.
$$
Lastly,  $\bbB$ is a limit in the rough path topology of $\bbB^n_{st}=\int_s^t \int_s^{t_1} \rmd B^{n}_{r} \rmd B^n_{t_1}$, where $B^n$ is a piecewise linear interpolation of $B$ along a dyadic sequence of partitions of the interval $[0,T]$.
\end{theorem}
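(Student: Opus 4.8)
The plan is to recognize this as (essentially) the rough-path lift of fractional Brownian motion established in \cite{FrizHairer}, and to reconstruct it via the standard Wiener-chaos-plus-Kolmogorov scheme. First I would set up the approximating lifts: since $B^n$ is piecewise linear along the $n$-th dyadic partition of $[0,T]$, it has bounded variation, so $\mathbb{B}^n_{st} := \int_s^t \delta B^n_{sr}\otimes \rmd B^n_r$ is a genuine Riemann--Stieltjes integral, and $\bB^n := (\delta B^n, \mathbb{B}^n)$ is the canonical lift of a smooth path; in particular it satisfies Chen's relation \eqref{eq:chen_relation} for every $n$ by additivity of the integral.

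Next come the moment estimates, which are the analytic core. The increment $\delta B^n_{st}$ lies in the first Wiener chaos generated by $B$ and converges in $L^2$ to $\delta B_{st}$, with $\mathbb{E}|\delta B_{st}|^2 = K|t-s|^{2H}$; the iterated integral $\mathbb{B}^n_{st}$ lies in the second chaos. Using the covariance structure of $B$, together with the sign and domination properties \eqref{eq:cov_neg_pos}--\eqref{eq:cov_subset}, one shows
\[
\sup_n \mathbb{E}|\mathbb{B}^n_{st}|^2 \lesssim |t-s|^{4H}, \qquad \mathbb{E}\big|\mathbb{B}^n_{st}-\mathbb{B}^m_{st}\big|^2 \lesssim 2^{-\theta(n\wedge m)}\,|t-s|^{4H-\epsilon}
\]
for suitable $\theta>0$ and small $\epsilon>0$; here the exponent $4H$ is finite precisely because $H>\tfrac14$, which is implied by the hypothesis $H>\tfrac13$. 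By equivalence of $L^p$-norms on a fixed Wiener chaos (hypercontractivity/Gaussian hypercontractivity), these $L^2$ bounds upgrade to $L^p$ for every $p\geq 1$ with the same scaling in $|t-s|$.

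Then I would invoke the Kolmogorov-type continuity criterion for rough paths, i.e. the Garsia--Rodemich--Rumsey / sewing argument (\cite[Thm.~3.1 and Cor.~3.2]{FrizHairer}). Choosing $p$ large enough that $pH-1>p\alpha$ and $2pH-1>2p\alpha$, the $L^p$ bounds above yield, off a single null set $N\in\clF$, convergence of $\bB^n$ to a limit $\bB=(\delta B,\mathbb{B})$ in the $\alpha$-H\"older rough-path metric for every $\alpha<H$; Chen's relation passes to the limit by continuity. The very same estimates produce a random variable $M$ — an explicit double-integral GRR functional of $\delta B$ and $\mathbb{B}$ — that dominates $[\delta B]_\alpha$ and $\sqrt{[\mathbb{B}]_{2\alpha}}$, and Fubini applied to the $L^p$ bounds gives $\mathbb{E}\,M^q<\infty$ for all $q\geq1$ (in fact $M$ has exponential moments). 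Measurability of $\bbB$ is inherited from it being an a.s.\ pointwise limit of the measurable maps $\mathbb{B}^n$, and the final assertion of the theorem is simply the identification of the constructed limit with the dyadic piecewise-linear approximation scheme used above.

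The main obstacle is the second-chaos covariance estimate: bounding $\mathbb{E}|\mathbb{B}^n_{st}|^2$ and the Cauchy differences uniformly in $n$ with the sharp power $|t-s|^{4H}$ requires a careful decomposition of the fourfold sum against the fBm covariance kernel, controlling the diagonal and off-diagonal contributions separately, and this is exactly where the restriction on $H$ enters. Everything downstream — hypercontractivity and the rough-path Kolmogorov lemma — is routine, and one may alternatively quote \cite[Theorem~10.4, Corollary~10.10, Example~10.11]{FrizHairer} verbatim.
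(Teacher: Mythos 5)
Your proposal is correct and matches the paper's treatment: the paper offers no proof of this statement, simply quoting \cite[Theorem 10.4, Corollary 10.10 and Example 10.11]{FrizHairer}, and your sketch (piecewise-linear lifts satisfying Chen's relation, second-chaos $L^2$ estimates using $H>\tfrac14$, hypercontractivity, and the Kolmogorov criterion for rough paths) is a faithful reconstruction of exactly that cited argument. As you note yourself, one may quote the reference verbatim, which is what the authors do.
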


In particular, Definition \ref{def:rde} defines a pathwise solution of stochastic differential equations driven by fBm. Clearly, there are infinitely many lifts of fBm.  For example, given $f\in C^{2\alpha}([0,T];\bbR^{K\times K})$, $(B,\bbB^f)\in \mathscr{C}^{\alpha}([0,T];\R^K)$, where  $\bbB^f = \bbB + \delta f$ and $\bbB$ is as in Theorem \ref{thm:fBm_lift}. The results below are independent of the choice of the rough path lift.

\section{Scaled quadratic variation}

Let $\Pi= \{\pi_n\}$ denote a sequence of partitions of the interval $[0,T]$ with vanishing mesh. Given $\gamma\in \bbR$, $x,z:[0,T]\rightarrow \bbR$, $d\in \bbN$, $y:[0,T]\rightarrow \bbR^d$, $n\in \bbN$, and $t\in [0,T]$, we define
\begin{align*}
\langle x\rangle_t^{(\gamma),n}&=\sum_{ [t_i,t_{i+1}] \in \pi_n \cap [0,t]} (t_{i+1} - t_i)^{\gamma} (\delta x_{t_i t_{i+1}})^2\,,\qquad 
\langle \langle y\rangle\rangle^{(\gamma),n}_t=\sum_{i=1}^d\langle y^i\rangle_t^{(\gamma),n}\,,\\
\langle x,z\rangle_t^{(\gamma),n}&=\sum_{ [t_i,t_{i+1}] \in \pi_n \cap [0,t]} (t_{i+1} - t_i)^{\gamma} (\delta x_{t_i t_{i+1}}) (\delta z_{t_i t_{i+1}})\,.
\end{align*}
We note that $\langle x\rangle^{(\gamma),n}, \langle \langle  y\rangle \rangle^{(\gamma),n}, \langle x,z\rangle^{(\gamma),n} \in \operatorname{BV}([0,T];\bbR)$ since they are all increasing functions of $t$.

\begin{definition} \label{def:scaled variation}
Let $\Pi= \{\pi_n\}$ denote a sequence of partitions of $[0,T]$ with vanishing mesh. 
A path $x:[0,T] \rightarrow \R$  is said to have finite \emph{$\gamma$-scaled covariation} along $\Pi$ if 
$
\langle x\rangle_t^{(\gamma)} := \lim_{n \rightarrow \infty} \langle x\rangle_t^{(\gamma),n}
$
exists for all $t\in [0,T]$ and $\langle x\rangle^{(\gamma)} \in \textnormal{BV}([0,T];\R)\cap C([0,T];\R)$. 

Given $d\in \bbN$,  we denote by $\mathcal{V}_{\Pi}^{\gamma}([0,T];\bbR^d)$ the set of all paths $x=(x^1,\ldots, x^d):[0,T] \rightarrow \R^d$ such that 
$
\langle x^i,x^j\rangle_t^{(\gamma)} := \lim_{n \rightarrow \infty} \langle x^i,x^j\rangle_t^{(\gamma),n}
$
exists for all $i,j\in \{1,\ldots, d\}$ and $t\in [0,T]$, and $\langle x^i,x
^j\rangle^{(\gamma)} \in \textnormal{BV}([0,T];\R)\cap C([0,T];\R)$, for all $i,j\in \{1,\ldots, d\}$.  If $x\in \mathcal{V}_{\Pi}^{\gamma}([0,T];\bbR^d)$, we write $\langle \langle x\rangle \rangle^{(\gamma)}=\sum_{i=1}^d \langle x^i\rangle^{(\gamma)}$. 
\end{definition}
\begin{remark}
The reason for requiring the covariation to exist for $d$-dimensional paths is because, in general, it is not true that if  $x:[0,T]\rightarrow \mathbb{R}$ and $z:[0,T]\rightarrow \mathbb{R}$ have finite $\gamma$-scaled variation along $\Pi$, then 
$
\langle x,z\rangle_t^{(\gamma)} := \lim_{n \rightarrow \infty} \langle x,z\rangle_t^{(\gamma),n}
$
exists for all for all $t\in [0,T]$, as proved in \cite[Prop. 2.7]{SCHIED2016974}. Note that if $x,z,x+z,x-z$ have finite $\gamma$-scaled variation along $\Pi$, then since the polarization identity holds for each $n$, viz
$$
\langle x, z \rangle_t^{(\gamma),n}=\frac14 \left( \langle x + z  \rangle^{(\gamma),n} - \langle x - z  \rangle^{(\gamma),n} \right)\,,
$$
we have $(x,z)\in \mathcal{V}_{\Pi}^{\gamma}([0,T];\bbR^2)$. 
\end{remark}

\bigskip

In Section \ref{sec:fBm_convergence_rates}, we show that for $\mathbb{P}$-a.e.\, $\omega\in \Omega$, the sample path of an $\bbR^k$-valued fBm satisfies 
$\langle B^k(\omega), B^l(\omega) \rangle^{(1-2H)}_t = \delta_{k=l} t$ for all $t\in [0,T]$ along a family of uniform partitions (which includes $\# \pi_n = n$ and $\# \pi_n = 2^{n}$). 

Previous studies such as \cite{klein1975quadratic} and \cite{liu2020discrete} have reported similar results for a more extensive class of Gaussian processes $z = (z^1, \dots, z^K)$. However, the extent to which these results can be directly applied within our framework remains uncertain, as we shall now see. 

The work \cite{klein1975quadratic} demonstrates that when $z$ is a so-called Gladyschev process, for a fixed $t$, the convergence of $\langle z^k(\omega),z^l(\omega) \rangle^{(\gamma),n}_t$ occurs for almost all $\omega \in \Omega$, including convergence rates. In the particular case of a 1-dimensional fBm, $B$, (which is an example of a Gladyshev process), \cite[Theorem 2]{klein1975quadratic} shows that 
$$
\lim_{n \rightarrow \infty} \langle B \rangle^{(1-2H),n}_t = t, \quad \mathbb{P}-a.s.
$$
Yet, this does not fulfill our Definition \ref{def:scaled variation}, which requires that $\langle z^k(\omega),z^l(\omega) \rangle^{(\gamma),n}_t$ converges \emph{for all} $t\in [0,T]$ and \emph{for all} $\omega$ belonging to some set of full measure.

The work \cite{liu2020discrete} shows that $\langle z^k(\cdot),z^l(\cdot) \rangle^{(\gamma),n}_t$ converges to $t$ in finite-dimensional distributions for the class of Gaussian processes under \cite[Hypothesis 5.1]{liu2020discrete}. However, again, this finding does not directly lead to the convergence for every $t$ as required by Definition \ref{def:scaled variation}.

We begin by collecting some basic properties of the scaled quadratic variation in the form of a proposition.

\begin{proposition}\label{prop:sv_properties}
Let $\Pi= \{\pi_n\}$ denote a sequence of partitions of the interval $[0,T]$ with vanishing mesh.  Then 
\begin{enumerate}[(i)]
\item $x\in \mathcal{V}_{\Pi}^{\gamma}([0,T])$  if and only if the sequence of measures  
\begin{equation*}
\mu_{n} := \sum_{ [t_i,t_{i+1}] \in \pi_n} (t_{i+1} - t_i)^{\gamma}( \delta x_{t_i t_{i+1}})^2 \delta_{t_i}
\end{equation*}
converges weakly to the measure $\mu$ induced by a bounded variation function;
\item if  $x\in \mathcal{V}_{\Pi}^{\gamma}([0,T])$, then $\langle x \rangle^{(\gamma),n}$ converges to $\langle x \rangle^{(\gamma)}$ uniformly in $t$;
\item  if $(x,z) \in \mathcal{V}_{\Pi}^{\gamma}([0,T];\R^2)$, then for all $\phi \in C([0,T])$,
$$
\lim_{n\rightarrow \infty}\int_0^T \phi_r\rmd  \langle x,  z\rangle^{(\gamma),n}_r=\int_0^T \phi_r \rmd \langle x,  z\rangle^{(\gamma)}_r;
$$
\item given a path $z:[0,T]\rightarrow \bbR$, there is only one non-trivial $\gamma$-scaled variation along $\Pi$; that is, if $0 < \langle z \rangle^{(\gamma)}_T< \infty$, then 
$$
\langle z \rangle^{\beta}_T = \left\{
\begin{array}{ll}
0 & \textrm{ if } \beta > \gamma \\
\infty & \textrm{ if } \beta < \gamma. \\
\end{array}
\right.
$$
\end{enumerate}
\end{proposition}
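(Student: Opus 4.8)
The four items are essentially independent and each rests on a classical fact about monotone functions and weak convergence of finite measures on a compact interval; I would carry them out in the order (iv), (ii), (i), (iii).

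I would start with (iv), which is a one-line comparison. Write $m_i^n := (t_{i+1}-t_i)^{\gamma}(\delta z_{t_i t_{i+1}})^2 \ge 0$, so that $\sum_i m_i^n = \langle z\rangle_T^{(\gamma),n} \to \langle z\rangle_T^{(\gamma)}\in(0,\infty)$, while $\langle z\rangle_T^{(\beta),n} = \sum_i (t_{i+1}-t_i)^{\beta-\gamma} m_i^n$. Since $t_{i+1}-t_i \le |\pi_n|$, monotonicity of $s\mapsto s^{\beta-\gamma}$ gives $(t_{i+1}-t_i)^{\beta-\gamma}\le |\pi_n|^{\beta-\gamma}$ when $\beta>\gamma$ and $\ge|\pi_n|^{\beta-\gamma}$ when $\beta<\gamma$, hence $\langle z\rangle_T^{(\beta),n} \le |\pi_n|^{\beta-\gamma}\langle z\rangle_T^{(\gamma),n}$ in the first case and $\ge |\pi_n|^{\beta-\gamma}\langle z\rangle_T^{(\gamma),n}$ in the second. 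Letting $n\to\infty$ and using $|\pi_n|\to 0$ together with the fact that $\langle z\rangle_T^{(\gamma),n}$ is bounded and, for large $n$, bounded away from $0$, yields the limits $0$ and $+\infty$ respectively.

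For (ii): each $t\mapsto \langle x\rangle_t^{(\gamma),n}$ is non-decreasing with $\langle x\rangle_0^{(\gamma),n}=0$, and by hypothesis converges pointwise to the continuous non-decreasing function $\langle x\rangle^{(\gamma)}$. This is exactly the hypothesis of P\'olya's theorem; alternatively one runs the short Dini argument: by uniform continuity pick $0=s_0<\dots<s_m=T$ with $\langle x\rangle_{s_{j+1}}^{(\gamma)}-\langle x\rangle_{s_j}^{(\gamma)}<\varepsilon$, and for $t\in[s_j,s_{j+1}]$ sandwich $\langle x\rangle_{s_j}^{(\gamma),n}\le\langle x\rangle_t^{(\gamma),n}\le\langle x\rangle_{s_{j+1}}^{(\gamma),n}$, so that pointwise convergence at the finitely many $s_j$ forces $|\langle x\rangle_t^{(\gamma),n}-\langle x\rangle_t^{(\gamma)}|<2\varepsilon$ for all $t$ once $n$ is large. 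For (i): observe that $\langle x\rangle_t^{(\gamma),n}=\tilde\mu_n([0,t])$ is the distribution function of $\tilde\mu_n:=\sum (t_{i+1}-t_i)^{\gamma}(\delta x_{t_i t_{i+1}})^2\,\delta_{t_{i+1}}$, which has the same total mass as $\mu_n$ and the same weak limits, since moving each atom by at most $|\pi_n|$ changes $\int\phi\,d(\cdot)$ by at most $\langle x\rangle_T^{(\gamma),n}\,\omega_\phi(|\pi_n|)\to 0$. Then invoke the Helly--Bray correspondence: on $[0,T]$, a sequence of finite Borel measures converges weakly to a finite measure $\mu$ iff the distribution functions converge at every continuity point of $\mu$'s distribution function, and the convergence holds at \emph{every} $t$ precisely when that limit is continuous. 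Matching this against Definition \ref{def:scaled variation}, which demands $\langle x\rangle^{(\gamma)}$ exist for all $t$ and lie in $\mathrm{BV}\cap C$ (it is automatically non-decreasing), gives the equivalence, with $\mu$ the measure induced by $\langle x\rangle^{(\gamma)}$.

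For (iii): bilinearity of $\langle\cdot,\cdot\rangle^{(\gamma),n}$ at fixed $n$ shows $\langle x\pm z\rangle^{(\gamma),n}\to \langle x\rangle^{(\gamma)}\pm 2\langle x,z\rangle^{(\gamma)}+\langle z\rangle^{(\gamma)}$ pointwise, so $x\pm z\in\mathcal{V}_{\Pi}^{\gamma}([0,T])$, and (ii) upgrades both to uniform convergence; by polarization $\langle x,z\rangle^{(\gamma),n}\to\langle x,z\rangle^{(\gamma)}$ uniformly, and $\langle x,z\rangle^{(\gamma),n}$ is of bounded variation with total variation $\le\tfrac14\big(\langle x+z\rangle_T^{(\gamma),n}+\langle x-z\rangle_T^{(\gamma),n}\big)\le C$ uniformly in $n$. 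Then, given $\phi\in C([0,T])$ and $\varepsilon>0$, choose a step function $\psi=\sum_j c_j\mathbf{1}_{(a_j,b_j]}$ with $\|\phi-\psi\|_\infty<\varepsilon$: the integral $\int_0^T\psi\,d\langle x,z\rangle^{(\gamma),n}$ is a finite linear combination of values of $\langle x,z\rangle^{(\gamma),n}$, hence converges to $\int_0^T\psi\,d\langle x,z\rangle^{(\gamma)}$, while the $\phi-\psi$ error is at most $C\varepsilon$ for every $n$ and also at most $C\varepsilon$ for the limit; let $n\to\infty$ and then $\varepsilon\to 0$. The only genuinely fiddly point is (i): reconciling the combinatorial sum $\langle x\rangle_t^{(\gamma),n}$ — whose index set $\pi_n\cap[0,t]$ need not be a partition of $[0,t]$ — with an honest distribution function, keeping track of the left-versus-right endpoint placement of the atoms, and lining up the hypothesis ``induced by a bounded variation function'' with the continuity built into Definition \ref{def:scaled variation}.
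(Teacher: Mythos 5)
Your proposal is correct and follows essentially the same route as the paper: identify $\langle x\rangle^{(\gamma),n}_\cdot$ with the distribution function of the atomic measure, invoke the equivalence between weak convergence and uniform convergence of distribution functions with continuous limit (P\'olya/Helly--Bray) for (i)--(iii), and use the elementary $|\pi_n|^{\beta-\gamma}$ comparison for (iv). You are in fact slightly more careful than the paper on two points it glosses over --- the left-versus-right placement of the atoms of $\mu_n$ and the explicit step-function approximation in (iii) --- but these are refinements of the same argument, not a different one.
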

\begin{proof}
(i-iii) We first note that 
$\langle x\rangle_t^{(\gamma),n} \in \textnormal{BV}([0,T])$
is the cumulative distribution function of $\mu_n$. Then we observe that weak convergence of probability measures is equivalent to the uniform convergence of cumulative distribution functions. iv)  If $\beta > \gamma$, then 
\begin{equation} \label{non-trivial variation}
\sum_{ [t_i,t_{i+1}] \in \pi_n}  (t_{i+1} - t_i)^{\beta}  (\delta z_{t_i t_{i+1}})^2 \leq |\pi_n|^{\beta - \gamma}\sum_{ [t_i,t_{i+1}] \in \pi_n} (t_{i+1} - t_i)^{\gamma}  (\delta z_{t_i t_{i+1}})^2 \rightarrow 0
\end{equation}
as $n\rightarrow \infty$. If $\beta < \gamma$, then assuming that $\langle z \rangle^{\beta}_T < \infty$,  we get a contradiction to the assumption that $\langle z \rangle^{(\gamma)}_T>0$ by the same argument as in \eqref{non-trivial variation} with $\gamma$ and $\beta$  interchanged. 
\end{proof}

\subsection{Scaled quadratic variation of controlled paths}

The aim of this section is to show that the properties of the classical quadratic variation from It\^{o}-theory generalizes to scaled variation. In particular, we show that finite $\gamma$-scaled quadratic variation is inherited by paths controlled by paths with finite $\gamma$-scaled covariation. Moreover, we establish a bound that relates the convergence rates of the scaled quadratic variation of a controlled path to convergence rates of the controlling paths covariation in the uniform norm. As a corollary, we establish a  formula for the scaled quadratic variation of solutions of rough differential equations.

We will first prove an auxiliary lemma that relates the convergence rates of the integral of $\phi \in C^{\alpha}([0,T])$ with respect to $\langle x,z\rangle^{(\gamma),n}$ to the convergence rates of $\langle x,z\rangle^{(\gamma),n}$ in the uniform norm. To prove this lemma, we require the following special case of the $(p,q)$-variation Young's sewing lemma \cite[Proposition 2.4]{friz2018differential}. The lemma is a consequence of \cite[Proposition 2.4]{friz2018differential} and a basic interpolation estimate.
\begin{lemma}\label{lem:sewing}
 Let $\phi\in C^{\alpha}([0,T];\bbR)$ and $V\in \textnormal{BV}([0,T];\bbR)$. Then for all $\alpha,\beta> 0$ such that $\alpha+\beta>1 $, there is a constant $C_{\alpha,\beta}>0$ such that 
$$
\left|\int_0^T \phi_r \rmd V_r - \phi_0 \delta V_{0T}\right| \le C_{\alpha,\beta} [\phi]_{\alpha}[V]_{\textnormal{BV}}^{\beta}\|V\|_{\infty}^{1-\beta}\,.
$$
\end{lemma}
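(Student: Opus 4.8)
My plan is to combine Young's integration bound in $(p,q)$-variation form, \cite[Proposition 2.4]{friz2018differential}, with an elementary interpolation estimate for $\textnormal{BV}$ functions. The first step is to fix the exponents: set $p := 1/\alpha$ and $q := 1/\beta$, so that — using $\alpha \le 1$ (as $\phi$ is $\alpha$-H\"older) and restricting to the relevant range $\beta \le 1$ — we have $p, q \in [1,\infty)$ and $1/p + 1/q = \alpha + \beta > 1$. Next I would record the interpolation inequality: every $V \in \textnormal{BV}([0,T];\bbR)$ has finite $q$-variation, with
$$
\|V\|_{q\textnormal{-var};[0,T]} \le \big(2\|V\|_{\infty}\big)^{1-1/q}\,[V]_{\textnormal{BV}}^{1/q} = \big(2\|V\|_{\infty}\big)^{1-\beta}\,[V]_{\textnormal{BV}}^{\beta}\,.
$$
Its proof is a single line: for any partition $\pi \in \clD([0,T])$,
$$
\sum_{[t_i,t_{i+1}]\in \pi} |\delta V_{t_i t_{i+1}}|^{q} \le \Big(\sup_{s,t\in[0,T]} |\delta V_{st}|\Big)^{q-1} \sum_{[t_i,t_{i+1}]\in \pi} |\delta V_{t_i t_{i+1}}| \le \big(2\|V\|_{\infty}\big)^{q-1}\,[V]_{\textnormal{BV}}\,,
$$
and one takes the supremum over $\pi$ followed by the $1/q$-th root.

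The second step uses the fact that $\phi \in C^{\alpha}([0,T])$ has finite $p$-variation with $\|\phi\|_{p\textnormal{-var};[0,T]} \le [\phi]_{\alpha}\,T^{\alpha}$ — indeed $\sum_i |\delta\phi_{t_i t_{i+1}}|^{1/\alpha} \le [\phi]_{\alpha}^{1/\alpha}\sum_i (t_{i+1}-t_i) = [\phi]_{\alpha}^{1/\alpha}\,T$. Since $\phi$ has finite $p$-variation, $V$ has finite $q$-variation, $1/p + 1/q > 1$, and $\phi$ is continuous (so that $\phi$ and $V$ share no discontinuity), \cite[Proposition 2.4]{friz2018differential} gives the existence of the Young integral together with
$$
\Big| \int_0^T \phi_r \,\rmd V_r - \phi_0 \,\delta V_{0T} \Big| \le C_{p,q}\,\|\phi\|_{p\textnormal{-var};[0,T]}\,\|V\|_{q\textnormal{-var};[0,T]}\,.
$$
Inserting the two bounds above and absorbing the factors $T^{\alpha}$ and $2^{1-\beta}$ into the constant (recall that $T$ is fixed throughout the paper) yields $C_{\alpha,\beta}\,[\phi]_{\alpha}\,[V]_{\textnormal{BV}}^{\beta}\,\|V\|_{\infty}^{1-\beta}$, which is the asserted bound.

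I do not expect a genuine obstacle: this is a bookkeeping lemma, and the work is in identifying the conjugate exponents and invoking the right sewing/integration estimate. The points that deserve a little care are: (i) checking the hypotheses of \cite[Proposition 2.4]{friz2018differential}, in particular that the Young integral against a possibly \emph{discontinuous} integrator $V$ — such as the step functions $\langle x,z\rangle^{(\gamma),n}$ to which the lemma will be applied — is well defined, which is precisely where continuity of $\phi$ is used; (ii) the interval-length factor $T^{\alpha}$, which appears because a \emph{local} seminorm $[\phi]_{\alpha}$ is being paired against the \emph{global} variation seminorm of $V$, and which must be absorbed into $C_{\alpha,\beta}$; and (iii) noting that the estimate, and the interpolation inequality underpinning it, are meaningful exactly for $\beta \in (1-\alpha,1]$, which is the range in which the statement should be understood.
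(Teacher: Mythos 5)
Your proposal is correct and follows exactly the route the paper indicates: it sketches no proof beyond saying the lemma ``is a consequence of [Proposition 2.4, friz2018differential] and a basic interpolation estimate,'' and your argument supplies precisely those two ingredients (the Young--Lo\`eve bound with $p=1/\alpha$, $q=1/\beta$, and the interpolation $\|V\|_{q\textnormal{-var}}\le (2\|V\|_{\infty})^{1-\beta}[V]_{\textnormal{BV}}^{\beta}$). Your side remarks about discontinuous integrators and the restriction $\beta\le 1$ are apt, since the lemma is applied to the step functions $\langle x,z\rangle^{(\gamma),n}$ with $\beta=1-\alpha+\epsilon$.
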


\begin{lemma}\label{lem:sewing_bound_basic}
Let $\Pi=\{\pi_n\}$ denote a sequence of partitions of  $[0,T]$ with vanishing mesh. Let $(x,z) \in \mathcal{V}_{\Pi}^{\gamma}([0,T];\bbR^2)$. Then, for all $\alpha \in (0,1]$, $\phi \in C^{\alpha}([0,T])$ and $\epsilon > 0$,
$$
\left|\int_0^T \phi_r \rmd \langle x,z \rangle^{(\gamma),n}_r - \int_0^T \phi_r \rmd \langle x,z \rangle^{(\gamma)}_r \right| =\mathcal{O}_C\left(\|\langle x ,z\rangle^{(\gamma),n} - \langle x ,z\rangle^{(\gamma)}\|_{\infty}^{\alpha-\epsilon} \right)\,,
$$
where $C=C(\alpha, \|\phi\|_{C^\alpha}, \langle x\rangle_{T}^{(\gamma)}, \langle z\rangle_{T}^{(\gamma)})$.
\end{lemma}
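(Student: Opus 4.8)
The strategy is to write the difference of integrals as a single integral against the signed bounded-variation measure $V^n := \langle x,z\rangle^{(\gamma),n} - \langle x,z\rangle^{(\gamma)}$ and then apply the interpolation-flavored sewing estimate of Lemma \ref{lem:sewing}. First I would note that $V^n \in \textnormal{BV}([0,T];\bbR)$, since both $\langle x,z\rangle^{(\gamma),n}$ and $\langle x,z\rangle^{(\gamma)}$ are of bounded variation (the former being a finite sum of increments with fixed sign structure coming from the polarization of the two increasing functions $\langle x+z\rangle^{(\gamma),n}$, $\langle x-z\rangle^{(\gamma),n}$, and the latter by hypothesis). By linearity of the integral, $\int_0^T \phi_r\,\rmd\langle x,z\rangle^{(\gamma),n}_r - \int_0^T \phi_r\,\rmd\langle x,z\rangle^{(\gamma)}_r = \int_0^T \phi_r\,\rmd V^n_r$. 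Since $V^n_0 = 0$ and $V^n_T = \delta V^n_{0T}$ also tends to $0$, applying Lemma \ref{lem:sewing} with exponents $\alpha$ and $\beta$ (for any $\beta \in (0,1)$ with $\alpha + \beta > 1$, which forces $\beta$ close to $1$ when $\alpha$ is small — hence the loss of $\epsilon$) gives
\[
\left|\int_0^T \phi_r\,\rmd V^n_r - \phi_0\,\delta V^n_{0T}\right| \le C_{\alpha,\beta}\,[\phi]_\alpha\,[V^n]_{\textnormal{BV}}^{\beta}\,\|V^n\|_\infty^{1-\beta}.
\]

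The term $\phi_0\,\delta V^n_{0T} = \phi_0 (V^n_T - V^n_0) = \phi_0 V^n_T$ is trivially bounded by $\|\phi\|_\infty \|V^n\|_\infty$, so it is $\mathcal{O}(\|V^n\|_\infty)$ and absorbed into the claimed rate since $\alpha - \epsilon \le 1$. The main point is therefore to control $[V^n]_{\textnormal{BV}}^\beta$ by a constant independent of $n$. Here I would use that $[V^n]_{\textnormal{BV}} \le [\langle x,z\rangle^{(\gamma),n}]_{\textnormal{BV}} + [\langle x,z\rangle^{(\gamma)}]_{\textnormal{BV}}$. For the $n$-level term, write $\langle x,z\rangle^{(\gamma),n} = \tfrac14(\langle x+z\rangle^{(\gamma),n} - \langle x-z\rangle^{(\gamma),n})$; each of these is increasing, so its BV norm equals its total increment over $[0,T]$, namely $\langle x\pm z\rangle^{(\gamma),n}_T$. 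These converge to $\langle x\pm z\rangle^{(\gamma)}_T$, hence are uniformly bounded in $n$ by a constant depending only on $\langle x\rangle^{(\gamma)}_T$ and $\langle z\rangle^{(\gamma)}_T$ (again via polarization at the level of the limits, together with $2(a^2+b^2) = (a+b)^2 + (a-b)^2$ applied to increments). Similarly $[\langle x,z\rangle^{(\gamma)}]_{\textnormal{BV}}$ is finite and bounded by the same data. Thus $[V^n]_{\textnormal{BV}} \le C(\langle x\rangle^{(\gamma)}_T, \langle z\rangle^{(\gamma)}_T)$ uniformly in $n$, and raising to the power $\beta \le 1$ only improves matters.

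Combining, $\left|\int_0^T \phi_r\,\rmd V^n_r\right| \le \|\phi\|_\infty\|V^n\|_\infty + C_{\alpha,\beta}[\phi]_\alpha\,C(\langle x\rangle^{(\gamma)}_T,\langle z\rangle^{(\gamma)}_T)^\beta\,\|V^n\|_\infty^{1-\beta}$. Given $\epsilon > 0$, choose $\beta = 1 - \alpha + \epsilon$ (shrinking $\epsilon$ if needed so that $\beta \in (0,1)$ and $\alpha + \beta > 1$); then $1 - \beta = \alpha - \epsilon$, and since $\|V^n\|_\infty \to 0$ the linear term $\|V^n\|_\infty$ is of smaller order than $\|V^n\|_\infty^{\alpha - \epsilon}$, so the whole expression is $\mathcal{O}_C(\|V^n\|_\infty^{\alpha-\epsilon})$ with $C = C(\alpha, \|\phi\|_{C^\alpha}, \langle x\rangle^{(\gamma)}_T, \langle z\rangle^{(\gamma)}_T)$, as claimed. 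The only genuinely delicate point is the uniform-in-$n$ BV bound, and the one subtlety there — that the signed measures $\langle x,z\rangle^{(\gamma),n}$ need not be uniformly bounded in total variation a priori — is resolved precisely by routing through the polarization identity into the two monotone quantities, whose total masses are the convergent sequences $\langle x\pm z\rangle^{(\gamma),n}_T$.
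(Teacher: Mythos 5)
Your proposal is correct and follows essentially the same route as the paper: the same splitting off of the boundary term $\phi_0\,\delta V^n_{0T}$, the same application of Lemma \ref{lem:sewing} with $\beta = 1-\alpha+\epsilon$, and a uniform-in-$n$ bound on $[V^n]_{\textnormal{BV}}$ by $\tfrac12\bigl(\langle x\rangle^{(\gamma)}_T+\langle z\rangle^{(\gamma)}_T\bigr)$ up to a constant. The only cosmetic difference is that you obtain that BV bound by polarizing into the two monotone quantities $\langle x\pm z\rangle^{(\gamma),n}$ and using the parallelogram law, whereas the paper applies Young's inequality $|ab|\le\tfrac12(a^2+b^2)$ directly to the increments; these are the same computation.
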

\begin{proof}
Let  $\xi^n = \langle x, z\rangle^{(\gamma),n} - \langle x, z\rangle^{(\gamma)}$. Adding and subtracting, we find
$$
\left| \int_0^T \phi_r \rmd \xi^n_r \right| \leq \left| \int_0^T \phi_r \rmd \xi^n_r  - \phi_0 \xi_{0T}^n \right| + |\phi_0 \xi_{0T}^n| .
$$
The latter term is clearly bounded by $2\|\phi\|_{\infty} \|\xi^n\|_{\infty}$. In order to bound the first term, we use Lemma \ref{lem:sewing} to get the existence of a constant $C_{\alpha,\epsilon}$ such that 
$$
\left| \int_0^T \phi_r \rmd \xi^n_r  - \phi_0 \xi_{0T}^n \right| \leq [\phi]_{\alpha} [\xi^n]_{\text{BV}}^{1 - \alpha + \epsilon} \|\xi^n\|_{\infty}^{\alpha - \epsilon}.
$$
By virtue of Young's inequality and the fact that $t\mapsto \langle x\rangle^{(\gamma), n}_t$ and $t\mapsto \langle z\rangle^{(\gamma), n}_t$ are increasing in $t$ and for all $(s,t)\in \Delta_T$,
$$
\|\langle x, z\rangle^{(\gamma), n}\|_{\textnormal{BV},[s,t]} \le \frac{1}{2}\left(\delta \langle x\rangle^{(\gamma), n}_{st} +\delta \langle z\rangle^{(\gamma), n}_{st} \right)
$$
and
$$
\|\langle x, z\rangle^{(\gamma)}\|_{\textnormal{BV},[s,t]} \le \frac{1}{2}\left(\delta \langle x\rangle^{(\gamma)}_{st} +\delta \langle z\rangle^{(\gamma)}_{st} \right)\,.
$$
Thus, 
$$
[\xi^n]_{\text{BV}} \le \frac{1}{2}\left(\langle x \rangle^{(\gamma),n}_T +\langle z \rangle^{(\gamma),n}_T+ \langle x\rangle^{(\gamma)}_T +\langle z\rangle^{(\gamma)}_T\right)\,.
$$
Since $\langle x\rangle^{(\gamma),n}_{T}$ and $\langle z\rangle^{(\gamma),n}_{T}$ converge to $\langle x\rangle^{(\gamma)}_{T}$ and $\langle z\rangle^{(\gamma)}_{T}$, respectively, there is a constant $C>0$ independent of $n$ such that 
$$
[\xi^n]_{\text{BV}}\le C\left(1 + \langle x\rangle^{(\gamma)}_T +\langle z\rangle^{(\gamma)}_T\right)\,.
$$
The claim follows from Proposition \ref{prop:sv_properties} (iv).
\end{proof}

The following result establishes that controlled paths inherit $\gamma$-scaled variation.

\begin{theorem} \label{thm:control_consistency}
Let $\Pi=\{\pi_n\}$ denote a sequence of partitions of  $[0,T]$ with vanishing mesh.
Given $\alpha \in (\frac13,1]$ and $\gamma>0$ satisfying $\gamma+3\alpha >1$, let $z\in C^{\alpha}([0,T];\bbR^K)\cap \mathcal{V}^{\gamma}_{\Pi}([0,T];\bbR^K)$. If $y \in \mathscr{D}^{2 \alpha}_z([0,T])$, then $y\in \mathcal{V}^{\gamma}_{\Pi}([0,T];\bbR)$ and for all $t\in [0,T]$,
\begin{equation}\label{eq:sv_control_paths}
\langle y\rangle^{(\gamma)}_t = \sum_{k,l=1}^K \int_0^t y_s^k y_s^l \rmd\langle z^k, z^l \rangle^{(\gamma)}_r\,,\end{equation}
where $y'=(y^1,\ldots, y^K)$ is the Gubinelli derivative of $y$.
Moreover, for all $t\in [0,T]$ and  $\epsilon>0$,
\begin{gather*}
\langle y\rangle^{(\gamma),n}_t - \sum_{k,l=1}^K \int_0^t y_r^k y_r^l \rmd\langle z^k, z^l \rangle^{(\gamma)}_r = \mathcal{O}_C\left(|\pi_n|^{\gamma + 3\alpha-1} + \max_{1\le k,l\le K}\|\langle z^k, z^l\rangle^{(\gamma),n} - \langle z^k,z^l\rangle^{(\gamma)}\|_{\infty, [0,t]}^{\alpha-\epsilon} \right)\,,
\end{gather*}
where $C=C(\alpha,K, T, \|z\|_{\alpha},\langle \langle z \rangle \rangle_T^{(\gamma)} ,\|y'\|_{\alpha}, [y^{\sharp}]_{2\alpha})$.
\end{theorem}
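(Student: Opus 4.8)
The plan is to expand the squared increments of $y$ via the controlled‑path decomposition and reduce everything to the covariations of the controlling path $z$ together with the estimate of Lemma~\ref{lem:sewing_bound_basic}. Write $y'=((y')^1,\dots,(y')^K)$ for the Gubinelli derivative and $y^{\sharp}$ for the remainder, so that $\delta y_{t_i t_{i+1}}=\sum_{k=1}^K (y')^k_{t_i}\,\delta z^k_{t_i t_{i+1}}+y^{\sharp}_{t_i t_{i+1}}$. Squaring, multiplying by $(t_{i+1}-t_i)^{\gamma}$, and summing over $[t_i,t_{i+1}]\in\pi_n\cap[0,t]$ gives
\[
\langle y\rangle^{(\gamma),n}_t=A_n(t)+B_n(t)+C_n(t),
\]
where $A_n(t)=\sum_{k,l=1}^K\sum_i (y')^k_{t_i}(y')^l_{t_i}(t_{i+1}-t_i)^{\gamma}\delta z^k_{t_i t_{i+1}}\delta z^l_{t_i t_{i+1}}$ is the ``main term'', and the remainders are $B_n(t)=2\sum_{k=1}^K\sum_i (y')^k_{t_i}(t_{i+1}-t_i)^{\gamma}\delta z^k_{t_i t_{i+1}}\,y^{\sharp}_{t_i t_{i+1}}$ and $C_n(t)=\sum_i (t_{i+1}-t_i)^{\gamma}(y^{\sharp}_{t_i t_{i+1}})^2$.

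For the remainders I would use only the H\"older bounds $|\delta z^k_{st}|\le[z]_{\alpha}|t-s|^{\alpha}$, $|y^{\sharp}_{st}|\le[y^{\sharp}]_{2\alpha}|t-s|^{2\alpha}$, together with $\sum_i(t_{i+1}-t_i)\le T$: this bounds $|B_n(t)|$ by a constant times $\sum_i(t_{i+1}-t_i)^{\gamma+3\alpha}\le T|\pi_n|^{\gamma+3\alpha-1}$ and $|C_n(t)|$ by a constant times $\sum_i(t_{i+1}-t_i)^{\gamma+4\alpha}\le T^{1+\alpha}|\pi_n|^{\gamma+3\alpha-1}$, the constants depending only on $K,T,\|z\|_{\alpha},\|y'\|_{\alpha},[y^{\sharp}]_{2\alpha}$. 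The hypothesis $\gamma+3\alpha>1$ is exactly what makes $(t_{i+1}-t_i)^{\gamma+3\alpha}\le|\pi_n|^{\gamma+3\alpha-1}(t_{i+1}-t_i)$ an honest estimate, so $B_n,C_n=\mathcal{O}_C(|\pi_n|^{\gamma+3\alpha-1})$.

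For the main term, set $\Phi^{kl}_r:=(y')^k_r(y')^l_r$, which lies in $C^{\alpha}([0,T])$ with $\|\Phi^{kl}\|_{C^{\alpha}}$ controlled by $\|y'\|_{\alpha}$. Since $\delta\langle z^k,z^l\rangle^{(\gamma),n}_{t_i t_{i+1}}=(t_{i+1}-t_i)^{\gamma}\delta z^k_{t_i t_{i+1}}\delta z^l_{t_i t_{i+1}}$, the inner sum of $A_n(t)$ is the left–endpoint Riemann--Stieltjes sum $\sum_i\Phi^{kl}_{t_i}\,\delta\langle z^k,z^l\rangle^{(\gamma),n}_{t_i t_{i+1}}$, and I would split
\[
\sum_i\Phi^{kl}_{t_i}\,\delta\langle z^k,z^l\rangle^{(\gamma),n}_{t_i t_{i+1}}=\int_0^t\Phi^{kl}_r\,\rmd\langle z^k,z^l\rangle^{(\gamma),n}_r+\sum_i\big(\Phi^{kl}_{t_i}-\Phi^{kl}_{t_{i+1}}\big)\,\delta\langle z^k,z^l\rangle^{(\gamma),n}_{t_i t_{i+1}}.
\]
The last sum is bounded by $[\Phi^{kl}]_{\alpha}[z]_{\alpha}^2\sum_i(t_{i+1}-t_i)^{\gamma+3\alpha}=\mathcal{O}_C(|\pi_n|^{\gamma+3\alpha-1})$ just as above, while the genuine integral is handled by Lemma~\ref{lem:sewing_bound_basic} applied on $[0,t]$ (after a routine localization, legitimate since $(z^k,z^l)\in\mathcal{V}_{\Pi}^{\gamma}$), with $\phi=\Phi^{kl}$, $x=z^k$, $z=z^l$, and arbitrary $\epsilon>0$; summing the $K^2$ pairs $(k,l)$ gives
\[
A_n(t)-\sum_{k,l=1}^K\int_0^t (y')^k_r(y')^l_r\,\rmd\langle z^k,z^l\rangle^{(\gamma)}_r=\mathcal{O}_C\Big(|\pi_n|^{\gamma+3\alpha-1}+\max_{k,l}\|\langle z^k,z^l\rangle^{(\gamma),n}-\langle z^k,z^l\rangle^{(\gamma)}\|_{\infty,[0,t]}^{\alpha-\epsilon}\Big),
\]
with $C$ depending on $\alpha,K,\|y'\|_{\alpha}$ and $\langle\langle z\rangle\rangle^{(\gamma)}_T\ge\langle z^k\rangle^{(\gamma)}_T$. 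Adding the bounds on $B_n,C_n$ yields the quantitative estimate of the theorem.

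Finally, both error terms vanish as $n\to\infty$: the first because $\gamma+3\alpha-1>0$, the second because $z\in\mathcal{V}_{\Pi}^{\gamma}([0,T];\bbR^K)$ forces $\langle z^k,z^l\rangle^{(\gamma),n}\to\langle z^k,z^l\rangle^{(\gamma)}$, uniformly by Proposition~\ref{prop:sv_properties}(ii) together with polarization. Hence $\langle y\rangle^{(\gamma),n}_t$ converges, for every $t\in[0,T]$, to $\sum_{k,l}\int_0^t(y')^k_r(y')^l_r\,\rmd\langle z^k,z^l\rangle^{(\gamma)}_r$; this limit has bounded variation (a finite sum of Stieltjes integrals against BV integrators) and is continuous (the integrators $\langle z^k,z^l\rangle^{(\gamma)}$ are continuous, so their Stieltjes measures are atomless), so by Definition~\ref{def:scaled variation} we get $y\in\mathcal{V}_{\Pi}^{\gamma}([0,T];\bbR)$ and \eqref{eq:sv_control_paths}. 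There is no deep obstacle once Lemma~\ref{lem:sewing_bound_basic} is available; the real work is the bookkeeping that forces the cross term $B_n$, the remainder $C_n$, and the Riemann‑sum/step‑integral discrepancy all to be $\mathcal{O}(|\pi_n|^{\gamma+3\alpha-1})$ — which is precisely why $\gamma+3\alpha>1$ is imposed — together with the minor technical point of localizing Lemma~\ref{lem:sewing_bound_basic} from $[0,T]$ to an arbitrary $[0,t]$.
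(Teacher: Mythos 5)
Your proof is correct and follows essentially the same route as the paper: the same decomposition of $\langle y\rangle^{(\gamma),n}_t$ into the main term plus the cross term and the squared-remainder term, the same H\"older bounds giving $\mathcal{O}(|\pi_n|^{\gamma+3\alpha-1})$ for the latter two, and the same appeal to Lemma~\ref{lem:sewing_bound_basic} for the main term, with the identification $y\in\mathcal{V}^{\gamma}_{\Pi}$ then following from the convergence and Proposition~\ref{prop:sv_properties}. If anything you are slightly more careful than the paper in explicitly bounding the discrepancy between the left-endpoint Riemann sum and the Stieltjes integral against the discrete measure $\rmd\langle z^k,z^l\rangle^{(\gamma),n}$, a point the paper's proof passes over silently.
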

\begin{proof}
We will only prove convergence rates since \eqref{eq:sv_control_paths} will then follow by Proposition \ref{prop:sv_properties} (iii).  Expanding the square, we find that for all $n\in \bbN$,
\begin{align*}
\langle y\rangle^{(\gamma),n}_t&= \sum_{[t_i, t_{i+1}] \in \pi_n\cap [0,t]} (t_{i+1} - t_i)^{\gamma} (\delta y_{t_i t_{i+1}})^2 \\
&= \sum_{[t_i, t_{i+1}] \in  \pi_n\cap [0,t]} (t_{i+1} - t_i)^{\gamma} \left(\sum_{k=1}^K y_{t_i}^k \delta z^k_{t_{i} t_{i+1}}+ y_{t_i t_{i+1}}^{\sharp} \right)^2 \\
&= \sum_{k,l=1}^K \sum_{[t_i, t_{i+1}] \in \pi_n} (t_{i+1} - t_i)^{\gamma}  y^k_{t_i}y^l_{t_i}  \delta z^k_{t_{i} t_{i+1}} \delta z^l_{t_{i} t_{i+1}}(:=I_1^n)\\
&\qquad + 2\sum_{k=1}^K \sum_{[t_i, t_{i+1}] \in  \pi_n\cap [0,t]} (t_{i+1} - t_i)^{\gamma}  y_{t_i t_{i+1}}^{\sharp} y^k_{t_i} \delta z^k_{t_{i} t_{i+1}}(:=I_2^n) \\
& \qquad + \sum_{[t_i, t_{i+1}] \in \pi_n\cap [0,t]} (t_{i+1} - t_i)^{\gamma} (y_{t_i t_{i+1}}^{\sharp})^2(:=I_3^n)\,.
\end{align*}
We immediately get
\begin{align*}
I_2^n &\le 2\| y'\|_{\infty} [y^{\sharp}]_{2 \alpha} [z]_{\alpha} \sum_{[t_i, t_{i+1}] \in  \pi_n\cap [0,t]} (t_{i+1} - t_i)^{\gamma + 3 \alpha }  \leq 2\| y'\|_{\infty} [y^{\sharp}]_{2 \alpha} [z]_{\alpha} |\pi_n|^{\gamma + 3 \alpha -1} T  \,,\\
I_3^n&\le [y^{\sharp}]_{2 \alpha}^2  \sum_{[t_i, t_{i+1}] \in  \pi_n\cap [0,t]} (t_{i+1} - t_i)^{\gamma + 4 \alpha} \leq [y^{\sharp}]_{2 \alpha}^2 |\pi_n|^{\gamma + 4 \alpha -1} T \,.
\end{align*}
By Lemma \ref{lem:sewing_bound_basic},
$$
I_1 -  \sum_{k,l=1}^K \int_0^t y_r^k y_r^l \rmd\langle z^k, z^l \rangle^{(\gamma)}_r =
\mathcal{O}_{C}\left(\max_{1\le k,l\le K}\|\langle z^k ,z^l\rangle^{(\gamma),n} - \langle z^k,z^l\rangle^{(\gamma)}\|_{\infty}^{\alpha-\epsilon}\right)\,,
$$
where $C=C(\alpha, K, \|y'\|_{\alpha}, \langle \langle z\rangle \rangle_T^{(\gamma)})$. This completes the proof.
\end{proof}

\begin{remark}
Note that \eqref{eq:sv_control_paths} holds even if the Gubinelli derivative is not unique.
\end{remark}

Combining Theorem \ref{thm:control_consistency}, Definition \ref{def:rde},  and Proposition \ref{prop:chain_rule}, we obtain the following corollary, which establishes a formula for the $\gamma$-scaled variation of solutions of rough differential equations.

\begin{corollary}\label{cor:rde_consistency}
Let $\Pi=\{\pi_n\}$ denote a sequence of partitions of  $[0,T]$ with vanishing mesh.
Given $\alpha \in (\frac13,1]$ and $\gamma>0$ satisfying $\gamma+3\alpha >1$, let $\bZ\in \mathscr{C}^{\alpha}([0,T];\bbR^K)\cap \mathcal{V}^{\gamma}_{\Pi}([0,T];\bbR^K)$\footnote{If $\alpha \in (\frac13,\frac12]$, we mean, by abuse of notation, that $\bZ = (z,\mathbb{Z}) \in \mathscr{C}^{\alpha}([0,T];\bbR^K)$ where $z \in \mathcal{V}^{\gamma}_{\Pi}([0,T];\bbR^K)$}.  Let $y$ be a solution of 
$$
\rmd y_t = u_t(y_t)\rmd t + \sum_{k=1}^K \sigma_k(y_t) \rmd \bZ^k
$$
as in Definition \ref{def:rde}. Then for all $f \in C^2(\bbR^d)$ and $t\in [0,T]$,
$$
\langle f(y)\rangle_t^{(\gamma)} = \sum_{k,l=1}^K \int_0^t \sigma_k[f] (y_r)\sigma_l[f] (y_r)  \rmd \langle z^k, z^l \rangle^{(\gamma)}_r \,.
$$
Moreover,  for all $t\in [0,T]$ and $\epsilon>0$, 
\begin{gather*}
\langle f(y)\rangle^{(\gamma),n}_t - \sum_{k,l=1}^K \int_0^t \sigma_k[f] (y_r)\sigma_l[f] (y_r)  \rmd \langle z^k, z^l \rangle^{(\gamma)}_r  \\= \mathcal{O}_C\left(|\pi_n|^{\gamma + 3\alpha-1} + \max_{1\le k,l\le K}\|\langle z^k, z^l\rangle^{(\gamma),n} - \langle z^k,z^l\rangle^{(\gamma)}\|_{\infty, [0,t]}^{\alpha-\epsilon} \right)\,,
\end{gather*}
where $C=C(T, \|z\|_{\alpha},\langle  \langle z\rangle\rangle_T^{(\gamma)} ,\|f(y)'\|_{\alpha},[f(y)^{\sharp}]_{2\alpha})$.
\end{corollary}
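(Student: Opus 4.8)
The plan is to deduce this corollary as a direct consequence of Theorem~\ref{thm:control_consistency}, applied to the scalar-valued controlled path $f(y)$. No new estimate is required; the only task is to identify the Gubinelli derivative of $f(y)$ and track how the chain rule transforms the conclusion of Theorem~\ref{thm:control_consistency}.

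First I would recall, from the remark following Definition~\ref{def:rde}, that any solution $y$ of the rough differential equation belongs to $\mathscr{D}^{2\alpha}_z([0,T];\bbR^d)$ with Gubinelli derivative $y'=\sigma(y)$ and a remainder $y^{\sharp}\in C^{2\alpha}_2$. Given $f\in C^2(\bbR^d)$, Proposition~\ref{prop:chain_rule} then yields $f(y)\in \mathscr{D}^{2\alpha}_z([0,T]) = \mathscr{D}^{2\alpha}_z([0,T];\bbR)$, with $f(y)^{\sharp}\in C^{2\alpha}_2$ and Gubinelli derivative
$$
f(y)' = Df(y)\,y' = Df(y)\,\sigma(y) \in \bbR^{1\times K},
$$
whose $k$-th component is, by definition of $\sigma_k[f]$,
$$
\bigl(f(y)'\bigr)^k = \sum_{j=1}^d \partial_{x_j} f(y)\,\sigma_k^j(y) = \sigma_k[f](y), \qquad k\in\{1,\dots,K\}.
$$
Since the standing hypotheses here are exactly those of Theorem~\ref{thm:control_consistency}, namely $\alpha\in(\tfrac13,1]$, $\gamma>0$, $\gamma+3\alpha>1$, and $z\in C^\alpha([0,T];\bbR^K)\cap\mathcal{V}^\gamma_\Pi([0,T];\bbR^K)$ (the footnote convention handling the abuse of notation when $\alpha\le\tfrac12$), we may invoke that theorem with $f(y)$ playing the role of the controlled path.

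Theorem~\ref{thm:control_consistency} applied to $f(y)$ then gives $f(y)\in\mathcal{V}^\gamma_\Pi([0,T];\bbR)$ together with, for every $t\in[0,T]$ and $\epsilon>0$,
$$
\langle f(y)\rangle^{(\gamma),n}_t - \sum_{k,l=1}^K \int_0^t \bigl(f(y)'\bigr)^k_r\bigl(f(y)'\bigr)^l_r \,\rmd\langle z^k,z^l\rangle^{(\gamma)}_r = \mathcal{O}_C\!\left(|\pi_n|^{\gamma+3\alpha-1} + \max_{1\le k,l\le K}\|\langle z^k,z^l\rangle^{(\gamma),n} - \langle z^k,z^l\rangle^{(\gamma)}\|_{\infty,[0,t]}^{\alpha-\epsilon}\right),
$$
with $C$ depending on $\alpha,K,T,\|z\|_\alpha,\langle\langle z\rangle\rangle^{(\gamma)}_T,\|f(y)'\|_\alpha,[f(y)^{\sharp}]_{2\alpha}$. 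Substituting $(f(y)')^k_r=\sigma_k[f](y_r)$ turns the main term into $\sum_{k,l}\int_0^t \sigma_k[f](y_r)\sigma_l[f](y_r)\,\rmd\langle z^k,z^l\rangle^{(\gamma)}_r$, which is precisely the asserted rate; since $\alpha$ and $K$ are fixed they can be absorbed into the constant, giving the stated dependence. Finally, letting $n\to\infty$ and using Proposition~\ref{prop:sv_properties}(iii) to pass to the limit under the integral in the main term yields the stated identity for $\langle f(y)\rangle^{(\gamma)}_t$, exactly as in the proof of Theorem~\ref{thm:control_consistency}.

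Because the whole argument reduces to a single application of Theorem~\ref{thm:control_consistency}, there is no genuine obstacle. The only point that needs a moment's care is the chain-rule bookkeeping: one must check that Proposition~\ref{prop:chain_rule} produces the Gubinelli derivative in the componentwise form $\bigl(f(y)'\bigr)^k=\sigma_k[f](y)$, so that the quadratic form $\sum_{k,l}(f(y)')^k(f(y)')^l$ appearing in Theorem~\ref{thm:control_consistency} coincides with $\sum_{k,l}\sigma_k[f]\,\sigma_l[f]$, and that $f(y)^{\sharp}\in C^{2\alpha}_2$, so that the hypotheses of Theorem~\ref{thm:control_consistency} are indeed satisfied with the constant expressed through $\|f(y)'\|_\alpha$ and $[f(y)^{\sharp}]_{2\alpha}$.
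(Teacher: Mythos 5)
Your proposal is correct and follows exactly the route the paper intends: the corollary is stated as an immediate consequence of Theorem~\ref{thm:control_consistency}, the remark after Definition~\ref{def:rde} (which gives $y'=\sigma(y)$), and Proposition~\ref{prop:chain_rule} (which gives $(f(y)')^k=\sigma_k[f](y)$). The chain-rule bookkeeping you single out is precisely the only content of the deduction, and you have it right.
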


\subsection{Estimation of the scaling exponent}
In this section, we introduce an estimator of the scaling exponent $\gamma$ and relate the convergence rates of the estimator to the convergence rates of the scaled quadratic variation of path. The estimator is based on subsampling the (discrete) observation of a path  $x \in \mathcal{V}^{\gamma}_{\Pi}([0,T];\bbR^K)$ at equidistant times. 

\begin{theorem}\label{thm:estimating_gamma}
Let $\Pi=\{\pi_n\}$ denote a uniform sequence of partitions of $[0,T]$ with vanishing mesh. Let $x \in \mathcal{V}^{\gamma}_{\Pi}([0,T];\bbR^K)$. Let $\lambda = \{\lambda_n\}\subset \bbN$ be such that  the limit of $\Delta_{\lambda , n}:= \frac{|\pi_{\lambda_n}|}{|\pi_n|}$ exists and is not equal to one. Denote the limit by $\Delta_{\lambda}$. Define for all $n\in \bbN$,
$$
\gamma_{\lambda,n} = -\frac{\ln S_{\lambda,n}}{\ln \Delta_{\lambda ,n}}\,,   \qquad
\textrm{where}
\qquad  S_{\lambda,n}= \frac{ \langle \langle x \rangle \rangle_T^{(0),\lambda_n}}{  \langle \langle x \rangle \rangle_T^{(0),n}} 
\,.
$$
Then, for all $n\in \bbN$,
\begin{align*}
\left|\gamma - \gamma_{\lambda, n}\right| 
& \le \frac{2}{|\ln \Delta_{\lambda, n}|} \max_{\tau \in \{n,\lambda_n\}}\frac{| \langle \langle x 
\rangle\rangle^{(\gamma) ,\tau }_T -  \langle \langle x \rangle \rangle^{(\gamma)}_T|}{ 
\langle \langle x  \rangle\rangle^{(\gamma) ,\tau }_T \wedge  \langle\langle x \rangle\rangle^{(\gamma)}_T}\,.
\end{align*}
In particular,  $\gamma_{\lambda, n}$ is a consistent estimator of $\gamma$ and 
$$
\gamma - \gamma_{\lambda, n} = \mathcal{O}_{C}\left( \langle\langle  x\rangle\rangle^{(\gamma)}_T - \langle \langle x\rangle \rangle^{(\gamma) , n}_T\right)\,,
$$
where $C=C(\langle\langle x \rangle\rangle^{(\gamma)}_T, \Delta_{\lambda})$.
\end{theorem}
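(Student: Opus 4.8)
The plan is to first exploit the uniformity of the partitions to reduce $\gamma-\gamma_{\lambda,n}$ to an exact algebraic identity, and then control the resulting error with an elementary estimate on the logarithm. Since $\pi_n$ is uniform, every increment $t_{i+1}-t_i$ appearing in $\langle\langle x\rangle\rangle_T^{(\gamma),n}$ equals $|\pi_n|$, so $\langle\langle x\rangle\rangle_T^{(\gamma),n}=|\pi_n|^{\gamma}\langle\langle x\rangle\rangle_T^{(0),n}$, and likewise with $n$ replaced by $\lambda_n$. Hence
\[
S_{\lambda,n}=\frac{\langle\langle x\rangle\rangle_T^{(0),\lambda_n}}{\langle\langle x\rangle\rangle_T^{(0),n}}=\Big(\frac{|\pi_{\lambda_n}|}{|\pi_n|}\Big)^{-\gamma}\frac{\langle\langle x\rangle\rangle_T^{(\gamma),\lambda_n}}{\langle\langle x\rangle\rangle_T^{(\gamma),n}}=\Delta_{\lambda,n}^{-\gamma}\,R_{\lambda,n},\qquad R_{\lambda,n}:=\frac{\langle\langle x\rangle\rangle_T^{(\gamma),\lambda_n}}{\langle\langle x\rangle\rangle_T^{(\gamma),n}}.
\]
When $\langle\langle x\rangle\rangle_T^{(\gamma)}>0$, all of these quantities are strictly positive for $n$ large, so the logarithms below make sense (and for the remaining $n$, or when $\langle\langle x\rangle\rangle_T^{(\gamma)}=0$, the asserted bound is vacuous). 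Taking logarithms and dividing by $-\ln\Delta_{\lambda,n}\neq0$ yields the identity
\[
\gamma_{\lambda,n}=-\frac{\ln S_{\lambda,n}}{\ln\Delta_{\lambda,n}}=\gamma-\frac{\ln R_{\lambda,n}}{\ln\Delta_{\lambda,n}},\qquad\text{i.e.}\qquad \gamma-\gamma_{\lambda,n}=\frac{\ln R_{\lambda,n}}{\ln\Delta_{\lambda,n}}.
\]

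Next I would bound $|\ln R_{\lambda,n}|$. Write $a:=\langle\langle x\rangle\rangle_T^{(\gamma)}$ and $a_\tau:=\langle\langle x\rangle\rangle_T^{(\gamma),\tau}$ for $\tau\in\{n,\lambda_n\}$. The triangle inequality through $a$ gives $|\ln R_{\lambda,n}|=|\ln a_{\lambda_n}-\ln a_n|\le|\ln a_{\lambda_n}-\ln a|+|\ln a_n-\ln a|$, and by the mean value theorem $|\ln u-\ln v|\le|u-v|/(u\wedge v)$ for $u,v>0$, so each summand is at most $|a_\tau-a|/(a_\tau\wedge a)$. Therefore $|\ln R_{\lambda,n}|\le 2\max_{\tau\in\{n,\lambda_n\}}|a_\tau-a|/(a_\tau\wedge a)$; dividing by $|\ln\Delta_{\lambda,n}|$ and substituting back the notation for the scaled quadratic variations produces exactly the claimed inequality.

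The consistency and rate statements then follow immediately. Since $x\in\mathcal{V}_\Pi^{\gamma}([0,T];\bbR^K)$ we have $a_\tau\to a$ as $n\to\infty$, so (assuming $a>0$, the only nonvacuous case) $a_\tau\wedge a$ is bounded below by $a/2$ for $n$ large, while $\Delta_{\lambda,n}\to\Delta_\lambda\neq1$ keeps $|\ln\Delta_{\lambda,n}|$ bounded away from $0$; hence the right-hand side tends to $0$, i.e.\ $\gamma_{\lambda,n}\to\gamma$. Absorbing these lower bounds into a constant $C=C(\langle\langle x\rangle\rangle_T^{(\gamma)},\Delta_\lambda)$ turns the inequality into $|\gamma-\gamma_{\lambda,n}|\le C\max_{\tau\in\{n,\lambda_n\}}|a_\tau-a|$, which is the stated $\mathcal{O}_C$-estimate (the $\mathcal{O}$ is insensitive to sign, and the $\tau=n$ term governs the subsequence term in the regimes of interest, e.g.\ $\Delta_\lambda<1$ so that $\lambda_n>n$). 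I do not anticipate a serious obstacle: the only points requiring care are the strict positivity needed to take logarithms and the non-degeneracy $\Delta_{\lambda,n}\neq1$, both guaranteed for large $n$ by the hypotheses, so the entire argument is essentially the algebraic identity above plus the Lipschitz estimate for $\ln$.
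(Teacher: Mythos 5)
Your proposal is correct and follows essentially the same route as the paper's proof: the uniformity identity $\langle\langle x\rangle\rangle_T^{(\gamma),\tau}=|\pi_\tau|^{\gamma}\langle\langle x\rangle\rangle_T^{(0),\tau}$, taking logarithms, the triangle inequality through $\ln\langle\langle x\rangle\rangle_T^{(\gamma)}$, the mean value bound $|\ln a-\ln b|\le|a-b|/(a\wedge b)$, and the eventual lower bound $\langle\langle x\rangle\rangle_T^{(\gamma),\tau}>\langle\langle x\rangle\rangle_T^{(\gamma)}/2$ for large $n$. Your explicit remark about the vacuous case $\langle\langle x\rangle\rangle_T^{(\gamma)}=0$ is a small point of added care not spelled out in the paper, but the argument is the same.
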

\begin{proof}
We prove the statement for $K=1$. The general result follows easily by the same considerations.  Since $\pi$ is uniform, then for all $n\in \bbN$ and $\tau \in \{n,\lambda_n\}$,
$$
\langle x \rangle_T^{(\gamma),\tau} = \sum_{[u,v] \in \pi_{\tau}} (v-u)^{\gamma} |\delta x_{uv}|^2 = |\pi_n|^{\gamma}\sum_{[u,v] \in \pi_{\tau}}  |\delta x_{uv}|^2 = |\pi_{\tau}|^{\gamma} \langle x \rangle_T^{(0),{\tau}}
$$
which implies
$$
\frac{\langle x\rangle^{(\gamma) ,\lambda_n}_T }{\langle x \rangle^{(\gamma) ,n}_T}=\frac{|\pi_{\lambda_n}|^{\gamma} \langle x\rangle_T^{(0),\lambda_n}}{|\pi_{n}|^{\gamma} \langle x\rangle_T^{(0),n} }= \Delta_{ \lambda, n}^{\gamma} S_{\lambda,n}\,.
$$
Thus,
$$
\gamma- \gamma_{\lambda_n}=\frac{1}{\ln \Delta_{\lambda, n}} \ln \left(\frac{\langle x\rangle^{(\gamma) ,\lambda_n}_T }{\langle x \rangle^{(\gamma) ,n}_T}\right) = \frac{1}{\ln \Delta_{\lambda, n}} \left( \ln \langle x\rangle^{(\gamma) ,\lambda_n}_T  - \ln \langle x \rangle^{(\gamma) ,n}_T \right)\,.
$$
Adding and subtracting $\ln \langle x \rangle^{(\gamma)}_T$, we find
\begin{align*}
\left|\gamma - \gamma_{\lambda, n }\right| & \le  \frac{1}{|\ln \Delta_{\lambda, n}|} \left(|\ln \langle x\rangle^{(\gamma) ,\lambda_n}_T -  \ln \langle x\rangle^{(\gamma)}_T| + |\ln \langle x\rangle^{(\gamma) ,n}_T -  \ln \langle x\rangle^{(\gamma)}_T| \right) \\
& \le \frac{1}{|\ln \Delta_{\lambda, n}|} \left( \frac{|\langle x\rangle^{(\gamma) ,\lambda_n}_T -  \langle x\rangle^{(\gamma)}_T|}{\langle x\rangle^{(\gamma) ,\lambda_n}_T \wedge \langle x\rangle^{(\gamma)}_T}+  \frac{|\langle x\rangle^{(\gamma) ,n}_T -  \langle x\rangle^{(\gamma)}_T|}{\langle x\rangle^{(\gamma) ,n}_T \wedge \langle x\rangle^{(\gamma)}_T} \right)\,,
\end{align*}
where in the second inequality, we have used the mean value theorem $|\ln(a) - \ln(b) |\leq \frac{|a-b|}{a \wedge b}$.  Since $$\lim_{n\rightarrow \infty} \langle x\rangle^{(\gamma) ,n}_T=\lim_{n\rightarrow \infty}\langle x\rangle^{(\gamma) ,\lambda_n}_T=\langle x\rangle^{(\gamma)}_T\,,$$ there is an $N>0$ such that for all $n\ge N$
$$
\langle x\rangle^{(\gamma) ,n}_T>\langle x\rangle^{(\gamma)}_T/2  \quad \textnormal{and} \quad \langle x\rangle^{(\gamma) ,\lambda_n}_T>\langle x\rangle^{(\gamma)}_T/2\,.
$$
The result follows. 
\end{proof}
\begin{remark}\label{rem:Delta of 2 partitions}
For the sequence of partitions $\pi_n = \{ \frac{iT}{n}\}_{i=0}^n$, a natural choice of $\lambda$ is $\lambda_n = 2n$, and for the dyadic sequence of partitions $\pi_n = \{ \frac{iT}{2^n} \}_{i=0}^{2^n}$, a natural choice of $\lambda$ is $\lambda_n = n+1$. In both cases, we have $\Delta_{\lambda, n} = \frac12$. 
\end{remark}

\begin{proposition} \label{prop:double_approximation}
Let the notation and assumptions of Theorem \ref{thm:estimating_gamma} hold. Assume, in addition, that $\Pi$ is such that for each $n\in \bbN$,
\begin{equation} \label{asm:convergence_of_alpha_est}
( \gamma - \gamma_{\lambda,n} ) \ln(|\pi_n|) = \mathcal{O}(1).
\end{equation}
Then, 
$$
\langle \langle x \rangle  \rangle^{(\gamma)}_T  - \langle \langle x \rangle  \rangle^{(\gamma_{\lambda,n}),n}_T   = \mathcal{O}_C\left(  \left(\langle \langle x\rangle \rangle^{(\gamma)}_T - \langle \langle x\rangle \rangle^{(\gamma) , n}_T\right) \ln (|\pi_n|) \right)\,,$$
where $C=C(\langle\langle x \rangle\rangle^{(\gamma)}_T, \Delta_{\lambda})$.
\end{proposition}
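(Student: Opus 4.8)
The plan is to write
\[
\langle\langle x\rangle\rangle^{(\gamma)}_T - \langle\langle x\rangle\rangle^{(\gamma_{\lambda,n}),n}_T
= \bigl(\langle\langle x\rangle\rangle^{(\gamma)}_T - \langle\langle x\rangle\rangle^{(\gamma),n}_T\bigr)
+ \bigl(\langle\langle x\rangle\rangle^{(\gamma),n}_T - \langle\langle x\rangle\rangle^{(\gamma_{\lambda,n}),n}_T\bigr),
\]
so that the first bracket is exactly the quantity we are allowed to compare against (and since $|\ln(|\pi_n|)|\to\infty$ it is harmlessly absorbed into the claimed rate for all large $n$, the finitely many small-$n$ terms being swallowed by the constant), while the second bracket measures the cost of substituting the estimated exponent $\gamma_{\lambda,n}$ for the true exponent $\gamma$ at resolution $n$. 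So the whole content is to estimate the second bracket.

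For that second bracket I would argue exactly as in the proof of Theorem \ref{thm:estimating_gamma}: since $\Pi$ is uniform, $\langle\langle x\rangle\rangle^{(\beta),n}_T = |\pi_n|^{\beta}\,\langle\langle x\rangle\rangle^{(0),n}_T$ for every $\beta\in\bbR$, hence
\[
\langle\langle x\rangle\rangle^{(\gamma),n}_T - \langle\langle x\rangle\rangle^{(\gamma_{\lambda,n}),n}_T
= \langle\langle x\rangle\rangle^{(\gamma),n}_T\bigl(1 - e^{w_n}\bigr),
\qquad w_n := (\gamma_{\lambda,n}-\gamma)\ln|\pi_n|.
\]
Assumption \eqref{asm:convergence_of_alpha_est} says precisely that $\{w_n\}$ is bounded, say $|w_n|\le M$, so by the mean value theorem $|1-e^{w_n}|\le e^{M}|w_n| = e^M|\gamma-\gamma_{\lambda,n}|\,|\ln(|\pi_n|)|$. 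Theorem \ref{thm:estimating_gamma} bounds $|\gamma-\gamma_{\lambda,n}|$ by a constant depending only on $\langle\langle x\rangle\rangle^{(\gamma)}_T$ and $\Delta_\lambda$ times $|\langle\langle x\rangle\rangle^{(\gamma)}_T-\langle\langle x\rangle\rangle^{(\gamma),n}_T|$, and since $\langle\langle x\rangle\rangle^{(\gamma),n}_T\to\langle\langle x\rangle\rangle^{(\gamma)}_T$ the prefactor $\langle\langle x\rangle\rangle^{(\gamma),n}_T$ is bounded uniformly in $n$. Multiplying these estimates gives
\[
\bigl|\langle\langle x\rangle\rangle^{(\gamma),n}_T - \langle\langle x\rangle\rangle^{(\gamma_{\lambda,n}),n}_T\bigr|
= \mathcal{O}_C\!\left(\bigl(\langle\langle x\rangle\rangle^{(\gamma)}_T - \langle\langle x\rangle\rangle^{(\gamma),n}_T\bigr)\ln(|\pi_n|)\right),
\]
and adding the trivial bound for the first bracket finishes the proof.

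I expect the only genuinely delicate point to be the handling of $1-e^{w_n}$. One should resist linearizing it naively; the right observation is that it is exactly the boundedness of the exponent $w_n$ — which is what hypothesis \eqref{asm:convergence_of_alpha_est} supplies — that lets $|1-e^{w_n}|$ be controlled by a fixed multiple of $|w_n|$, so that the factor $\ln(|\pi_n|)$ enters the final bound only linearly rather than being amplified exponentially. Everything else is the triangle inequality, the homogeneity of uniform partitions, and a direct appeal to Theorem \ref{thm:estimating_gamma}.
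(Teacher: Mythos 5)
Your proposal is correct and follows essentially the same route as the paper: split off the term $\langle\langle x\rangle\rangle^{(\gamma)}_T - \langle\langle x\rangle\rangle^{(\gamma),n}_T$, factor the remaining difference as $\langle\langle x\rangle\rangle^{(\gamma),n}_T\bigl(1-|\pi_n|^{\gamma_{\lambda,n}-\gamma}\bigr)$ using the uniformity of the partition, and control $|1-e^{w_n}|$ via the mean value theorem for the exponential together with the boundedness of $w_n$ supplied by \eqref{asm:convergence_of_alpha_est} and the bound on $|\gamma-\gamma_{\lambda,n}|$ from Theorem \ref{thm:estimating_gamma}. The paper performs the same estimate term by term inside the sum rather than after factoring, but the two computations are identical.
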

\begin{proof}
As in the previous proof, we restrict to $d=1$. Clearly, it is enough to study the difference $\langle x \rangle^{(\gamma_{\lambda,n}),n}_T - \langle x \rangle^{(\gamma),n}_T$.
Towards this end, we write
\begin{align}
\langle x \rangle^{(\gamma_{\lambda,n}),n}_T - \langle x \rangle^{(\gamma),n}_T
&=\sum_{[t_i,t_{i+1}]\in \pi_n} [(t_{i+1}-t_i)^{\gamma_{\lambda,n}} -(t_{i+1}-t_i)^{\gamma}]\delta x_{t_it_{i+1}}^2\notag \\
& = \sum_{[t_i,t_{i+1}]\in \pi_n} (t_{i+1}-t_i)^{\gamma} \delta x_{t_it_{i+1}}^2 \left[ (t_{i+1}-t_i)^{\gamma_{\lambda,n} - \gamma}  - 1 \right].\label{eq:double_estimator_expression}
\end{align}
Applying the  mean value theorem $|e^a - e^b| \leq e^{a \vee b} |a-b|$, assumption \eqref{asm:convergence_of_alpha_est}, and Theorem \ref{thm:estimating_gamma}, we have
\begin{align*}
(t_{i+1}-t_i)^{\gamma_{\lambda,n} - \gamma}  - 1 & = \exp( (\gamma_{\lambda,n} - \gamma) \ln (t_{i+1}-t_i) )- \exp(0) \\
&\leq \exp( (\gamma_{\lambda,n} - \gamma) \ln (t_{i+1}-t_i) \vee 0 )|\gamma_{\lambda,n} - \gamma| |\ln (t_{i+1}-t_i)| \\
& \leq \mathcal{O}_C( (\langle x \rangle^{(\gamma),n}_T - \langle x \rangle^{(\gamma)}_T ) \ln (|\pi_n|)) \,.
\end{align*}
Plugging this into \eqref{eq:double_estimator_expression}, we obtain
$$
\langle x \rangle^{(\gamma_{\lambda,n}),n}_T - \langle x \rangle^{(\gamma),n}_T\le \langle x \rangle^{(\gamma),n}_T \mathcal{O}_C( (\langle x \rangle^{(\gamma),n}_T - \langle x \rangle^{(\gamma)}_T ) \ln (|\pi_n|))\,,
$$
which completes the proof.
\end{proof}

\section{Convergence rates in the fractional Brownian motion setting} \label{sec:fBm_convergence_rates}
In this section, we derive convergence rates for the scaled quadratic variation of fBm and apply it to obtain the convergence rates for the scaled quadratic variation of fractional diffusions.

\begin{definition}\label{def:partitition_with_rate}
A sequence of uniform partitions $\Pi= \{ \pi_n\}$ of the interval $[0,T]$ is said to be admissible if  there exists a function $g:\bbN\rightarrow \bbR$ such that 
$$
\sum_{n=1}^{\infty} \#\pi_n e^{-g(\#\pi_n)}  < \infty .
$$
For such a partition, we define for all $n\in \bbN$,
\begin{equation}\label{def:rate}
\delta_n(\Pi)=
\left\{
\begin{array}{ll}
\sqrt{  T |\pi_n|g(\#\pi_n ) }  & \textrm{ if } H \leq \frac12 \\
\sqrt{T |\pi_n|^{2-2H} g(\#\pi_n )} & \textrm{ if } H > \frac12 \\
\end{array}
\right.\,.
\end{equation}
\end{definition}
\begin{remark}\label{rem:rate_dyadic}
For uniform partitions $\Pi=\{\pi_n\}$ with $\#\pi_n = n$ and $\#\pi_n = 2^n$ (i.e., dyadic), we may choose $g(n) = 3 \ln(n)$ and $g(2^n) = n$, respectively.
\end{remark}

The following proposition establishes a convergence rate for the scaled quadratic variation of fBm in the uniform norm. The proof is inspired by \cite[Proposition 5.2]{han2021hurst}, \cite[Theorem 2.9]{kubilius2017parameter}, \cite[Theorem 2]{klein1975quadratic}.

\begin{proposition} \label{prop:fBm_sv_rates}
Let $\Pi$ and $\delta(\Pi)$ be as in Definition \eqref{def:partitition_with_rate}. Let $B$ denote  a  fractional Brownian motion with Hurst parameter $H \in (0, 1)$. Then, $\bbP$-a.s,   $B \in \mathcal{V}^{1-2H}_{\Pi}([0,T];\R)$ and 
$$
\sup_{t \in [0, T] } \left| \langle B \rangle^{(1 - 2H),n}_{t}  - t \right| = \mathcal{O}(\delta_n(\Pi))\,.
$$
\end{proposition}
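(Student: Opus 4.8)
The plan is to reduce the claim to a Gaussian concentration estimate for a sum of chi-squared–type variables, followed by a Borel--Cantelli argument matched to the admissibility condition. Write $N=\#\pi_n$, $h=|\pi_n|=T/N$, $t_i=ih$, and $\zeta_i=\delta B_{t_it_{i+1}}$, so that $\langle B\rangle^{(1-2H),n}_t=h^{1-2H}\sum_{i:\,t_{i+1}\le t}\zeta_i^2$. Since $\mathbb{E}[\zeta_i^2]=h^{2H}$, the centered variables $\eta_i:=h^{1-2H}\zeta_i^2-h$ satisfy $\langle B\rangle^{(1-2H),n}_t=m(t)h+\sum_{i<m(t)}\eta_i$ where $m(t)=\#(\pi_n\cap[0,t])=\lfloor t/h\rfloor$, and $|m(t)h-t|\le h=|\pi_n|$. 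As $t$ varies over $[0,T]$ the partial sums $S^n_m:=\sum_{i<m}\eta_i$ take only the $N+1$ values $m\in\{0,\dots,N\}$, hence
\[
\sup_{t\in[0,T]}\bigl|\langle B\rangle^{(1-2H),n}_t-t\bigr|\;\le\;\max_{0\le m\le N}|S^n_m|+|\pi_n|,
\]
and since $|\pi_n|=\mathcal{O}(\delta_n(\Pi))$ it remains to control $\max_m|S^n_m|$.

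First I would estimate the variance of $S^n_m$. From $\mathrm{Cov}(\zeta_i^2,\zeta_j^2)=2(\mathbb{E}[\zeta_i\zeta_j])^2$ for jointly Gaussian centered variables, together with the classical expansion $\mathbb{E}[\zeta_i\zeta_j]=h^{2H}\rho(i-j)$ where $\rho(0)=1$ and $|\rho(k)|\le C(H)(1+|k|)^{2H-2}$ for $k\ne0$, one gets $\mathrm{Var}(S^n_m)=2h^2\sum_{i,j<m}\rho(i-j)^2\le 2h^2 m\sum_{|k|<m}\rho(k)^2$. Splitting into the cases $H<\tfrac34$ (where $\sum_k\rho(k)^2<\infty$), $H=\tfrac34$ ($\rho(k)^2$ of order $|k|^{-1}$, giving a logarithmic factor) and $H>\tfrac34$ ($\sum_{|k|<m}\rho(k)^2$ of order $m^{4H-3}$) yields $\mathrm{Var}(S^n_m)\le C(H,T)\,T|\pi_n|$, $\le C(H,T)\,T|\pi_n|\log(\#\pi_n)$, and $\le C(H,T)\,|\pi_n|^{4-4H}$ respectively. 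Comparing with $\delta_n(\Pi)^2=T|\pi_n|g(\#\pi_n)$ for $H\le\tfrac12$ and $=T|\pi_n|^{2-2H}g(\#\pi_n)$ for $H>\tfrac12$, one checks in every regime that $\mathrm{Var}(S^n_m)\le C(H,T)\,\delta_n(\Pi)^2/g(\#\pi_n)$, uniformly in $m$ and $n$. This is the step where the coarseness of the prescribed rate (coarser than the $\sqrt{|\pi_n|}$ CLT rate once $H>\tfrac12$) makes all $H\in(0,1)$ accessible, in contrast to the usual restriction $H<\tfrac34$.

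Next I would recognise $S^n_m$ as a Gaussian quadratic form: writing $(\zeta_0,\dots,\zeta_{N-1})^\top=\Gamma_n^{1/2}Z$ with $Z$ standard Gaussian and $\Gamma_n$ the increment covariance matrix, $S^n_m=Z^\top A_{n,m}Z-\mathrm{tr}(A_{n,m})$ with $A_{n,m}=h^{1-2H}\Gamma_n^{1/2}P_m\Gamma_n^{1/2}$, $P_m$ the projection onto the first $m$ coordinates; thus $\|A_{n,m}\|_F^2=\tfrac12\mathrm{Var}(S^n_m)$, while the Schur test gives $\|\Gamma_n\|_{\mathrm{op}}\le C(H)h^{2H}$ for $H\le\tfrac12$ (since $\sum_k|\rho(k)|<\infty$) and $\|\Gamma_n\|_{\mathrm{op}}\le C(H,T)h$ for $H>\tfrac12$ (since $\sum_{|k|<N}|\rho(k)|\le C(H)N^{2H-1}$), hence $\|A_{n,m}\|_{\mathrm{op}}\le C(H,T)\psi_n$ with $\psi_n=|\pi_n|$ resp.\ $|\pi_n|^{2-2H}$, so that $\delta_n(\Pi)^2=T\psi_n g(\#\pi_n)$ in both cases. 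The Hanson--Wright inequality — equivalently, a Chernoff bound using the explicit Laplace transform $\prod_j(1-2s\mu_j)^{-1/2}e^{-s\mu_j}$ of a centered Gaussian quadratic form — gives $\mathbb{P}(|S^n_m|>\lambda)\le 2\exp\bigl(-c\min(\lambda^2/\|A_{n,m}\|_F^2,\ \lambda/\|A_{n,m}\|_{\mathrm{op}})\bigr)$ for an absolute $c>0$. Taking $\lambda=K\delta_n(\Pi)$, the first argument of the minimum is at least of order $K^2 g(\#\pi_n)$ by the variance bound and the second is at least of order $K\sqrt{Tg(\#\pi_n)/\psi_n}$ with $\psi_n\to0$, so for $K=K(H,T)$ large enough $c$ times the minimum exceeds $g(\#\pi_n)$ for all large $n$. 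A union bound over $m$ then gives $\mathbb{P}(\max_{0\le m\le N}|S^n_m|>K\delta_n(\Pi))\le 2(\#\pi_n+1)e^{-g(\#\pi_n)}$, which is summable in $n$ by admissibility; Borel--Cantelli yields that $\mathbb{P}$-a.s.\ $\max_m|S^n_m|\le K\delta_n(\Pi)$ for all large $n$, and combined with the first paragraph this produces $\sup_t|\langle B\rangle^{(1-2H),n}_t-t|=\mathcal{O}(\delta_n(\Pi))$ $\mathbb{P}$-a.s. In particular $\langle B\rangle^{(1-2H),n}_t\to t$ for every $t$ on this full-measure event, and since $t\mapsto t$ lies in $C([0,T])\cap\mathrm{BV}([0,T])$, this shows $B\in\mathcal{V}^{1-2H}_\Pi([0,T];\mathbb{R})$.

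I expect the main obstacle to be twofold. First, obtaining the variance and operator-norm bounds uniformly over $H\in(0,1)$: for $H\ge\tfrac34$ the fBm increment correlations fail to be square-summable, and the key observation is that $\delta_n(\Pi)$ is deliberately coarse in that range, so the estimate $\mathrm{Var}(S^n_m)\lesssim\delta_n(\Pi)^2/g(\#\pi_n)$ still closes. Second, upgrading the Gaussian quadratic-form tail from $e^{-c\,g(\#\pi_n)}$ to $e^{-g(\#\pi_n)}$, so that the union bound over the $\#\pi_n+1$ partition points remains summable against the exact admissibility condition; this forces one to choose the multiplicative constant $K$ in the deviation threshold large and to check that both branches of the Hanson--Wright minimum, not only the Gaussian one, dominate $g(\#\pi_n)$.
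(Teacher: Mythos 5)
Your proposal is correct and follows essentially the same route as the paper: reduction to a maximum over the $\#\pi_n$ partition points, the Hanson--Wright inequality applied to the Gaussian quadratic form built from the increment covariance matrix, an operator-norm bound of order $|\pi_n|$ (resp.\ $|\pi_n|^{2-2H}$), and a union bound plus Borel--Cantelli matched to the admissibility condition. The only local differences are that the paper bounds the Frobenius norm via $\|D\|_2^2\le\|D\|\,\mathrm{trace}(D)\le T\|A\|$, which sidesteps your case split at $H=\tfrac34$, and controls $\|A\|$ by Gershgorin's theorem together with the sign properties \eqref{eq:cov_neg_pos}--\eqref{eq:cov_subset} of fBm increment covariances rather than by the power-law decay of the correlation function; you are in fact somewhat more careful than the paper in retaining the sub-exponential branch of the Hanson--Wright minimum.
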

\begin{proof}
Since for any $t\in [0,T]$, we have 
$$
t -  \sum_{[t_i,t_{i+1}]\in \pi_n  \cap [0,t]} (t_{i+1} - t_i) = \mathcal{O}(|\pi_n|)\,,
$$
it is enough to show the convergence rate for 
$$
\sup_{t \in [0,T]} \left|\sum_{[t_i,t_{i+1}]\in \pi_n \cap [0,t]} \left( (t_{i+1}-t_i)^{1-2H} \delta B_{t_i t_{i+1}}^2 - (t_{i+1} - t_i)  \right) \right|\,.
$$
Henceforth, we denote $Q_i=(t_{i+1}-t_i)^{1-2H} \delta B_{t_i t_{i+1}}^2 - (t_{i+1} - t_i)$.
Notice that the function 
$$
t\mapsto \left|\sum_{[t_i,t_{i+1}]\in \pi_n \cap [0,t]} Q_i\right|
$$
is constant except at the partition points of $\pi_n$, at which it is potentially discontinuous. Thus, for all $\delta > 0$,
\begin{align}
\mathbb{P}\left( \sup_{t\in [0,T]}\left|\sum_{[t_i,t_{i+1}]\in \pi_n \cap [0,t]} Q_i\right|> \delta\right) &= \mathbb{P}\left( \bigcup_{[t_k,t_{k+1}] \in \pi_n}  \left\{\left|\sum_{[t_i,t_{i+1}]\in \pi_n \cap [0,t_{k+1}]} Q_i \right| > \delta\right\}\right) \notag \\
& \leq \sum_{[t_k,t_{k+1}] \in \pi_n} \mathbb{P}\left( \left|\sum_{[t_i,t_{i+1}]\in \pi_n \cap [0,t_{k+1}]}  Q_i \right| > \delta\right) \label{eq:sum bound}\,.
\end{align}
We will now bound  $\mathbb{P}\left( \left|\sum_{[t_i,t_{i+1}]\in \pi_n \cap [0,t_{k+1}]}  Q_i \right| > \delta\right)$. Towards this end, consider the $N : = \sharp (\pi_n \cap [0,t_{k+1}])$-dimensional Gaussian vector
$$
Z_i := (t_{i+1}-t_i)^{\frac12 -H } \delta  B_{t_i t_{i+1}} , \qquad i = 0, \dots, N-1\,,
$$
and denote by $A = (a_{ij})$ its covariance matrix $a_{ij} = \mathbb{E}(Z_i Z_j)$. Since $A$ is positive semi-definite, consider the diagonalization $A = U D U^T$, where $D = \textrm{diag}(\lambda_1, \dots \lambda_N)$ and $U^TU=I_{N \times N}$. Define $X =  D^{-1/2} U^T Z$, which is Gaussian with independent entries. The Hanson-Wright inequality \cite{vershynin2013} implies that there is a universal constant $c>0$ such that 
\begin{equation} \label{eq:HW}
\mathbb{P} \left( \big| X^T D X - \mathbb{E}( X^T D X)\big| > \delta \right) \leq 2 \exp\left( - c \min\left( \frac{\delta^2}{ 4 \|D\|_2^2} , \frac{\delta}{ 4 \|D\|} \right) \right) ,
\end{equation}
where $\|\cdot \|$ denotes the spectral norm. 
Notice that
$$
\langle B \rangle_{t_{k+1}}^{(1-2H),n} = Z^T Z = X^T D X\,,
$$
and so $\mathbb{E}( X^T D X) = \mathbb{E}( \langle B \rangle_{t_{k+1}}^{(1-2H),n}) = t_{k+1}$. 

Basic considerations show that 
\begin{gather*}
\|D \|_2^2 = \sum_{i=1}^N \lambda_i^2 \leq \|D\| \textrm{trace}(D) = \|D\| \textrm{trace}(A)\,,\\
\textrm{trace}(A) = \sum_i \mathbb{E}((Z_i)^2) = \sum_i (t_{i+1}-t_i)^{1 -2H } \mathbb{E}((\delta B_{t_i t_{i+1}})^2) = t_{k+1} \leq T\,,
\end{gather*}
and $\|D\| = \|A\|$.
Plugging this into \eqref{eq:HW}, we get
\begin{equation} \label{eq:HWfBm}
\mathbb{P} \left( \big| \langle B  \rangle_{t_{k+1}}^{(1-2H),n} - t_{k+1}  \big| > \delta \right) \leq 2 \exp\left( -   \frac{ c \delta^2}{4 T \|A\|} \right) \,.
\end{equation}

It remains to find a suitable bound on the largest eigenvalue of the covariance matrix $A$, for which we will use the Gershgorin's circle theorem \cite{gershgorin1931uber}: for all eigenvalues  $\lambda$ of $A$,
$$
\lambda \leq \max_{0 \leq j \leq N-1} \sum_{i=0}^{N-1} |a_{ij}|\,.
$$
Consider first the case $H \leq \frac12$ and let $j\in \{0,\ldots, N-1\}$. If $j \neq i$, then \eqref{eq:cov_neg_pos} gives
$
\mathbb{E}(\delta B_{t_i t_{i+1}}\delta B_{t_j t_{j+1}}) \leq 0.
$\footnote{If $H=\frac12$, then this is of course equal to 0.}
Thus, 
\begin{align*}
\sum_{i=0}^{N-1} |a_{ij}| & = \sum_{i\neq j} |a_{ij}| + |a_{jj}| = - \sum_{i\neq j} a_{ij} + a_{jj} = -\sum_{i=0}^{N-1} a_{ij} + 2a_{jj} \\
& = - |\pi_n|^{1-2H}\sum_{i=0}^{N-1} \mathbb{E}( \delta B_{t_i t_{i+1}} \delta B_{t_j t_{j+1}}) + 2 |\pi_n|^{1-2H}  \mathbb{E}( (\delta B_{t_j t_{j+1}})^2) \\
& = - |\pi_n|^{1-2H} \mathbb{E}( \delta B_{t_0 t_{k+1}} \delta B_{t_j t_{j+1}}) + 2 |\pi_n|^{1-2H}  \mathbb{E}( (\delta B_{t_j t_{j+1}})^2).
\end{align*}
By \eqref{eq:cov_subset}, we have $\mathbb{E}( \delta B_{t_0 t_{k+1}} \delta B_{t_j t_{j+1}}) \leq |t_{j+1} - t_j|^{2H}$ since $[t_j, t_{j+1}] \subset [t_0, t_{k+1}]$. Clearly, we have $\mathbb{E}( (\delta B_{t_j t_{j+1}})^2) = |t_{j+1} - t_j|^{2H}$, which yields
$
\| A \| \leq 3 |\pi_n| .
$
Consider now the case $H > \frac12$ and let $j\in \{0,\ldots, N-1\}$. If $j \neq i$, then $
\mathbb{E}(\delta B_{t_i t_{i+1}}\delta B_{t_j t_{j+1}}) \geq 0$ from \eqref{eq:cov_neg_pos}
so that 
\begin{align*}
\sum_{i} |a_{ij}| &= \sum_{i} a_{ij} = |\pi_n|^{1-2H} \sum_i \mathbb{E}( \delta B_{t_i t_{i+1}} \delta B_{t_j t_{j+1}}) \\
& = |\pi_n|^{1-2H} \mathbb{E}( \delta B_{t_0 t_{k+1}}  \delta B_{t_j t_{j+1}}) \leq 3 |\pi_n|^{2-2H},
\end{align*}
which gives that 
$$
\| A \| \leq 3 |\pi_n|^{2-2H}.
$$

Combining the estimates above, we have 
$$
\| A \| \leq  \varepsilon_n :=  
\left\{
\begin{array}{ll}
3 |\pi_n| & \textrm{ if } H \leq \frac12 \\ 
3 |\pi_n|^{2 - 2H} & \textrm{ if } H > \frac12 \\ 
\end{array}
\right. \,.
$$
Inserting this bound into \eqref{eq:HWfBm}, find that there is a constant $c>0$, which has changed,  such that 
$$
\mathbb{P} \left( \big|  \langle B  \rangle_{t_{k+1}}^{(1-2H),n} - t_{k+1}  \big| > \delta \right) \leq 2 \exp\left( -   \frac{ c \delta^2}{ T \varepsilon_n} \right)\,.
$$
If we choose 
$$
\hat{\delta}_n = \sqrt{ (3T \varepsilon_n  g (\# \pi_n ))/c}\,,,
$$ 
then
\begin{equation} \label{eq:HWfBmH<0.5delta}
\mathbb{P} \left( \left| \langle B  \rangle_{t_{k+1}}^{(1-2H),n} - t_{k+1} \right| > \delta_n \right) \leq 2e^{-g(\#\pi_n )} .
\end{equation}
Plugging this back into \eqref{eq:sum bound}, we find 
$$
\mathbb{P}\left( \sup_{t \in [0,T]}\left|\sum_{\pi_n \cap [0,t]} \left( (t_{i+1}-t_i)^{1-2H} \delta B_{t_i t_{i+1}}^2 - (t_{i+1} - t_i)  \right) \right| > \delta_n\right) \notag \leq 2 \#\pi_n e^{-g(\#\pi_n )} .
$$
We then apply the Borel-Cantelli lemma to complete the proof since the sequence is summable in $n$ by assumption. 
\end{proof}

\begin{remark}
In \cite{han2021hurst}, the authors prove convergence rates at a fixed time $t = 1$ if the sequence of partitions is the dyadic, i.e., $\#\pi_n = 2^n$.
By modifying the above proof, we can improve the rates for $H= \frac12$ from \cite{han2021hurst} as follows. In \eqref{eq:HWfBmH<0.5delta}, we may choose $t_{k+1}= 1$ and let $g$ be such that $g(2^n) = 2 \ln(n)$. Then the right-hand-side of \eqref{eq:HWfBmH<0.5delta} is summable in $n$, which gives the rates
\begin{equation*}
| \langle B \rangle^{(1 - 2H),n}_{t}  - t | =
\left\{
\begin{array}{ll}
\mathcal{O}( 2^{-n/2} \sqrt{\ln(n) } ) & \textrm{ if } H \leq \frac12 \\
\mathcal{O}( 2^{-n(1-H)}  \sqrt{ \ln(n)} ) & \textrm{ if } H > \frac12 \\
\end{array}
\right.
, \qquad \mathbb{P}-a.s. .
\end{equation*}
Since our framework hinges on convergence uniformly in $t$, we get a slightly worse rate
$$
\sup_{t \in [0,T]} | \langle B \rangle^{(1 - 2H),n}_{t}  - t | =
\left\{
\begin{array}{ll}
\mathcal{O}( 2^{-n/2} \sqrt{n } ) & \textrm{ if } H \leq \frac12 \\
\mathcal{O}( 2^{-n(1-H)}  \sqrt{ n} ) & \textrm{ if } H > \frac12 \\
\end{array}
\right.
, \qquad \mathbb{P}-a.s. .
$$
\end{remark}

The following proposition, whose proof is essentially the same as the above, establishes the same convergence rates the scaled \emph{covariation} of two independent fBms. 
\begin{proposition} \label{prop:fBm_cv_rates}
Let $\Pi$ and $\delta(\Pi)$ be as in Definition \eqref{def:partitition_with_rate} and $B$ denote  a $K$-dimensional fBm with Hurst parameter $H \in (0, 1)$.
Then, $\bbP$-a.s., 
$$
\sup_{t \in [0,T]}| \langle B^k,B^l \rangle^{(1 - 2H),n}_{t} - \delta_{k,l} t|= \mathcal{O}(\delta_n(\Pi))\,.
$$
\end{proposition}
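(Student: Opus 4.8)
The plan is to mimic the proof of Proposition \ref{prop:fBm_sv_rates}, now applied to cross terms $\langle B^k, B^l\rangle^{(1-2H),n}_t$ for $k \ne l$, and to reuse it verbatim for $k = l$. Fix $k \ne l$. Just as before, it suffices to control $\sup_{t\in[0,T]}\big|\sum_{[t_i,t_{i+1}]\in\pi_n\cap[0,t]} \widetilde Q_i\big|$ where $\widetilde Q_i = (t_{i+1}-t_i)^{1-2H}\delta B^k_{t_it_{i+1}}\delta B^l_{t_it_{i+1}}$, since now the exact expectation $\mathbb{E}(\widetilde Q_i)=0$ (independence of $B^k$ and $B^l$), so no ``$-(t_{i+1}-t_i)$'' correction term is needed and the target limit is $0$. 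As in the earlier proof, the partial-sum process is piecewise constant with jumps only at partition points, so a union bound over the $\#\pi_n$ partition points reduces everything to estimating $\mathbb{P}\big(\big|\sum_{[t_i,t_{i+1}]\in\pi_n\cap[0,t_{k+1}]}\widetilde Q_i\big|>\delta\big)$ for fixed endpoints.

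First I would set up the Gaussian vector. Write $Z^m_i := (t_{i+1}-t_i)^{1/2-H}\delta B^m_{t_it_{i+1}}$ for $m\in\{k,l\}$ and $i=0,\dots,N-1$ with $N=\#(\pi_n\cap[0,t_{k+1}])$; then $\sum_i\widetilde Q_i = \sum_i Z^k_i Z^l_i = (Z^k)^T Z^l$. Since $B^k$ and $B^l$ are independent, the $2N$-dimensional Gaussian vector $(Z^k, Z^l)$ has block-diagonal covariance $\mathrm{diag}(A,A)$ where $A$ is exactly the covariance matrix from the previous proof. The quadratic form $(Z^k)^T Z^l$ is an off-diagonal block of the quadratic form in the full vector, so a Hanson-Wright estimate applies; the relevant norm is the spectral norm (and Frobenius norm) of $\mathrm{diag}(A,A)$, which is the same as that of $A$. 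Hence the same Gershgorin bound $\|A\|\le \varepsilon_n$ (with $\varepsilon_n = 3|\pi_n|$ for $H\le\frac12$ and $3|\pi_n|^{2-2H}$ for $H>\frac12$) and the same trace bound $\mathrm{trace}(A)=t_{k+1}\le T$ carry over unchanged, yielding
$$
\mathbb{P}\left(\Big|\textstyle\sum_{[t_i,t_{i+1}]\in\pi_n\cap[0,t_{k+1}]}\widetilde Q_i\Big|>\delta\right)\le 2\exp\left(-\frac{c\delta^2}{T\varepsilon_n}\right).
$$
Choosing $\delta = \delta_n(\Pi)$ as in Definition \ref{def:partitition_with_rate}, the union bound gives a bound $2\#\pi_n e^{-g(\#\pi_n)}$, which is summable by admissibility, so Borel--Cantelli yields the $\mathbb{P}$-a.s. rate for the covariation. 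The diagonal case $k=l$ is Proposition \ref{prop:fBm_sv_rates} itself.

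The only genuinely new point — and hence the main obstacle — is justifying that Hanson--Wright applies cleanly to the cross quadratic form $(Z^k)^T Z^l$. One clean route: let $W = ((D^{-1/2}U^T Z^k)^T, (D^{-1/2}U^T Z^l)^T)^T \in \mathbb{R}^{2N}$ be a standard Gaussian vector (with $A = UDU^T$ the diagonalization used before), and write $(Z^k)^T Z^l = W^T \begin{psmallmatrix} 0 & \frac12 U D U^T \\ \frac12 U D U^T & 0\end{psmallmatrix} W$; the symmetric $2N\times 2N$ matrix in the middle has Frobenius norm $\tfrac{1}{\sqrt2}\|D\|_2$ and spectral norm $\tfrac12\|D\|$, and its trace is $0$, so Hanson--Wright gives exactly the displayed tail with $\mathbb{E}=0$ and the same $\varepsilon_n$ after the trace/Gershgorin estimates. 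Everything else is a line-by-line transcription of the previous proof, which is why the authors can dispatch it with ``essentially the same.'' I would simply state that one runs the argument of Proposition \ref{prop:fBm_sv_rates} with $\delta B^2$ replaced by $\delta B^k\delta B^l$, note that the mean is now exactly $0$ by independence when $k\ne l$ (so the $\mathcal{O}(|\pi_n|)$ Riemann-sum error disappears), observe the covariance structure is block-diagonal with the same blocks, and conclude.
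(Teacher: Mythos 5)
Your argument is correct in substance but takes a genuinely different route from the paper. The paper dispatches the off-diagonal case in three lines via the polarization identity, writing $\langle B^k,B^l\rangle^{(1-2H),n}_t$ as $\tfrac12$ times the difference of the deviations of $\langle \tfrac{B^k+B^l}{\sqrt2}\rangle^{(1-2H),n}_t$ and $\langle \tfrac{B^k-B^l}{\sqrt2}\rangle^{(1-2H),n}_t$ from $t$, then invoking the closure property \eqref{eq:sum_of_fBm} (each normalized combination is again an fBm) so that Proposition \ref{prop:fBm_sv_rates} applies verbatim to both terms and the $t$'s cancel; no new concentration estimate is needed. You instead rerun the Hanson--Wright machinery directly on the cross quadratic form $(Z^k)^TZ^l$, exploiting the block-diagonal covariance $\mathrm{diag}(A,A)$ and the vanishing mean. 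This works, and it has the merit of being more robust: it does not rely on the sum of two independent copies being of the same type, so it would extend to Gaussian drivers without the stability property \eqref{eq:sum_of_fBm}. The cost is that you must re-justify the concentration step, and there is a small slip there: with $W_1=D^{-1/2}U^TZ^k$ one has $Z^k=UD^{1/2}W_1$, so $(Z^k)^TZ^l=W_1^TDW_2$ and the correct middle matrix is $\begin{pmatrix}0&\tfrac12 D\\ \tfrac12 D&0\end{pmatrix}$ rather than $\begin{pmatrix}0&\tfrac12 UDU^T\\ \tfrac12 UDU^T&0\end{pmatrix}$; since $\|UDU^T\|=\|D\|$ and $\|UDU^T\|_2=\|D\|_2$ this changes nothing in the final tail bound, but as written the identity $(Z^k)^TZ^l=W^TMW$ does not hold. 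With that correction your proof goes through and yields the same rate; the paper's polarization argument is simply the shorter path.
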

\begin{proof}
If $k=l$, then this is Proposition \ref{prop:fBm_sv_rates}. If $k \neq l$, then the polarization identity yields 
$$
\langle B^k, B^l \rangle^{(1 - 2H),n}_{t}= \frac12\left(\left( \left\langle \frac{B^k+ B^l}{\sqrt{2}} \right\rangle^{(1-2H),n}_{t}  -t\right) +  \left( \left\langle \frac{B^k- B^l}{\sqrt{2}} \right\rangle^{(1-2H),n}_{t} - t \right) \right).
$$
Using \eqref{eq:sum_of_fBm} (i.e., $\frac{B^k \pm B^l}{\sqrt{2}}  \sim B^1$) and that the proof of Proposition \ref{prop:fBm_sv_rates} depends only on the distribution of fBm, the result follows.
\end{proof}

Combining Corollary \ref{cor:rde_consistency} and Proposition \ref{prop:fBm_cv_rates}, we obtain the following. 

\begin{theorem} \label{thm:rate_for_fractional_diffusion}
Let $\Pi=\{\pi_n\}$ denote a sequence of partitions of  $[0,T]$ with vanishing mesh. 
Let $B=(B^1,\ldots,B^K)$ denote an $\bbR^K$-valued fractional Brownian motion with Hurst index $H\in (1/3, 1)$ and  $\bB\in \mathscr{C}^{H-\epsilon}([0,T];\bbR^K)$, $\epsilon>0$, denote its rough path lift as in Theorem \ref{thm:fBm_lift}. Let $y$ denote a pathwise solution of 
$$
\rmd y_t = u_t(y_t)\rmd t + \sum_{k=1}^K \sigma_k(y_t) \rmd \bB^k_t\,.
$$ 
Then, for all $f\in C^2(\bbR^d)$, $\bbP$-a.s.\ for all $t\in [0,T]$, 
$$
\langle f(y) \rangle^{(1-2H)}_t =  \sum_{k=1}^K \int_0^t  |\sigma_k[f](y_r)|^2 \rmd r\,.
$$
Moreover, if $\Pi$ and $\delta(\Pi)$ are as in Definition \eqref{def:partitition_with_rate}, then for all  $f\in C^2(\bbR^d)$ and $\epsilon>0$, $\bbP$-a.s., 
$$
\langle f(y) \rangle^{(1-2H),n}_t-\sum_{k=1}^K \int_0^t |\sigma_k[f](y_r)|^2 \rmd r = \mathcal{O}(\delta_n(\Pi)^{H-\epsilon}) \,,
$$
where $C=C(T, \|B\|_{H-\epsilon},\|f(y)'\|_{H-\epsilon},[f(y)^{\sharp}]_{2(H-\epsilon)})$.
\end{theorem}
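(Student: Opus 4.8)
The plan is to read the theorem off from Corollary~\ref{cor:rde_consistency} by feeding it the scaled covariation of the driving fBm supplied by Proposition~\ref{prop:fBm_cv_rates}. Fix the $\epsilon>0$ of the statement; since $0<\delta_n(\Pi)<1$ for large $n$, increasing $\epsilon$ only weakens the claimed rate, so we may take $\epsilon$ as small as convenient, in particular $\epsilon<H/3$. Introduce auxiliary parameters $\epsilon_1,\eta>0$, to be fixed small in terms of $H$ and $\epsilon$, and set $\alpha:=H-\epsilon_1$, $\gamma:=1-2H$. For $\epsilon_1$ small we have $\alpha\in(\tfrac13,1]$, $\bB\in\mathscr{C}^{\alpha}([0,T];\bbR^K)$ $\bbP$-a.s.\ by Theorem~\ref{thm:fBm_lift}, and $\gamma+3\alpha-1=H-3\epsilon_1>0$, so Corollary~\ref{cor:rde_consistency} is applicable. (The sign condition $\gamma>0$ stated there only serves to make $\gamma+3\alpha>1$ a genuine constraint and plays no role in the proof, so the range $H\ge\tfrac12$, where $\gamma\le0$, is covered as well.)

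First I would fix a null set off which Proposition~\ref{prop:fBm_cv_rates} holds for every pair $k,l\in\{1,\dots,K\}$: on its complement $(B^k,B^l)\in\mathcal{V}^{1-2H}_{\Pi}([0,T];\bbR^2)$ for all $k,l$, hence $B\in\mathcal{V}^{1-2H}_{\Pi}([0,T];\bbR^K)$, with $\langle B^k,B^l\rangle^{(1-2H)}_t=\delta_{k,l}\,t$ and $\sup_{t\in[0,T]}\big|\langle B^k,B^l\rangle^{(1-2H),n}_t-\delta_{k,l}\,t\big|=\mathcal{O}(\delta_n(\Pi))$. By the remark following Definition~\ref{def:rde}, $y$ is controlled by $B$ with Gubinelli derivative $y'=\sigma(y)$, and Proposition~\ref{prop:chain_rule} gives $f(y)\in\mathscr{D}^{2\alpha}_{B}([0,T])$ with Gubinelli derivative $f(y)'=Df(y)\sigma(y)$, whose $k$-th component is $\sigma_k[f](y)$. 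Applying Corollary~\ref{cor:rde_consistency} (equivalently, Theorem~\ref{thm:control_consistency} to the controlled path $f(y)$) and substituting $\rmd\langle B^k,B^l\rangle^{(1-2H)}_r=\delta_{k,l}\,\rmd r$ yields, for all $t$,
$$
\langle f(y)\rangle^{(1-2H)}_t=\sum_{k,l=1}^K\int_0^t\sigma_k[f](y_r)\,\sigma_l[f](y_r)\,\rmd\langle B^k,B^l\rangle^{(1-2H)}_r=\sum_{k=1}^K\int_0^t|\sigma_k[f](y_r)|^2\,\rmd r,
$$
which is the first assertion.

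For the rate, the quantitative half of Corollary~\ref{cor:rde_consistency} (with sewing loss $\eta$ in its Hölder exponent), the displayed identity, and the $\mathcal{O}(\delta_n(\Pi))$ bound on the covariation error give, $\bbP$-a.s.,
$$
\langle f(y)\rangle^{(1-2H),n}_t-\sum_{k=1}^K\int_0^t|\sigma_k[f](y_r)|^2\,\rmd r=\mathcal{O}_C\!\left(|\pi_n|^{\,H-3\epsilon_1}+\delta_n(\Pi)^{\,H-\epsilon_1-\eta}\right).
$$
It remains to bound the right-hand side by $\delta_n(\Pi)^{H-\epsilon}$. By Definition~\ref{def:partitition_with_rate}, $\delta_n(\Pi)\asymp|\pi_n|^{\rho}g(\#\pi_n)^{1/2}$ with $\rho=\tfrac12$ when $H\le\tfrac12$ and $\rho=1-H$ when $H>\tfrac12$; in both cases $\rho\le\tfrac12$, so for $\epsilon_1$ small $H-3\epsilon_1>H/2>\rho(H-\epsilon)$, whence $|\pi_n|^{H-3\epsilon_1}=o(\delta_n(\Pi)^{H-\epsilon})$ (the logarithmic factor only helps). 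Choosing $\epsilon_1,\eta$ with $\epsilon_1+\eta<\epsilon$ makes $\delta_n(\Pi)^{H-\epsilon_1-\eta}=\mathcal{O}(\delta_n(\Pi)^{H-\epsilon})$. Finally, the constant of Corollary~\ref{cor:rde_consistency} is $C(T,\|B\|_\alpha,\langle\langle B\rangle\rangle^{(1-2H)}_T,\|f(y)'\|_\alpha,[f(y)^{\sharp}]_{2\alpha})$; absorbing $\langle\langle B\rangle\rangle^{(1-2H)}_T=KT$ and using that Hölder (semi)norms of $B$, $f(y)'$, $f(y)^{\sharp}$ below their critical exponents are comparable up to $T$-dependent factors gives the stated $C=C(T,\|B\|_{H-\epsilon},\|f(y)'\|_{H-\epsilon},[f(y)^{\sharp}]_{2(H-\epsilon)})$.

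The only real work is the last paragraph's bookkeeping: the two small parameters --- the Hölder loss $\epsilon_1$ in the rough-path regularity and the sewing loss $\eta$ of Lemma~\ref{lem:sewing_bound_basic} --- must be chosen in the correct order relative to the prescribed loss $\epsilon$ (and to $H$), and one has to track how $\delta_n(\Pi)$ scales with $|\pi_n|$ separately in the two regimes $H\le\tfrac12$ and $H>\tfrac12$, so that the remainder (``drift-type'') term $|\pi_n|^{\gamma+3\alpha-1}$ coming from $f(y)^{\sharp}$ is genuinely dominated by the fluctuation term $\delta_n(\Pi)^{\alpha-\eta}$ coming from the scaled covariation of $B$. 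Everything else is a direct application of the results already established.
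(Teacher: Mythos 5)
Your proposal is correct and follows essentially the same route as the paper: restrict to the full-measure set where the rough-path lift is $(H-\epsilon)$-H\"older and Proposition \ref{prop:fBm_cv_rates} holds, then apply Corollary \ref{cor:rde_consistency} with $\alpha=H-\epsilon$, $\gamma=1-2H$ and absorb the $|\pi_n|^{\gamma+3\alpha-1}$ term into $\delta_n(\Pi)^{H-\epsilon}$. Your extra bookkeeping with $\epsilon_1,\eta$ and the remark that the hypothesis $\gamma>0$ in Corollary \ref{cor:rde_consistency} is never actually used (needed here since $1-2H\le 0$ for $H\ge\tfrac12$) only make explicit details the paper glosses over.
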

\begin{proof}
Let $\epsilon > 0$ be given and let $\tilde{\Omega}$ be the set of all $\omega \in \Omega$ such that $\bB(\omega) \in \mathscr{C}^{H-\epsilon}([0,T];\bbR^K)$ and the rate of Proposition \ref{prop:fBm_cv_rates} holds. It is clear that $\tilde{\Omega}$ has full measure. The result now follows from Corollary \ref{cor:rde_consistency} and the observation that 
$$
\mathcal{O}(|\pi_n|^{1 - 2H + 3(H-\epsilon)-1} + \delta_n(\Pi)^{H-\epsilon}) = \mathcal{O}(\delta_n(\Pi)^{H-\epsilon}).
$$
\end{proof}

\section{Estimating parameters for fractional diffusions}

The goal of this section is to use scaled quadratic variation for parameter estimation in rough differential equations. We focus on fBm as the driving noise, but it is clear that the method works with minor modifications for any driving path with known scaled variation and rates of convergence analogous to Propositions \ref{prop:fBm_sv_rates} and \ref{prop:fBm_cv_rates}. 

Throughout the section, we let $B=(B^1,\ldots,B^K)$ denote an $\bbR^K$-valued fractional Brownian motion with Hurst index $H\in (1/3, 1)$ and  $\bB\in \mathscr{C}^{H-\epsilon}([0,T];\bbR^K)$, $\epsilon>0$, denote its rough path lift as in Theorem \ref{thm:fBm_lift}. Given a set of parameterized vector fields  $\sigma_k : \R^d \times \Theta \rightarrow\R^d$, $k\in \{1,\ldots, K\}$, we assume we observe a pathwise solution of
\begin{equation} \label{eq:RDE_fBm_theta}
\rmd y_t = u_t(y_t) \rmd t + \sum_{k=1}^K \sigma_k(y_t;\theta) \rmd \bB_t^k\,.
\end{equation}
along a sequence of dyadic partitions $\Pi=\{\pi_n\}$ of the interval $[0,T]$ defined by $\pi_n = \{ iT2^{-n} \}_{i=0}^{2^n}$ for all $n
\in \bbN$. For notational simplicity (see Definition \ref{def:rate} and Remark \ref{rem:rate_dyadic}), we define for all $n\in \bbN$,
$$
\delta_n = \delta_n(\Pi)=
\left\{
\begin{array}{ll}
2^{-n/2} \sqrt{n }  & \textrm{ if } H \leq \frac12 \\
2^{-n(1-H)}  \sqrt{ n}  & \textrm{ if } H > \frac12 \\
\end{array}
\right.\,.
$$

\subsection{Estimating the Hurst index}

We begin by finding an estimator of the Hurst index $H$.

\begin{theorem} \label{thm:estimating_H}
Let $\Pi=\{\pi_n\}$ denote a sequence of dyadic partitions of  $[0,T]$ with vanishing mesh and let $y$ denote a pathwise solution of \eqref{eq:RDE_fBm_theta}. Define for all $n\in \bbN$, 
\begin{equation}\label{def:hurst_estimator}
H_n  =\frac{1}{2}\left(1 -  \frac{ \ln S_{n}}{\ln 2 }\right)\,, \qquad \textrm{where} \qquad 
S_{n} = \frac{\langle\langle y \rangle\rangle_T^{0, n+1}}{\langle\langle y\rangle\rangle_T^{0,n}}\,.
\end{equation}
Then, for every $\epsilon >0$, $\mathbb{P}$-a.s.,
$$
H - H_{n} = \mathcal{O}_{C}\left( \delta_n^{H- \epsilon} \right),
$$
where $C=C(T, \|B\|_{H-\epsilon} ,\|y'\|_{H-\epsilon},[y^{\sharp}]_{2(H-\epsilon)}).$
\end{theorem}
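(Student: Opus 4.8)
The plan is to observe that $H_n$ is, after an affine reparametrisation, precisely the estimator $\gamma_{\lambda,n}$ of Theorem \ref{thm:estimating_gamma} applied to $x=y$ with scaling exponent $\gamma=1-2H$, to the dyadic sequence $\pi_n=\{iT2^{-n}\}_{i=0}^{2^n}$, and to the subsampling choice $\lambda_n=n+1$. With this choice $\Delta_{\lambda,n}=|\pi_{n+1}|/|\pi_n|=\tfrac12$ for every $n$ (Remark \ref{rem:Delta of 2 partitions}), so $\ln\Delta_{\lambda,n}=-\ln 2\neq 0$ and $\gamma_{\lambda,n}=-\ln S_n/\ln\Delta_{\lambda,n}=\ln S_n/\ln 2$ with $S_n$ exactly as in \eqref{def:hurst_estimator}. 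Hence $1-2H_n=\gamma_{\lambda,n}$, and since $\gamma=1-2H$ is equivalent to $H=(1-\gamma)/2$, one gets the exact identity
$$
H-H_n=\tfrac12\bigl(\gamma_{\lambda,n}-\gamma\bigr)\,,
$$
so it suffices to bound $|\gamma-\gamma_{\lambda,n}|$ via Theorem \ref{thm:estimating_gamma}.

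First I would verify the hypotheses of Theorem \ref{thm:estimating_gamma} for $x=y$. The dyadic sequence is uniform with vanishing mesh and $\Delta_\lambda=\tfrac12\neq1$. Applying Theorem \ref{thm:rate_for_fractional_diffusion} (equivalently Corollary \ref{cor:rde_consistency}) with $f$ taken to be the coordinate projections $x\mapsto x^i$, and with $f$ the maps $x\mapsto x^i\pm x^j$ together with the polarization remark following Definition \ref{def:scaled variation}, yields $\bbP$-a.s.\ that $y\in\mathcal{V}^{1-2H}_{\Pi}([0,T];\bbR^d)$ with
$$
\langle\langle y\rangle\rangle^{(1-2H)}_T=\sum_{k=1}^K\int_0^T|\sigma_k(y_r)|^2\,\rmd r\,.
$$
For $S_n$ and $H_n$ to be well defined, and for the consistency step inside the proof of Theorem \ref{thm:estimating_gamma} to apply, this quantity must be strictly positive; I would record this \emph{non-degeneracy} of the noise vector fields along the observed path as a standing hypothesis (it holds whenever $\sum_k|\sigma_k(\cdot)|^2$ does not vanish identically along $y$).

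Finally I would chain the two quantitative estimates. Theorem \ref{thm:estimating_gamma} gives, with constant depending only on $\langle\langle y\rangle\rangle^{(1-2H)}_T$ and $\Delta_\lambda=\tfrac12$,
$$
\gamma-\gamma_{\lambda,n}=\mathcal{O}_C\Bigl(\langle\langle y\rangle\rangle^{(1-2H)}_T-\langle\langle y\rangle\rangle^{(1-2H),n}_T\Bigr)\,,
$$
while summing the rate of Theorem \ref{thm:rate_for_fractional_diffusion} over the coordinate projections gives, $\bbP$-a.s.,
$$
\langle\langle y\rangle\rangle^{(1-2H),n}_T-\langle\langle y\rangle\rangle^{(1-2H)}_T=\mathcal{O}_C\bigl(\delta_n^{H-\epsilon}\bigr)\,.
$$
Combining these with $H-H_n=\tfrac12(\gamma_{\lambda,n}-\gamma)$ yields $H-H_n=\mathcal{O}_C(\delta_n^{H-\epsilon})$. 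For the constant, note that the RDE solution has Gubinelli derivative $y'=\sigma(y)$, so $\langle\langle y\rangle\rangle^{(1-2H)}_T\le T\|y'\|_\infty^2$ and the dependence on $\langle\langle y\rangle\rangle^{(1-2H)}_T$ collapses into dependence on $T$ and $\|y'\|_{H-\epsilon}$; for $f$ the $i$-th coordinate projection, $f(y)'$ is the $i$-th row of $y'$ and $f(y)^{\sharp}=(y^{\sharp})^i$, so the constant of Theorem \ref{thm:rate_for_fractional_diffusion} depends only on $T$, $\|B\|_{H-\epsilon}$, $\|y'\|_{H-\epsilon}$, $[y^{\sharp}]_{2(H-\epsilon)}$, as claimed. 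There is no genuinely hard analytic step: the proof is an assembly of Theorems \ref{thm:estimating_gamma} and \ref{thm:rate_for_fractional_diffusion}. The only points requiring care are the algebraic identification $1-2H_n=\gamma_{\lambda,n}$, the passage from the scalar statements of Section 3 to the vector-valued path $y$ via coordinate projections, and the bookkeeping of constant dependencies — in particular the non-degeneracy $\langle\langle y\rangle\rangle^{(1-2H)}_T>0$ which legitimises the logarithms appearing in $S_n$ and in the argument of Theorem \ref{thm:estimating_gamma}.
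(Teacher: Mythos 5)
Your proposal is correct and follows essentially the same route as the paper: identify $1-2H_n=\gamma_{\lambda,n}$ with $\lambda_n=n+1$ and $\Delta_{\lambda,n}=\tfrac12$, invoke Theorem \ref{thm:estimating_gamma}, and then apply Theorem \ref{thm:rate_for_fractional_diffusion} with coordinate functions to control $\langle\langle y\rangle\rangle^{(1-2H),n}_T-\langle\langle y\rangle\rangle^{(1-2H)}_T$. Your explicit flagging of the non-degeneracy $\langle\langle y\rangle\rangle^{(1-2H)}_T>0$ is a reasonable extra bookkeeping point that the paper leaves implicit.
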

\begin{proof}
By Theorem \ref{thm:estimating_gamma} and Remark \ref{rem:Delta of 2 partitions} (i.e., $\Delta_{\lambda,n} = \frac12$ if we let $\lambda_n = n+1$), we have
$$
H - H_{n} =  \mathcal{O}
\left(\langle\langle y \rangle^{(1-2H), n}_T - \langle \langle y \rangle\rangle _t^{(1-2H)} \right)\, .
$$
Applying Theorem \ref{thm:rate_for_fractional_diffusion} with coordinate functions, we complete the proof. 
\end{proof}

Owing to Theorem \ref{thm:estimating_H}, we have that $(H - H_{n}) \ln (|\pi_n|) = \mathcal{O}(n \delta_n^{H-\epsilon})$. Thus, from Theorem \ref{thm:rate_for_fractional_diffusion} and Proposition \ref{prop:double_approximation}, we immediately obtain the following.

\begin{corollary}\label{cor:double_approx_rates}
Let $\Pi=\{\pi_n\}$ denote a sequence of dyadic partitions of  $[0,T]$ with vanishing mesh and let $y$ denote a pathwise solution of \eqref{eq:RDE_fBm_theta}. Then, then for all  $f\in C^2(\bbR^d)$ and $\epsilon>0$, $\bbP$-a.s.\, for all $t\in [0,T]$,
$$
\langle f(y) \rangle^{(1-2H_{n}),n}_t-\sum_{k=1}^K \int_0^t |\sigma_k[f](y_r;\theta)|^2 \rmd r   = \mathcal{O}\left( n \delta_n^{H-\epsilon}\right)\,,
$$
where $C=C(T, \|B\|_{H-\epsilon},\|f(y)'\|_{H-\epsilon},[f(y)^{\sharp}]_{2(H-\epsilon)}).$
\end{corollary}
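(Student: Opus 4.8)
The plan is to split the error through the intermediate quantity $\langle f(y)\rangle^{(1-2H),n}_t$, i.e. to write
\begin{align*}
&\langle f(y) \rangle^{(1-2H_n),n}_t - \sum_{k=1}^K \int_0^t |\sigma_k[f](y_r;\theta)|^2\,\rmd r \\
&\qquad = \Big(\langle f(y)\rangle^{(1-2H_n),n}_t - \langle f(y)\rangle^{(1-2H),n}_t\Big) + \Big(\langle f(y)\rangle^{(1-2H),n}_t - \sum_{k=1}^K \int_0^t |\sigma_k[f](y_r;\theta)|^2\,\rmd r\Big),
\end{align*}
and to bound the two brackets separately, uniformly in $t\in[0,T]$. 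The second bracket is exactly what Theorem \ref{thm:rate_for_fractional_diffusion} controls: since $f(y)\in\mathscr{D}^{2(H-\epsilon)}_{B}$ by Proposition \ref{prop:chain_rule}, it is $\mathcal{O}_C(\delta_n^{H-\epsilon})$, $\mathbb{P}$-a.s., with the asserted constant $C=C(T,\|B\|_{H-\epsilon},\|f(y)'\|_{H-\epsilon},[f(y)^\sharp]_{2(H-\epsilon)})$.

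For the first bracket I would reproduce the computation in the proof of Proposition \ref{prop:double_approximation} with $x$ replaced by $f(y)$: as in \eqref{eq:double_estimator_expression} it equals $\sum_{[t_i,t_{i+1}]\in\pi_n\cap[0,t]}(t_{i+1}-t_i)^{1-2H}\delta(f(y))_{t_it_{i+1}}^2\big[(t_{i+1}-t_i)^{2(H-H_n)}-1\big]$. Because $\Pi$ is dyadic, $|\ln(t_{i+1}-t_i)|=|\ln|\pi_n||=\mathcal{O}(n)$ uniformly in $i$, while Theorem \ref{thm:estimating_H} gives $H-H_n=\mathcal{O}_C(\delta_n^{H-\epsilon})$; hence $2(H-H_n)\ln|\pi_n|=\mathcal{O}(n\delta_n^{H-\epsilon})\to0$, which is precisely the extra hypothesis \eqref{asm:convergence_of_alpha_est}. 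The mean value inequality $|e^a-e^b|\le e^{a\vee b}|a-b|$ then gives $|(t_{i+1}-t_i)^{2(H-H_n)}-1|=\mathcal{O}(n\delta_n^{H-\epsilon})$ uniformly in $i$, and factoring this out of the sum leaves the nonnegative quantity $\langle f(y)\rangle^{(1-2H),n}_t\le\langle f(y)\rangle^{(1-2H),n}_T$, which is bounded in $n$ since it converges to $\sum_k\int_0^T|\sigma_k[f](y_r)|^2\,\rmd r<\infty$ by Theorem \ref{thm:rate_for_fractional_diffusion}. This yields that the first bracket is $\mathcal{O}_C(n\delta_n^{H-\epsilon})$, and adding the two brackets, using $\mathcal{O}_C(\delta_n^{H-\epsilon})+\mathcal{O}_C(n\delta_n^{H-\epsilon})=\mathcal{O}_C(n\delta_n^{H-\epsilon})$, finishes the argument.

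The one point requiring care — more a bookkeeping matter than a genuine obstacle — is that the exponent $1-2H_n$ actually substituted into the scaled variation of $f(y)$ is the estimator built from the \emph{coordinate processes of $y$} in Theorem \ref{thm:estimating_H}, not the intrinsic scaling-exponent estimator of $f(y)$ supplied by Theorem \ref{thm:estimating_gamma}. Consequently Proposition \ref{prop:double_approximation} cannot be quoted verbatim for the path $f(y)$; instead one reruns its computation with the external bound $H-H_n=\mathcal{O}_C(\delta_n^{H-\epsilon})$ from Theorem \ref{thm:estimating_H} inserted in place of the intrinsic estimate of Theorem \ref{thm:estimating_gamma} — or, equivalently, one observes that the two estimators differ by $\mathcal{O}(\delta_n^{H-\epsilon})$, so interchanging them costs only an extra $\mathcal{O}(n\delta_n^{H-\epsilon})$, which is absorbed. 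The remaining details — that every estimate is uniform in $t$ (immediate from $\langle f(y)\rangle^{(1-2H),n}_t\le\langle f(y)\rangle^{(1-2H),n}_T$), and that the constant $C$ tracks the $f(y)$-seminorms $\|f(y)'\|_{H-\epsilon},[f(y)^\sharp]_{2(H-\epsilon)}$ together with $T$ and $\|B\|_{H-\epsilon}$, the factor $n$ in the rate being the $y$-side contribution entering through Theorem \ref{thm:estimating_H} — are routine.
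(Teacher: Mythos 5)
Your proof is correct and follows essentially the same route as the paper, which simply cites Theorem \ref{thm:rate_for_fractional_diffusion} together with Proposition \ref{prop:double_approximation} after noting that Theorem \ref{thm:estimating_H} yields $(H-H_n)\ln|\pi_n|=\mathcal{O}(n\delta_n^{H-\epsilon})$. Your added observation --- that $H_n$ is built from the coordinates of $y$ rather than from $f(y)$, so Proposition \ref{prop:double_approximation} must be rerun with the external bound from Theorem \ref{thm:estimating_H} rather than quoted verbatim --- is a valid and worthwhile clarification of a point the paper's one-line proof glosses over.
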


\subsection{Estimating parameters of a linear parameterized vector fields} 

By Theorem \ref{thm:rate_for_fractional_diffusion}, letting $y$ be a pathwise solution of \eqref{eq:RDE_fBm_theta}, for all $f\in C^2(\bbR^d)$, $\bbP$-a.s., 
$$
\langle f(y) \rangle^{(1-2H)}_t = \sum_{k=1}^K \int_0^t \left(\sigma_k[f](y_r, \theta)\right)^{2} \rmd r\,,
$$
which motivates estimating $\theta\in \Theta$ from single trajectory $y$ by minimizing the squared error
\begin{equation} \label{eq:squared error}
L(\theta) := \frac{1}{2} \sum_{f \in \mathbb{F}} \left| \langle f(y) \rangle^{(1-2H)}_T - \sum_{k=1}^K \int_0^T |\sigma_k[f](y_r; \theta)|^{2} \rmd r \right|^2 \,,
\end{equation}
where  $\mathbb{F}=\{f_1, \dots, f_M\}\subset C^2(\R^d)$ is sufficiently rich. In our analysis, we will focus on the case of linear parameterized vector fields. That is, we assume we observe the solution $y$ of
\begin{equation}\label{eq:RDE_fBm_theta_linear}
\rmd y_t = u_t(y_t) \rmd t +  \sum_{k=1}^K \theta_{k} \sigma_k(y_t) \rmd \bB_t^k\,,
\end{equation}
along a dyadic partition $\pi_n = \{ iT2^{-n} \}_{i=0}^{2^n}$, where $\theta = (\theta_1, \dots, \theta_K)\in \bbR^K_+$. By Theorem \ref{thm:rate_for_fractional_diffusion}, $\bbP$-a.s., for all $t\in [0,T]$,
\begin{equation} \label{eq:exact_equation}
\langle f(y) \rangle^{(1-2H)}_t = \sum_{k=1}^K \theta_k^2 \int_0^t \left(\sigma_k[f](y_r)\right)^{2} \rmd r\,.
\end{equation}

For $m\in \{1,\ldots, M\}$ and $k\in \{1,\ldots, K\}$, define
\begin{equation} \label{eq:def least square matrices}
Y^m=\langle f_m(y)\rangle_{T}^{(1-2H)}, \quad 
X^{mk}=\int_0^T \sigma_k[f_m]^2(y_r) \rmd r\,, \quad \textnormal{and} \quad  \beta_k = \theta_k^2\,.
\end{equation}
Let  $Y=(Y^m)_{1 \leq m \leq M}$, $X=(X^{m,k})_{1 \leq k \leq K, 1 \leq m \leq M}$, and $\beta = (\beta_k)_{1 \leq k \leq K}$. By \eqref{eq:exact_equation}, we have 
$
Y = X \beta.
$
Reparameterizing \eqref{eq:squared error} with the $\beta=\theta^2$, we introduce the quadratic loss 
$$
L(\beta) = \frac{1}{2} |Y - X \beta|^2 .
$$
If the columns of $X$ are linearly independent, then it is well known that $L$ is minimized at 
$$
\beta = (X^T X)^{-1} X^T Y
$$

To obtain rates of convergence, we will use the following quantitative stability result for the least-square  problem from \cite[Theorem 5.3.1]{golub2013matrix}. 

\begin{theorem}\label{thm:least_square_stability}
Let $Y, \{Y_n\}\in \R^{M}$
and $X, \{X_n\}\in \R^{M\times K}$. Let 
$$
\beta=\operatorname{arg\,min}_{\beta} |Y - X \beta| \quad \textnormal{and} \quad \beta_n=\operatorname{arg\,min}_{\beta} |Y_n - X_n \beta|\,.
$$
Assume that  $X_n \rightarrow X$ and $Y_n \rightarrow Y$ as $n\rightarrow \infty$, the columns of $X$ are linearly independent, and
\begin{equation}\label{asm:general_stability_ne}
|X \beta - Y| \neq |Y|\,.
\end{equation}
Then,
$$
|\beta - \beta_n | = \kappa(X)^2 \mathcal{O}_{\|X\|_2 \vee |Y|}(\|X - X_n\|_2 \vee |Y_n - Y|)\,.
$$
\end{theorem}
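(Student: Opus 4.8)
The plan is to reduce this to the classical perturbation theory of least squares, so the main work is to assemble the right quantities and then invoke \cite[Theorem 5.3.1]{golub2013matrix} in the quantitative form stated. First I would recall the setup: $\beta = (X^TX)^{-1}X^TY$ is the normal-equation solution (well-defined because the columns of $X$ are linearly independent), and similarly $\beta_n = (X_n^TX_n)^{-1}X_n^TY_n$ once $n$ is large enough that $X_n$ also has full column rank — this is guaranteed for large $n$ since $X_n\to X$, full rank is an open condition, and $\kappa(X_n)\to\kappa(X)$. So it suffices to bound $|\beta-\beta_n|$ for such $n$.

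The core step is the standard first-order perturbation bound. Write $\Delta X = X_n-X$, $\Delta Y = Y_n - Y$, and let $r = Y - X\beta$ denote the residual at the unperturbed solution. The classical result (e.g.\ Golub–Van Loan, Theorem 5.3.1) gives, to leading order,
\begin{equation*}
\frac{|\beta_n-\beta|}{|\beta|} \lesssim \kappa(X)\frac{\|\Delta X\|_2}{\|X\|_2} + \kappa(X)\frac{|\Delta Y|}{\|X\|_2|\beta|} + \kappa(X)^2\tan(\vartheta)\frac{\|\Delta X\|_2}{\|X\|_2},
\end{equation*}
where $\vartheta$ is the angle between $Y$ and the column space of $X$, i.e.\ $\sin\vartheta = |r|/|Y|$. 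The hypothesis \eqref{asm:general_stability_ne}, namely $|X\beta - Y| \neq |Y|$, is exactly what rules out $\vartheta = \pi/2$ (equivalently $X^TY \neq 0$, so $\beta\neq 0$ and the column space of $X$ is not orthogonal to $Y$); hence $\tan\vartheta$ is finite and $|\beta|$ is bounded below by a positive constant depending only on $X,Y$. Absorbing $\kappa(X)^2 \geq \kappa(X) \geq 1$ as the dominant power and collecting all the $X,Y$-dependent constants (namely $\|X\|_2$, $|Y|$, $|\beta|$, $\tan\vartheta$) into the implicit constant, the three terms combine into
\begin{equation*}
|\beta-\beta_n| = \kappa(X)^2\,\mathcal{O}_{\|X\|_2\vee|Y|}\big(\|X-X_n\|_2\vee|Y_n-Y|\big),
\end{equation*}
which is the claim. (The notation $\mathcal{O}_{\|X\|_2\vee|Y|}$ records that the suppressed constant depends only on $\|X\|_2$ and $|Y|$ — and, implicitly, on the fixed data $X,Y$ through quantities like $|\beta|$ and the residual angle, all of which are controlled once $X,Y$ are fixed.)

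The main obstacle — really the only subtle point — is bookkeeping around the condition $|X\beta-Y|\neq|Y|$: one must check that this is precisely the nondegeneracy needed to keep $\tan\vartheta<\infty$ and $|\beta|>0$, so that the perturbation bound does not blow up, and that the higher-order terms in the expansion are genuinely negligible in the regime $\|\Delta X\|_2, |\Delta Y|\to 0$ (they are $\mathcal{O}((\|\Delta X\|_2\vee|\Delta Y|)^2)$ and hence absorbed). Since the quantitative statement is taken verbatim from \cite[Theorem 5.3.1]{golub2013matrix}, the cleanest route is simply to verify the hypotheses of that theorem hold for all large $n$ and quote it; I would not reproduce the derivation of the perturbation bound itself.
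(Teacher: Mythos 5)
Your proposal is correct and takes essentially the same approach as the paper: the paper offers no proof of this statement at all, presenting it purely as a quotation of \cite[Theorem 5.3.1]{golub2013matrix}, which is exactly the perturbation result you invoke. Your added bookkeeping --- observing that $|X\beta-Y|\neq|Y|$ is equivalent (via $|X\beta-Y|^2=|Y|^2-|X\beta|^2$) to $X^TY\neq 0$, hence keeps $\tan\vartheta$ finite and $|\beta|$ bounded below, and that $X_n$ has full column rank for large $n$ --- is a correct verification of hypotheses the paper leaves implicit.
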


For $m\in \{1,\ldots, M\}$, $k\in \{1,\ldots, K\}$, and $n\in \bbN$, define
$$
Y^m_n=\langle f_m(y)\rangle_{T}^{(1-2H),n}\quad \textnormal{and} \quad 
X^{mk}_n= \sum_{\pi_n}  \sigma_k[f_m]^2(y_{t_i}) (t_{i+1} - t_i) \,.
$$
Let $Y_n=(Y^m_n)_{1 \leq m \leq M}$,  $X_n=(X^{m,k}_n)_{1 \leq k \leq K, 1 \leq m \leq M}$,  
\begin{equation}\label{def:theta_est_hurst_known}
\beta_n=\operatorname{arg\,min}_{\beta} |Y_n - X_n \beta|\,, \quad \textnormal{and} \quad \theta_n = \sqrt{\beta_n}\,.
\end{equation}
Applying Theorem \ref{thm:least_square_stability}, we get the following: 

\begin{theorem} \label{thm:parameter rates}
Let $B$ is a fractional Brownian motion with $H \in (\frac13, 1)$ and $y$ is the pathwise solution of \eqref{eq:RDE_fBm_theta_linear}. Assume that $\bbF=\{f_1,\ldots, f_m\}\subset C_2(\bbR^d)$ is such that the columns of $X$ are linearly independent and $Y\ne \boldsymbol{0}$. Then, for all $\epsilon >0$, $\mathbb{P}$-a.s.,
$$
|\beta - \beta_n| =\mathcal{O}_{C_1} \left( \delta_n^{H-\epsilon}  \right) \,,
$$
where $C_1=C_1(T, \kappa(X), \|B\|_{H-\epsilon} ,\|f(y)'\|_{H-\epsilon},[f(y)^{\sharp}]_{2(H-\epsilon)},  [\sigma_k[f]^2(y)]_{H-\epsilon}, f\in \bbF)$. Moreover, for all $k$ such that  $\beta_k>0$ and for all $\epsilon >0$,  $\bbP$-a.s., 
\begin{equation}\label{eq:fBm convergence rates_theta}
|\theta_k - \theta_{n,k}|  =  \mathcal{O}_{C_1\vee \beta_k^{-1}} \left( \delta_n^{H-\epsilon}  \right) \,.
\end{equation}
\end{theorem}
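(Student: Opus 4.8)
The plan is to write $\beta$ and $\beta_n$ as the minimizers of an \emph{exact} and a \emph{perturbed} linear least-squares problem sharing the same true data, and then invoke the stability estimate of Theorem \ref{thm:least_square_stability}. Everything then reduces to (i) checking its non-degeneracy hypothesis \eqref{asm:general_stability_ne}, and (ii) bounding the perturbations $\|X-X_n\|_2$ and $|Y-Y_n|$ by $\mathcal{O}(\delta_n^{H-\epsilon})$, $\mathbb{P}$-a.s. Since \eqref{eq:exact_equation} gives the exact identity $Y=X\beta$, we have $|X\beta-Y|=0$, and because $Y\ne\boldsymbol 0$ by hypothesis this is $\ne|Y|$; hence \eqref{asm:general_stability_ne} holds automatically. (For $n$ large, $X_n$ also has linearly independent columns since $X_n\to X$, so $\beta_n$ is genuinely well defined; discarding finitely many $n$ does not affect the $\mathcal{O}$-statement.)

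For the right-hand side, I would apply Theorem \ref{thm:rate_for_fractional_diffusion} to the solution of \eqref{eq:RDE_fBm_theta_linear} with noise vector fields $\theta_k\sigma_k$ and test function $f_m$: since $|(\theta_k\sigma_k)[f_m](y_r)|^2=\theta_k^2\sigma_k[f_m]^2(y_r)=\beta_k\sigma_k[f_m]^2(y_r)$, this yields, $\mathbb{P}$-a.s.,
\[
Y^m_n-(X\beta)^m=\langle f_m(y)\rangle_T^{(1-2H),n}-\sum_{k=1}^K\beta_k\int_0^T\sigma_k[f_m]^2(y_r)\,\rmd r=\mathcal{O}(\delta_n^{H-\epsilon}),
\]
and intersecting the finitely many full-measure sets over $m\in\{1,\dots,M\}$ gives $|Y_n-Y|=\mathcal{O}(\delta_n^{H-\epsilon})$. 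For the design matrix, each $X^{mk}_n-X^{mk}$ is a left-point Riemann-sum error for $\int_0^T\sigma_k[f_m]^2(y_r)\,\rmd r$; since $\sigma_k\in C^1$ and $f_m\in C^2$, the path $r\mapsto\sigma_k[f_m]^2(y_r)$ is $(H-\epsilon)$-Hölder on $[0,T]$, with seminorm controlled by $[y]_{H-\epsilon}$ and bounds on $\sigma_k[f_m]^2$ and $D(\sigma_k[f_m]^2)$ over the compact range of $y$, so
\[
|X^{mk}_n-X^{mk}|\le [\sigma_k[f_m]^2(y)]_{H-\epsilon}\,T\,|\pi_n|^{H-\epsilon}=\mathcal{O}(\delta_n^{H-\epsilon}),
\]
where the last step uses $|\pi_n|^{H-\epsilon}=\mathcal{O}(\delta_n^{H-\epsilon})$ for the dyadic family, by the same exponent comparison used in the proof of Theorem \ref{thm:rate_for_fractional_diffusion} (valid in both regimes $H\le\tfrac12$ and $H>\tfrac12$). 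Summing entrywise gives $\|X-X_n\|_2=\mathcal{O}(\delta_n^{H-\epsilon})$.

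Theorem \ref{thm:least_square_stability} then applies and gives $|\beta-\beta_n|=\kappa(X)^2\,\mathcal{O}_{\|X\|_2\vee|Y|}\!\big(\|X-X_n\|_2\vee|Y_n-Y|\big)=\mathcal{O}_{C_1}(\delta_n^{H-\epsilon})$, where $C_1$ absorbs $T$, $\kappa(X)$, $\|X\|_2$, $|Y|$, and the Hölder seminorms above, each bounded in terms of the quantities listed in the statement. Finally, to pass from $\beta$ to $\theta$, fix $k$ with $\beta_k>0$; since $\beta_{n,k}\to\beta_k$, we have $\beta_{n,k}\ge\beta_k/2>0$ for $n$ large, so $\theta_{n,k}=\sqrt{\beta_{n,k}}$ is defined, and the elementary bound $|\sqrt a-\sqrt b|\le|a-b|/\sqrt a$ for $a>0$, $b\ge0$ gives $|\theta_k-\theta_{n,k}|\le\beta_k^{-1/2}|\beta-\beta_n|=\mathcal{O}_{C_1\vee\beta_k^{-1}}(\delta_n^{H-\epsilon})$, i.e.\ \eqref{eq:fBm convergence rates_theta}.

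I expect the only point needing care to be the Riemann-sum step — confirming that $r\mapsto\sigma_k[f_m]^2(y_r)$ is $(H-\epsilon)$-Hölder with seminorm expressible through the admissible constants, and checking $|\pi_n|^{H-\epsilon}=\mathcal{O}(\delta_n^{H-\epsilon})$; verifying the non-degeneracy hypothesis is immediate here because the limiting equation $Y=X\beta$ is exact, and the remainder is a direct invocation of Theorems \ref{thm:rate_for_fractional_diffusion} and \ref{thm:least_square_stability}.
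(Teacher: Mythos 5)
Your proposal is correct and follows essentially the same route as the paper: verify the non-degeneracy hypothesis via the exact identity $Y=X\beta$ and $Y\ne\boldsymbol{0}$, bound $|Y-Y_n|$ by Theorem \ref{thm:rate_for_fractional_diffusion}, bound $\|X-X_n\|_2$ as a left-endpoint Riemann-sum error using the $(H-\epsilon)$-H\"older continuity of $r\mapsto\sigma_k[f_m]^2(y_r)$, invoke Theorem \ref{thm:least_square_stability}, and pass to $\theta$ via a mean-value bound on the square root. The only cosmetic difference is that the paper packages the Riemann-sum estimate as an application of the sewing lemma, whereas you bound it directly interval by interval; both yield a rate dominated by $\delta_n^{H-\epsilon}$, and your explicit check that $|\pi_n|^{H-\epsilon}=\mathcal{O}(\delta_n^{H-\epsilon})$ is a welcome detail the paper leaves implicit.
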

\begin{proof}
Since $Y= X  \beta^*$, then $\beta \in \operatorname{arg\,min}_{\beta} \|Y - X \beta\|$ with $\|Y - X \beta\|=0$. The assumption $Y\ne \boldsymbol{0}$ enforces the condition \eqref{asm:general_stability_ne} of Theorem \ref{thm:least_square_stability}. Combined with the assumption of linear independence of the columns of $X$, we can apply Theorem \ref{thm:least_square_stability}. Since $t \mapsto \sigma_k[f_m]^2(y_t)$ is $H-\epsilon$ H\"older continuous and $t \mapsto t$ is $\eta$-H\"older continuous for every $\eta < 1$, the 
 sewing lemma (\cite[Lemma 4.2]{FrizHairer}) yields
\begin{gather*}
\int_0^T \sigma_k[f_m]^2(y_r) \rmd r  - \sum_{[t_i,t_{i+1}]\in \pi_n} \sigma_k[f_m]^2(y_{t_i}) (t_{i+1} - t_i) \\
=\sum_{[t_i,t_{i+1}]\in \pi_n} \left( \int_{t_{i}}^{t_{i+1}} \sigma_k[f_m]^2(y_r) \rmd r - \sigma_k[f_m]^2(y_{t_i}) (t_{i+1} - t_i) \right) \\
\leq \sum_{[t_i,t_{i+1}]\in \pi_n} [\sigma_k[f_m]^2(y)]_{H-\epsilon} [\text{id}]_{\eta} (t_{i+1} - t_i)^{H-\epsilon + \eta}  =  [\sigma_k[f_m]^2(y)]_{H-\epsilon} [\text{id}]_{\eta} 2^{-n(H-\epsilon + \eta - 1)}\,,
\end{gather*}
so long as $H-\epsilon + \eta > 1$. Choosing $\eta$ sufficiently close to 1, we find that 
$$
\|X - X_n\|_2 = \mathcal{O}_{C_1}(2^{-n(H-2\epsilon)}).
$$
Moreover, by Theorem \ref{thm:rate_for_fractional_diffusion}, $|Y-Y_n|= \mathcal{O}_{C_1}\left( \delta_n^{H-\epsilon} \right) $. By virtue of Theorem \ref{thm:least_square_stability}, we have 
\begin{align*}
|\beta - \beta_n | & = \mathcal{O}_{C_1}(2^{-n(H-2\epsilon)}) \vee \mathcal{O}_{C_1}\left( \delta_n^{H-\epsilon} \right) \\
& =\kappa(X)^2 \mathcal{O}\left( \delta_n^{H-\epsilon} \right) \,.
\end{align*}
Then, \eqref{eq:fBm convergence rates_theta} follows from the mean-value theorem applied to the square root function.
\end{proof}

We now introduce the estimation of $\theta$ when the estimated Hurst index, $H_n$, is used instead of the true $H$. Define
$
\bar{Y}^m_n=\langle f_m(y)\rangle_{T}^{(1-2H_{n}),n}
$
and let $\bar{Y}_n=(\bar{Y}^m_n)_{1 \leq m \leq M}$, 
\begin{equation}\label{def:theta_est_hurst_unknown}
\bar{\beta}_n=\operatorname{arg\,min}_{\beta} |\bar{Y}_n  - X_n \beta|\,, \quad \textnormal{and} \quad \bar{\theta}_n = \sqrt{\bar{\beta}_n}\,.
\end{equation}
Following the proof of Theorem \ref{thm:parameter rates}, using Corollary \ref{cor:double_approx_rates}, we obtain the following theorem concerning the convergence rates of $\theta$ if the Hurst index is not known.
\begin{theorem} 
Retain the assumptions and notation of Theorem \ref{thm:parameter rates}. Then, for all $\epsilon >0$,  $\bbP$-a.s., 
$$
|\beta - \bar{\beta}_n|  = \mathcal{O}_{C_1}\left( n \delta_n^{H-\epsilon}  \right) \,,
$$
where $\epsilon > 0$ can be chosen arbitrarily small. Moreover, for all $k$ such that  $\beta_k>0$ and for all $\epsilon >0$, $\bbP$-a.s., 
$$
|\theta_k - \bar{\theta}_{n,k}|  = \mathcal{O}_{C_1\vee \beta_k^{-1}} \left(n \delta_n^{H-\epsilon}  \right)\,.
$$
\end{theorem}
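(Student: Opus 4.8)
The plan is to repeat the proof of Theorem \ref{thm:parameter rates} essentially verbatim, with the single modification that the estimated Hurst exponent $H_n$ replaces the true $H$ in the observed data vector, i.e.\ $\bar Y_n$ replaces $Y_n$ while the design matrix $X_n$, the limiting objects $X$, $Y$, $\beta$, and hence the applicability of the least-squares stability estimate Theorem \ref{thm:least_square_stability}, are all unchanged. First I would note that the hypotheses of Theorem \ref{thm:least_square_stability} remain in force: the columns of $X$ are linearly independent by assumption, $|X\beta - Y| = 0 \neq |Y|$ since $Y \neq \boldsymbol{0}$, and $\bar Y_n \to Y$ as $n \to \infty$ (which follows from the estimate of the next step). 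As in Theorem \ref{thm:parameter rates}, the sewing lemma gives $\|X - X_n\|_2 = \mathcal{O}_{C_1}(2^{-n(H-2\epsilon)})$, a quantity dominated by $n\delta_n^{H-\epsilon}$, so it suffices to control $|Y - \bar Y_n|$.

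The one genuinely new estimate is $|Y - \bar Y_n|$. Here I would use that, by Theorem \ref{thm:rate_for_fractional_diffusion} applied to \eqref{eq:RDE_fBm_theta_linear} (equivalently \eqref{eq:exact_equation}), $\bbP$-a.s.\ $Y^m = \langle f_m(y)\rangle_T^{(1-2H)} = \sum_{k=1}^K \int_0^T |\sigma_k[f_m](y_r;\theta)|^2 \, \rmd r$ for every $m$, while $\bar Y_n^m = \langle f_m(y)\rangle_T^{(1-2H_n),n}$. Their difference is precisely the quantity bounded by Corollary \ref{cor:double_approx_rates}, whose standing hypothesis \eqref{asm:convergence_of_alpha_est} is verified here because Theorem \ref{thm:estimating_H} yields $(H - H_n)\ln|\pi_n| = \mathcal{O}(n\delta_n^{H-\epsilon}) = \mathcal{O}(1)$. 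Hence $\bbP$-a.s.\ $|Y - \bar Y_n| = \mathcal{O}_{C_1}(n\delta_n^{H-\epsilon})$.

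Combining the two bounds, the right-hand side of the estimate in Theorem \ref{thm:least_square_stability} is $\|X - X_n\|_2 \vee |Y - \bar Y_n| = \mathcal{O}_{C_1}(n\delta_n^{H-\epsilon})$, whence $|\beta - \bar\beta_n| = \kappa(X)^2 \mathcal{O}_{C_1}(n\delta_n^{H-\epsilon})$, which is the first claim after absorbing $\kappa(X)^2$ into $C_1$. For the componentwise statement on $\theta$, I would apply the mean value theorem to the square root: $|\theta_k - \bar\theta_{n,k}| = |\sqrt{\beta_k} - \sqrt{\bar\beta_{n,k}}| \le \tfrac12 (\beta_k \wedge \bar\beta_{n,k})^{-1/2} |\beta_k - \bar\beta_{n,k}|$; since $\bar\beta_{n,k} \to \beta_k > 0$ there is $N$ with $\bar\beta_{n,k} > \beta_k/2$ for $n \ge N$, which produces the factor $\beta_k^{-1}$ in the constant and the stated rate $n\delta_n^{H-\epsilon}$.

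There is no real analytic obstacle beyond what is already packaged in Corollary \ref{cor:double_approx_rates}; the extra factor $n$ relative to Theorem \ref{thm:parameter rates} is exactly the logarithmic price of the double approximation and enters linearly through $|Y - \bar Y_n|$. The step I would be most careful with is the measure-theoretic bookkeeping: one must fix, once and for all, a single full-measure event on which simultaneously the rough-path lift lies in $\mathscr{C}^{H-\epsilon}([0,T];\bbR^K)$ and the uniform fBm rate of Proposition \ref{prop:fBm_cv_rates} holds, so that Theorem \ref{thm:rate_for_fractional_diffusion}, Theorem \ref{thm:estimating_H} and Corollary \ref{cor:double_approx_rates} are all valid pathwise on that event; everything else in the argument is deterministic.
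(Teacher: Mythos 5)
Your proposal is correct and follows exactly the route the paper intends: the paper gives no separate proof but states that the result follows "following the proof of Theorem \ref{thm:parameter rates}, using Corollary \ref{cor:double_approx_rates}," which is precisely your argument — replace $Y_n$ by $\bar Y_n$, bound $|Y-\bar Y_n|$ via Corollary \ref{cor:double_approx_rates} (whose hypothesis \eqref{asm:convergence_of_alpha_est} you rightly verify from Theorem \ref{thm:estimating_H}), keep the $X_n$ estimate unchanged, and finish with Theorem \ref{thm:least_square_stability} and the mean value theorem for the square root.
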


\begin{remark}
The proposed method may not work if the number of noise sources is larger than the dimension of the dynamics, i.e., if $K>d$. Indeed, consider $d=1$, $K=2$, and the simple dynamics
$$
\rmd y_t = \theta_1 \rmd B_t^{1} + \theta_2 \rmd B_t^2, \qquad y_0 = 0.
$$
Since $y_t \sim \mathcal{N}(0, (\theta_1^2 + \theta_2^2) t^{2H})$, we should be able to estimate $\theta_1^2 + \theta_2^2$, but not the individual terms in the sum. To make this rigorous, observe that
$$
\rmd f(y_t) = \theta_1 f'(y_t) \rmd B_t^{1} + \theta_2 f'(y_t) \rmd B_t^2,
$$
and thus for all $f \in C^2(\bbR^d)$
$$
\langle f(y) \rangle_t^{(1-2H)} = \theta_1^2  \int_0^t f'(y_r) \rmd r + \theta_2^2  \int_0^t f'(y_r) \rmd r\,.
$$
We see then that for any $\{f_1, \dots, f_M\} \subset C^2(\bbR^d)$,  the matrix defined in \eqref{eq:def least square matrices} above satisfies
$$
X^{m1} =X^{m2} = \int_0^T f_m'(y_r) \rmd r\,,
$$
and hence $\textnormal{rank}(X)<2$ which implies that $X^TX$ is not invertible and $\kappa(X) = \infty$. 

See, however, Section \ref{sec:1dnonlinear} for an example in which we can find a set of functions for which the condition number is small.  
\end{remark}

\section{Experiments}

In each of our experiments, we consider a rough differential equation (RDE)
\begin{equation}\label{eq:rde_example_fBm}
\rmd y_t = u_t(y_t) \rmd t + \sum_{k=1}^K \theta_{k} \sigma_k(y_t) \rmd \bB_t^k\,, \;\; t\in (0,1], \quad y|_{t=0}=y_0 \in \mathbb{R}^d\,,
\end{equation}
where $\beta, \sigma_1, \ldots, \sigma_K: \bbR^d\rightarrow \bbR^d$ are smooth vector fields, $\bB$ is the geometric lift of a $K$-dimensional fractional Brownian paths obtained by a limit of a lifted piecewise linear approximation with Hurst parameter  $H\in \{0.35, 0.4, 0.45, 0.5, 0.6, 0.7\}$.

To obtain a realization of a solution of \eqref{eq:rde_example_fBm} for a fixed initial condition, we simulate a realization of an $\bbR^K$-valued fBm on a uniform dyadic partition $\pi_f$ of $[0,1]$ using the Davies-Harte method \cite{davies1987tests} and then compute either an explicit solution or an approximate solution if an explicit solution does not exist. In particular, we compute an approximate solution using a third-order Runge-Kutta method on $\pi_f$ satisfying the conditions of Table 1 of \cite{redmann2022runge}. By Theorem 4.2 in \cite{redmann2022runge}, such approximate solutions converge to the solution of \eqref{eq:rde_example_fBm} in the supremum norm with rate $\mathcal{O}(|\pi_f|^{2H-\frac{1}{2}-\varepsilon})$ for any $\varepsilon>0$. 

To evaluate the convergence rate as a function of the mesh size, we sub-sample the solutions on uniform dyadic partitions $\pi_n$ of the interval $[0,1]$ with mesh sizes $|\pi_n|=2^{-n}$, $n\in \{2, 3,\ldots, f\}$. For each realization and sub-sampled solution on $\pi_n$, we estimate 
\begin{itemize}
\item the Hurst parameter $H_n$ using \eqref{def:hurst_estimator};
\item the parameter $\theta_n$ using \eqref{def:theta_est_hurst_known};
\item the parameter $\bar{\theta}_n$ using \eqref{def:theta_est_hurst_unknown}
\end{itemize}
using a family of functions $\bbF$ such that $X_n$ has low condition number. For each example, we give a box plot of  $\log_2\left(\frac{H-H_n}{H\delta_n^H}\right)$, $\log_2\left(\frac{|\theta-\theta_n|}{\delta_n^H}\right)$, and $\ln_2\left(\frac{|\theta-\bar{\theta}_n|}{n\delta_n^H}\right)$ for each $n=\log_2(|\pi_n|^{-1})$ and $H$. We normalize the error rate for the Hurst estimation, viz $\log_2\left(\frac{H-H_n}{H\delta_n^H}\right)$, since we are not keeping track of the the constants appearing in the convergence rates. 

In the box-plots the upper and lower extremes of the box represents the 75th and 25th percentiles respectively, and the line inside the box represents the median. The whiskers encapsulates the extreme values of the data points, and in particular, we note that there are no outliers in the plots.

The code for the experiments was written in Google JAX, making extensive use of the Equinox \cite{kidger2021equinox} and Diffrax \cite{kidger2021on} libraries developed by Patrick Kidger and others.

In each example, our plots suggest that our convergence rates are not sharp across all Hurst-indicies. Thus, we leave finding the optimal convergence rate as an open problem. 

\subsection{1d non-linear}\label{sec:1dnonlinear}

In this example, we consider the RDE given by
$$
\rmd y_t = 0.5\sin(y_t)\rmd \bB^1_t + 0.8 \cos(y_t)\rmd \bB^2_t\,, \;\; t\in (0,1], \quad y|_{t=0}=1
$$
and use 1000 realizations.
Comparing with \eqref{eq:rde_example_fBm}, we take $\theta = [0.5 \; \; 0.8]^{\top}$, $\sigma_1(y)=\sin(y)$, and $\sigma_2(y)=\cos(y)$. We approximate the solution using Heun's third-order method with $|\pi_f|=2^{-17}$. Based on experiments, we found that the choice $\bbF=\{f_1(y)=\sin(y), f_2(y)=\cos(y)\}$ leads to an $X_n$ with a low condition number. 

\begin{figure}[H]
\centering
\caption{Plot of ``1d non-linear'' rate of convergence of Hurst estimation}  
\includegraphics[width=10cm]{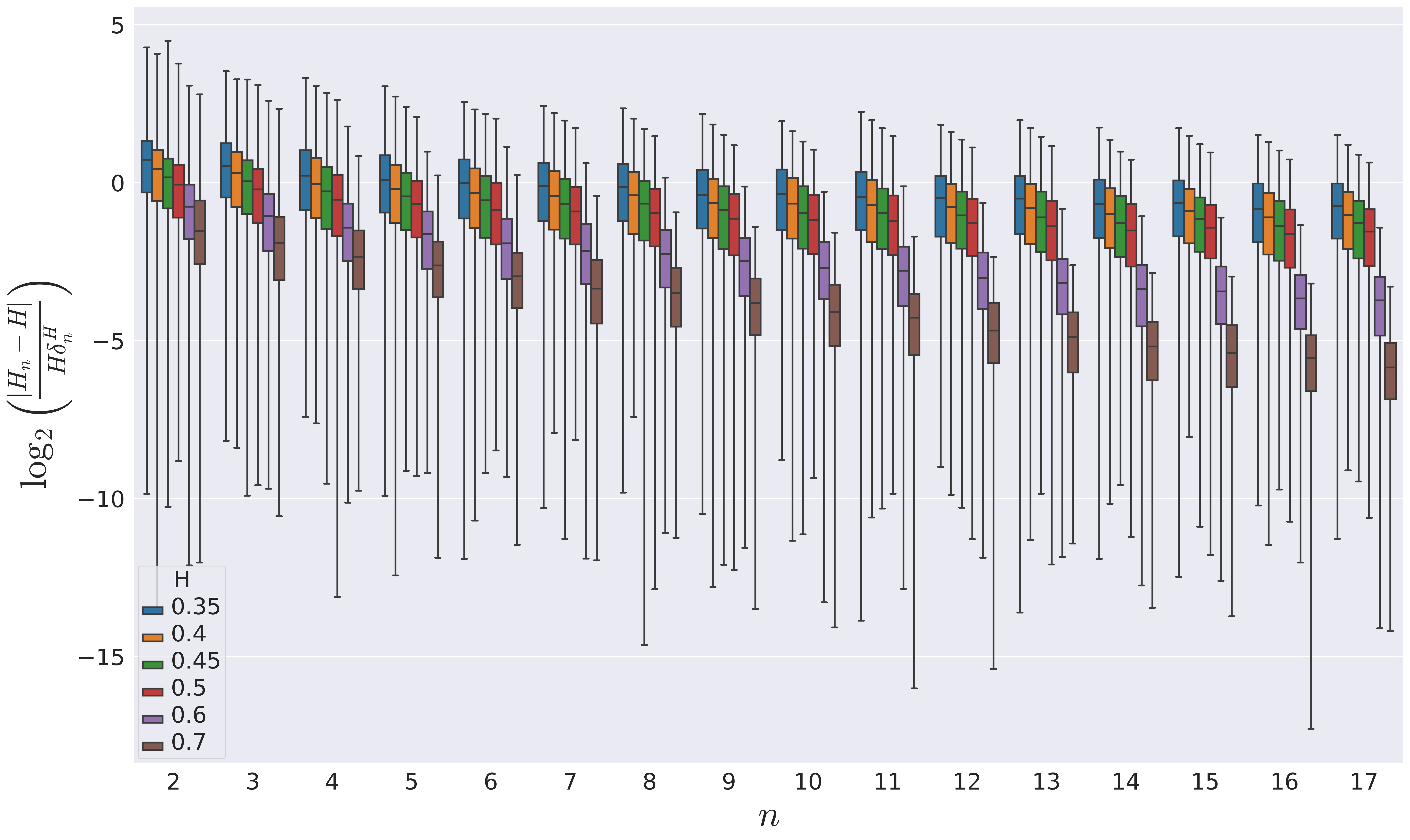}
\end{figure}

\begin{figure}[H]
\centering
\caption{Plot of ``1d non-linear'' rate of convergence of theta estimation if $H$ is known}  
\includegraphics[width=10cm]{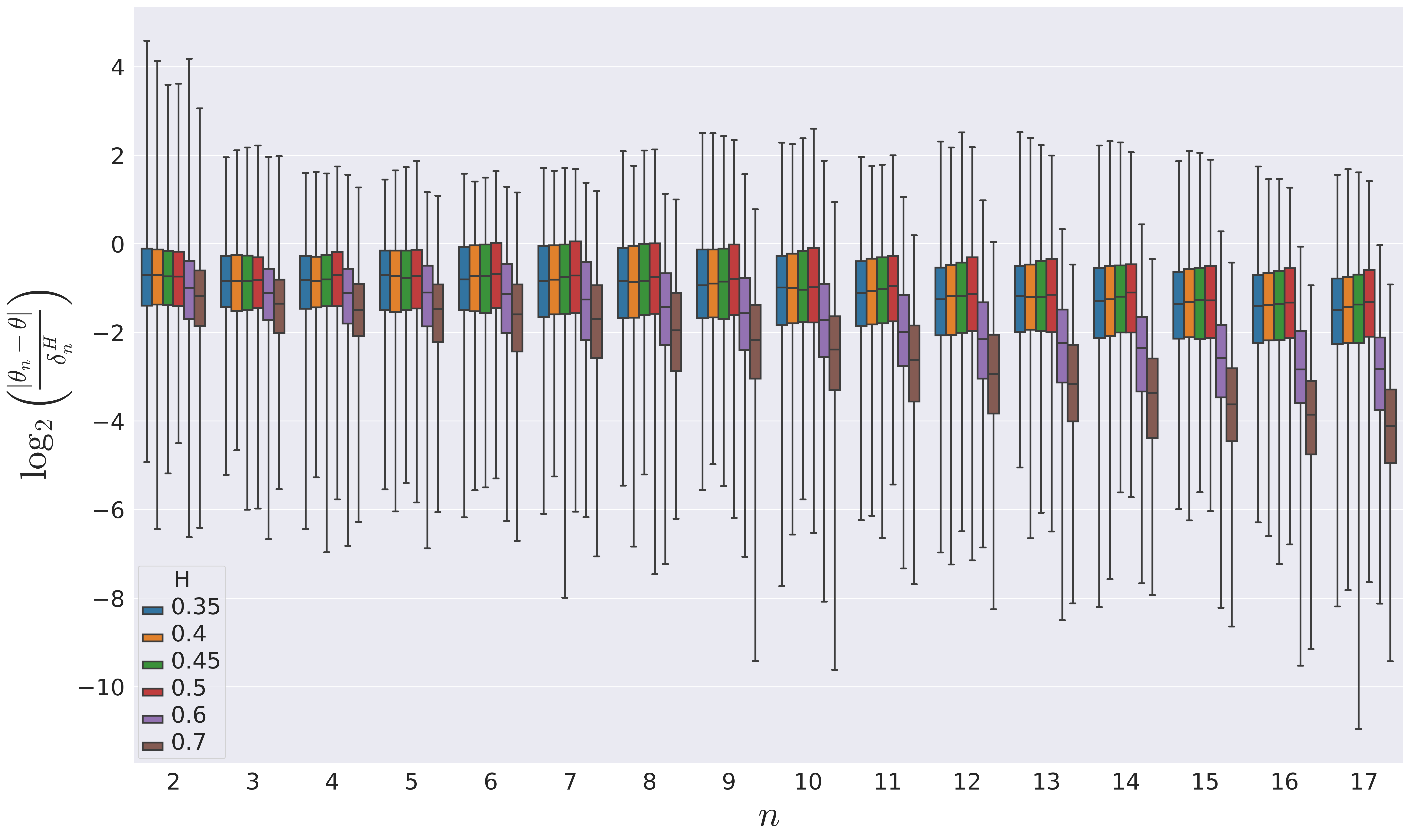}
\end{figure}

\begin{figure}[H]
\centering
\caption{Plot of ``1d non-linear'' rate of convergence of theta estimation if $H$ is unknown} 
\includegraphics[width=10cm]{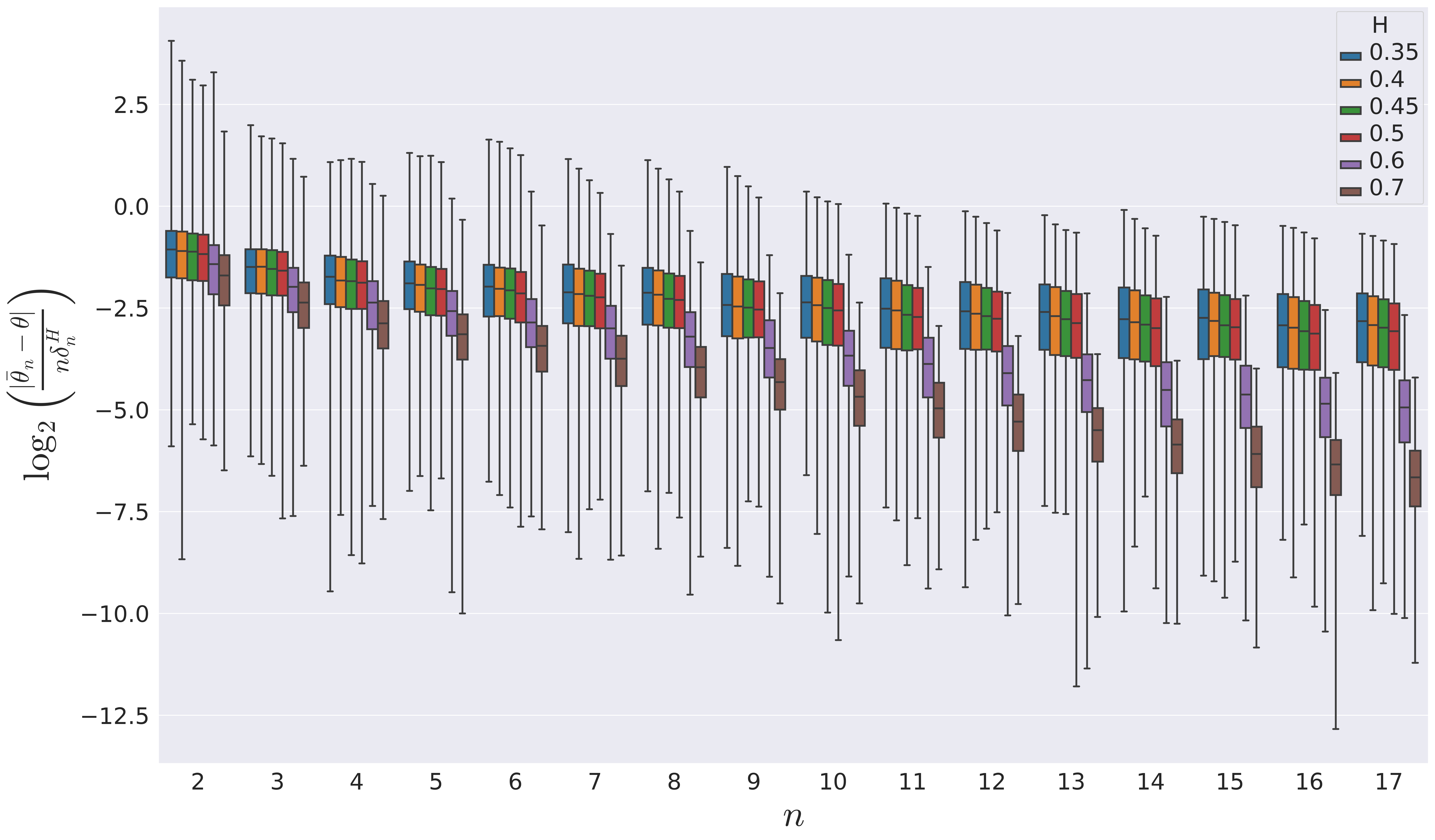}
\end{figure}

\subsection{2d linear}

Borrowing from the experiments in \cite{redmann2022runge}, in this example, we consider the RDE given by
$$
\rmd y_t = 0.1A_1y_t \rmd \mathbf{B}^1_t +0.1 A_2y_t \rmd \mathbf{B}^2_t\,, \;\; t\in (0,1], \quad y|_{t=0}=[0.5 \;\; 1]^{\top}\,,
$$
where $A_1 = \begin{bmatrix}
1 & 0.5 \\
0.5 & 1
\end{bmatrix}$ and $A_2 = \begin{bmatrix}
2 & -1 \\
-1 & 2
\end{bmatrix}$ and use 1000 realizations. Comparing with \eqref{eq:rde_example_fBm}, we take $\theta = [0.1 \; \; 0.1]^{\top}$, $\sigma_1(y)=A_1 y$, and $\sigma_2(y)=A_2y$. Since $0.1 A_1$ and $0.1 A_2$ commute, the analytic solution is given by $y_t= \exp\left(0.1 A_1 B^1_t + 0.1A_2 B^2_t\right)y_0$. Based on experiments, we found that the choice
$\bbF=\{f_1(y^1, y^2)=y^1, f_2(y)=y^2\}$ leads to an $X_n$ with a low condition number. 

\begin{figure}[H]
\centering
\caption{Plot of ``2d linear'' rate of convergence of Hurst estimation}  
\includegraphics[width=10cm]{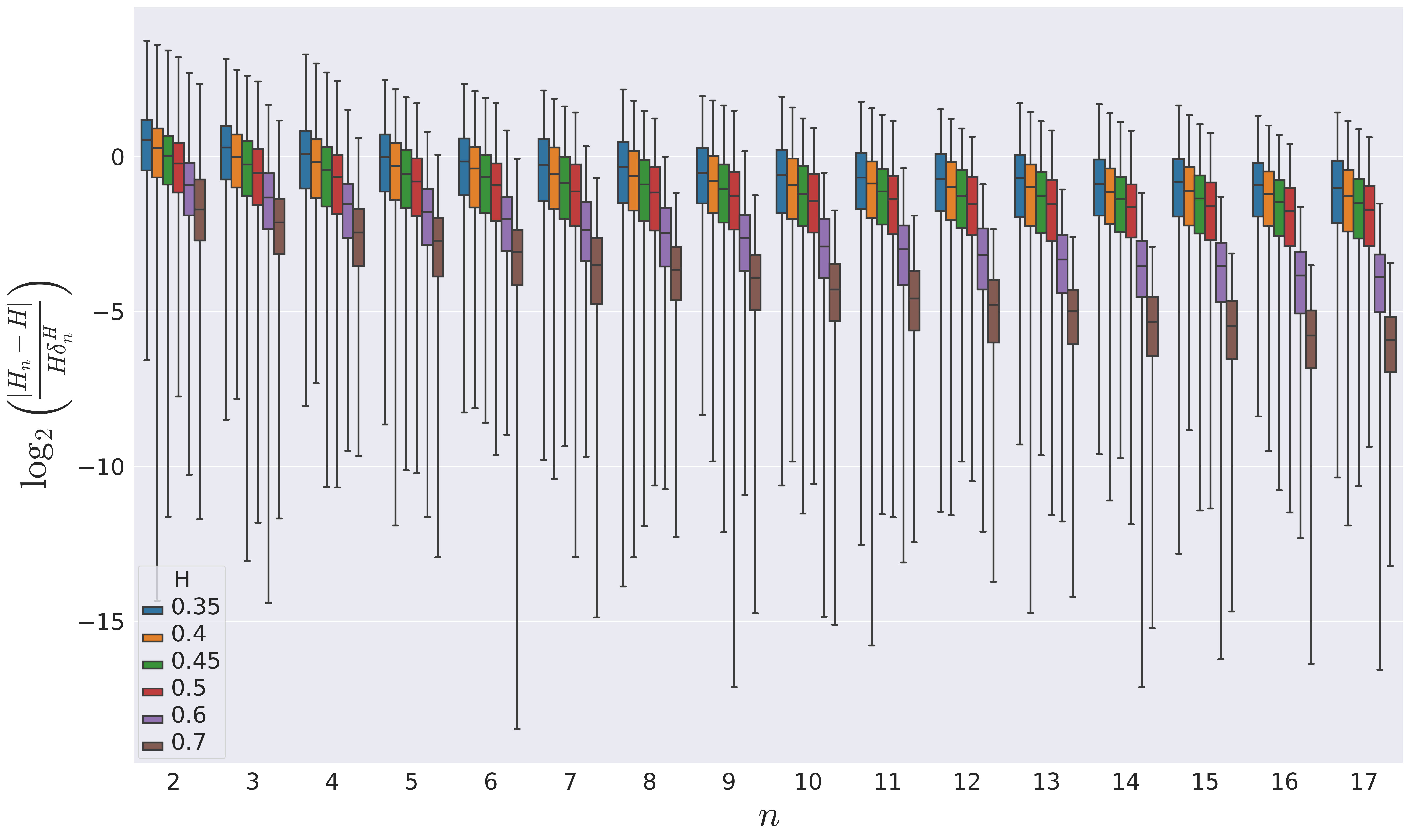}
\end{figure}

\begin{figure}[H]
\centering
\caption{Plot of ``2d linear'' rate of convergence of theta estimation if $H$ is known}  
\includegraphics[width=10cm]{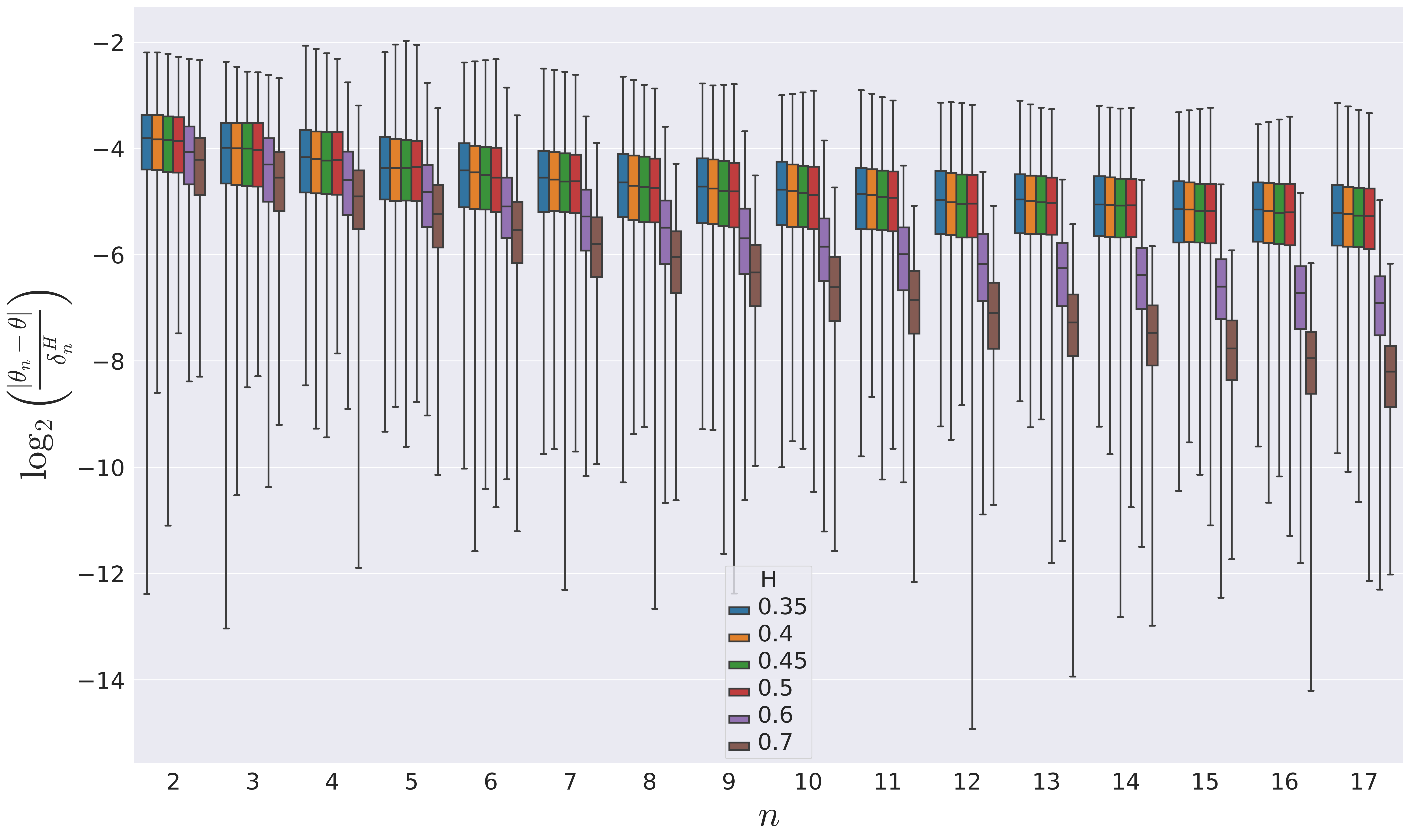}
\end{figure}

\begin{figure}[H]
\centering
\caption{Plot of ``2d linear'' rate of convergence of theta estimation if $H$ is unknown} 
\includegraphics[width=10cm]{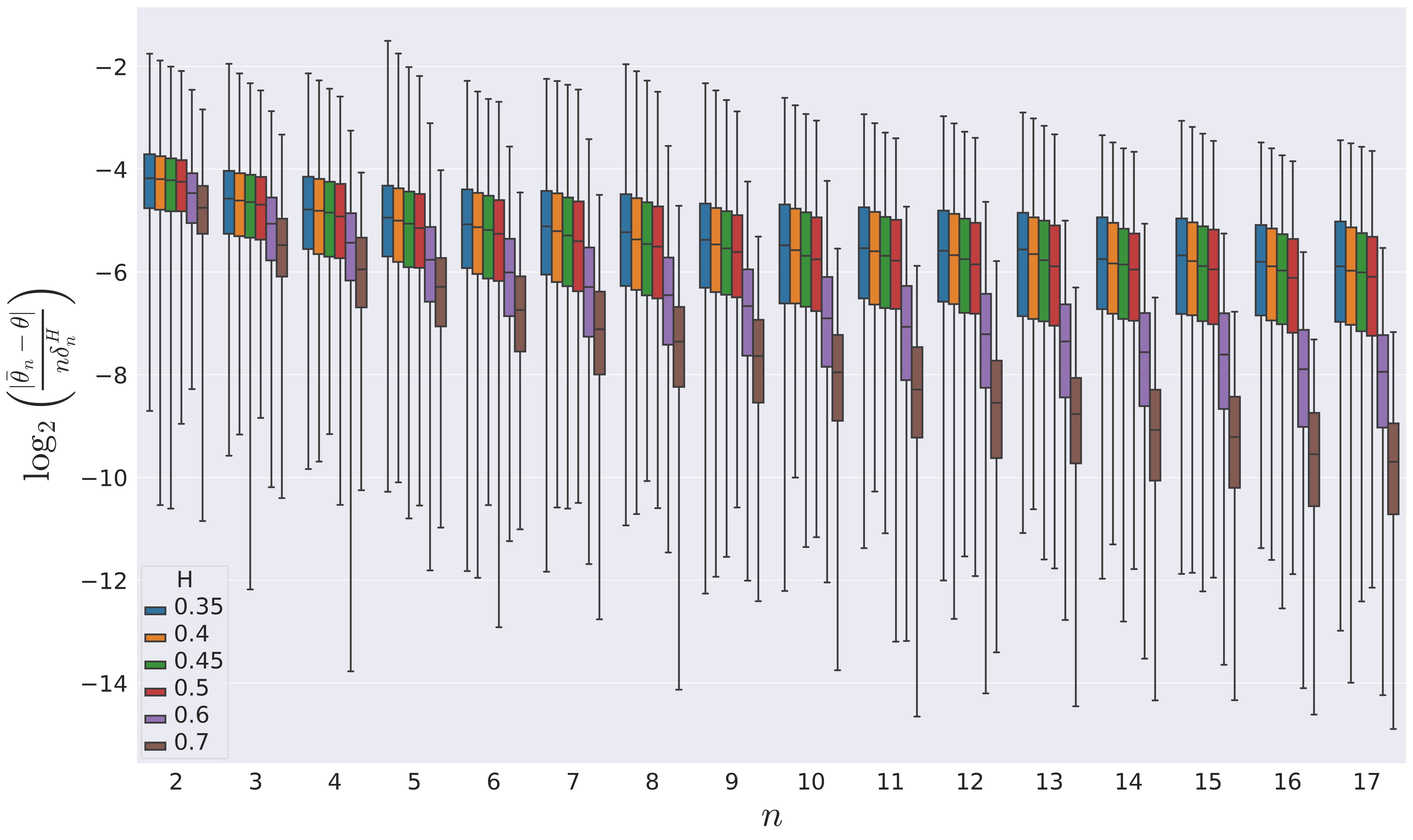}
\end{figure}

\subsection{2d non-linear}
In this example, we consider the RDE given by
$$
\begin{cases}
\rmd x_t = 0.5 \sin(x_t)\cos(y_t)\rmd \bB^1_t\,, \\
\rmd y_t = 0.8 \cos(x_t)\sin(y_t)\rmd \bB^2_t\,,
\end{cases} t\in (0,1], \quad y|_{t=0}=[0.5 \;\; 0.5]^{\top}\,,
$$
and use 1000 realizations. Comparing with \eqref{eq:rde_example_fBm}, we take $\theta = [0.5 \; \; 0.8]^{\top}$, $\sigma_1(x,y)=[\sin(x)\cos(y) \; \; 0]^{\top}$, and $\sigma_2(x,y)=[0 \; \; \cos(x)\sin(y)]^{\top}$. We compute an approximate solution using Heun's third-order method.  Based on experiments,  we found that the choice $\bbF=\{f_1(x,y)=x, f_2(x, y)=y\}$ leads to an $\bbX_n$ with a low condition number. 

\begin{figure}[H]
\centering
\caption{Plot of ``2d non-linear'' rate of convergence of Hurst estimation}  
\includegraphics[width=10cm]{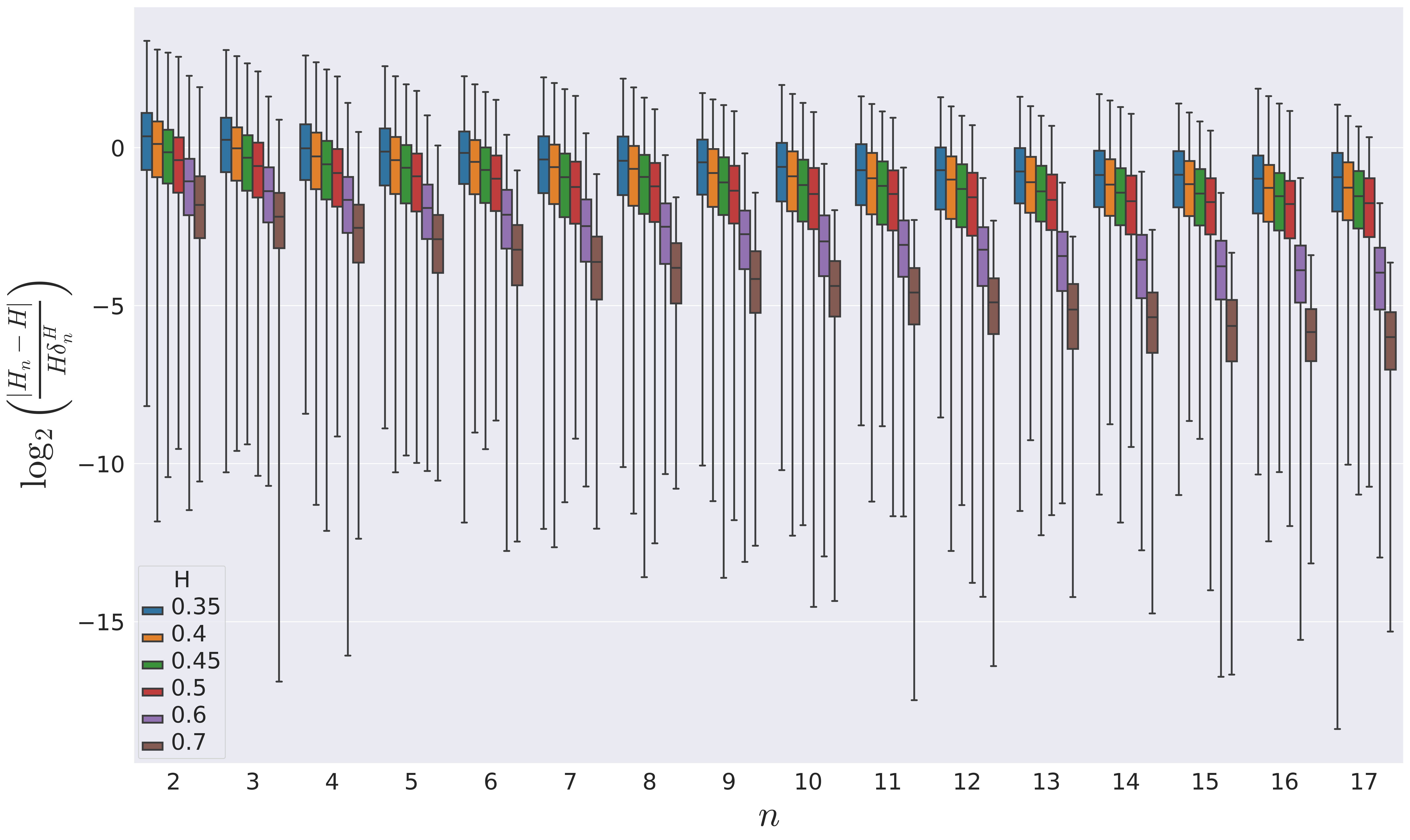}
\end{figure}

\begin{figure}[H]
\centering
\caption{Plot of ``2d non-linear'' rate of convergence of theta estimation if $H$ is known}  
\includegraphics[width=10cm]{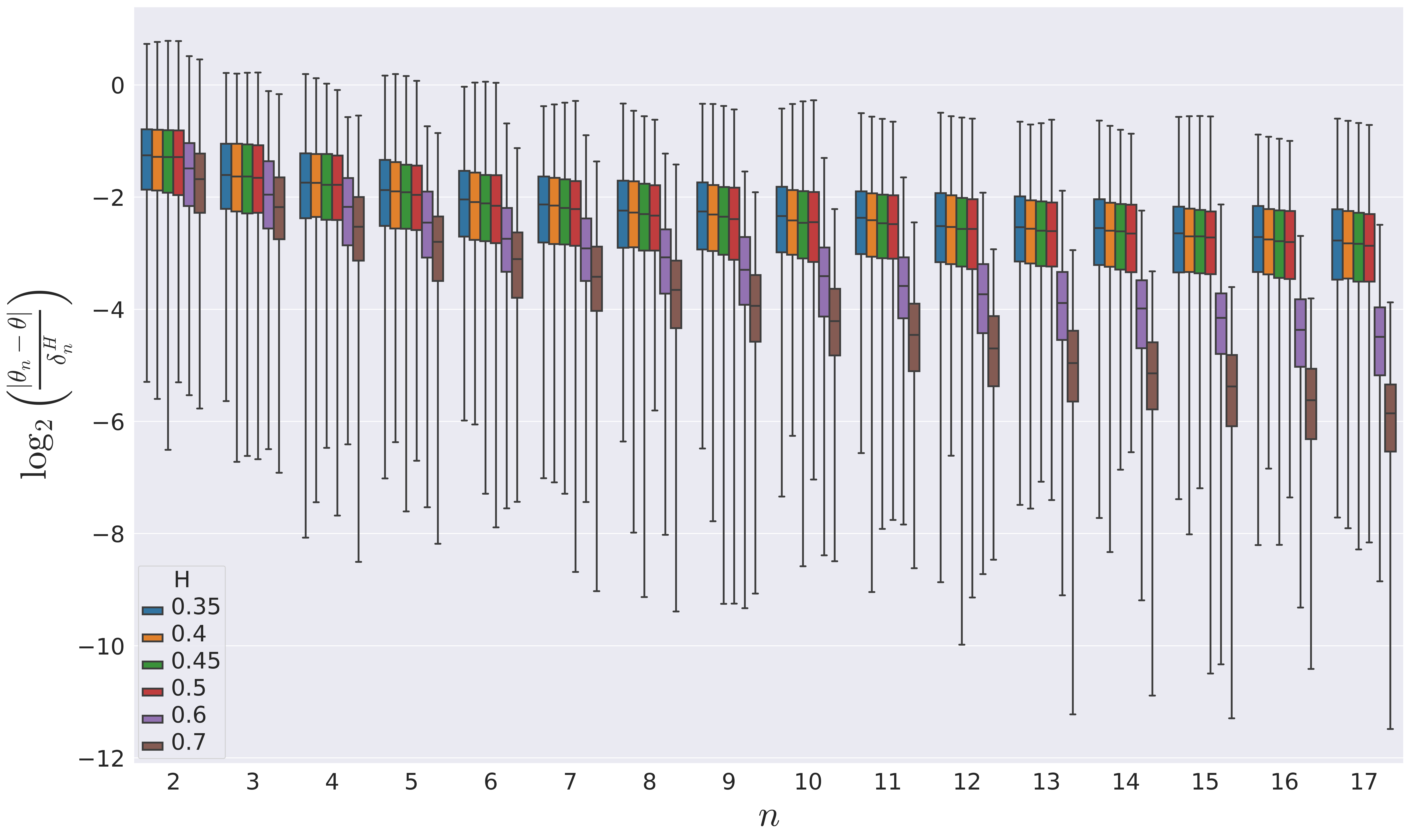}
\end{figure}

\begin{figure}[H]
\centering
\caption{Plot of ``2d non-linear'' rate of convergence of theta estimation if $H$ is unknown} 
\includegraphics[width=10cm]{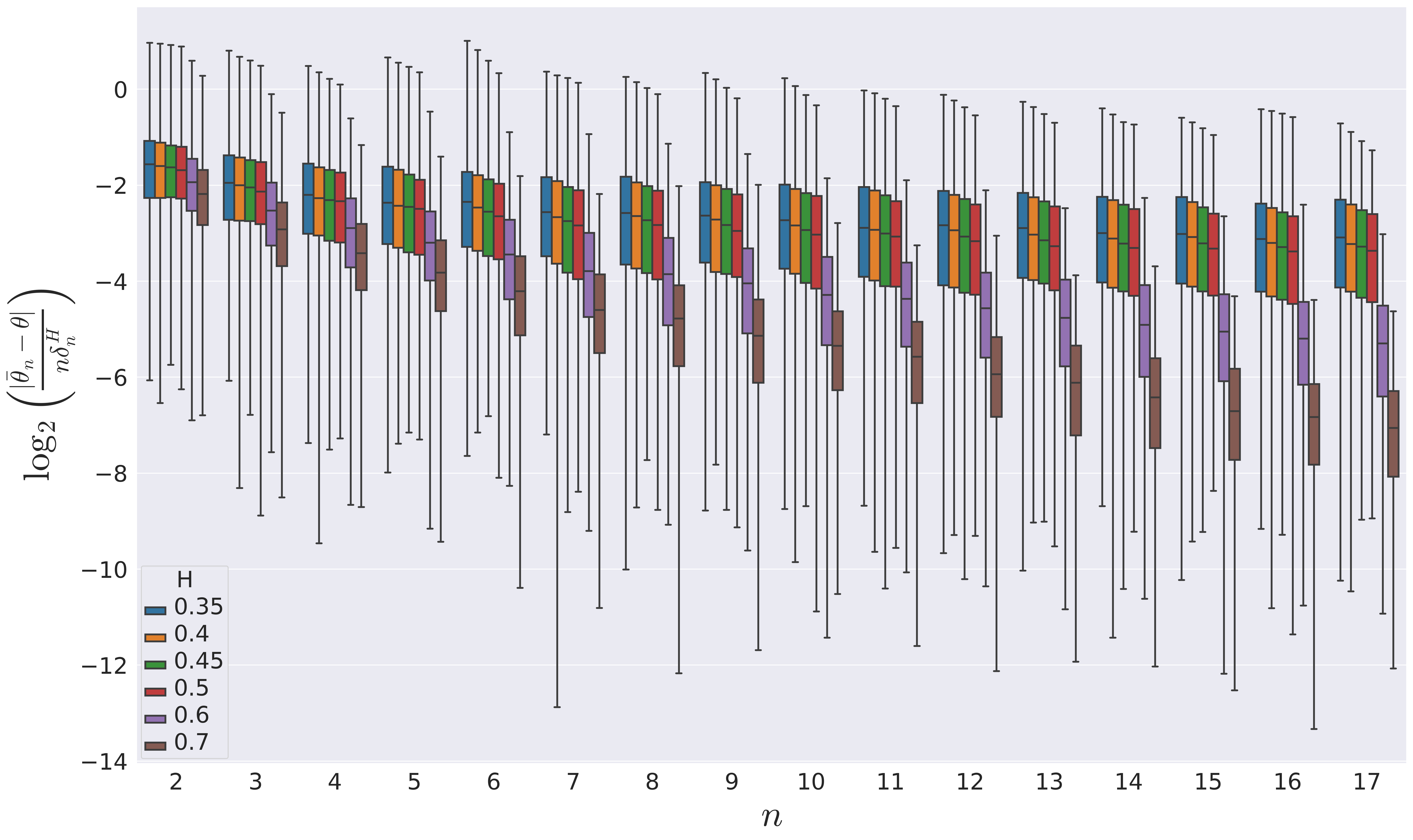}
\end{figure}

\subsection{2d Euler system with Lie transport noise (fluid RPDE)}\label{sec:Euler_system}
In this example, we consider a rough partial differential equation (RPDE) governing a two-dimensional turbulent fluid flow on the flat torus $\mathbb{T}^2=[0,2\pi]^2/\sim$. Specifically, we consider a forced and damped Euler system with Lie transport fBm noise in its vorticity formulation:
\begin{equation}\label{eq:euler2d_vform}
\begin{cases}
\rmd \omega_t + (u_t\cdot \nabla \omega_t + 0.1\sin(8x) - 0.01 \omega_t) \rmd t + \sum_{k=1}^8\theta_k\xi_k \cdot \nabla \omega_t \rmd \bB_t^k=0\,, \quad t\in (0, 1]\,,\\
u_t = \Delta^{-1}\nabla^{\perp}\omega_t\,, \quad t\in (0,1]\,,\\
\omega|_{t=0}=\phi\,,
\end{cases}
\end{equation}
where
\begin{itemize}
\item $u_t(x,y)$ represents the spatial velocity of the fluid at $(x,y)\in \bbT^2$;
\item $\omega_t(x,y)=\nabla^{\perp}u_t(x,y)$ represents the vorticity of the fluid at $(x,y)\in \bbT^2$;
\item $\Delta = \partial_x^2 + \partial_y^2$ denotes the Laplacian, $\nabla^{\perp}= -\partial_{y} \boldsymbol{i} + \partial_x \boldsymbol{j}$ denotes the curl operator, and thus $\Delta^{-1}\nabla^{\perp}$ is the Biot-Savart operator on $\bbT^2$;
\item $\{\xi_k\}_{k=1}^8=\{\nabla^{\perp}\alpha_k\}_{k=1}^8$ are a class of smooth divergence-free vector fields, where the potential functions are given by
$$
\alpha_1(x,y) = \sin(y), 
\quad \alpha_2(x,y)=\cos(y), 
\quad \alpha_3(x,y)=\sin(x), 
\quad \alpha_4(x,y)=\cos(x),
$$
$$
\alpha_5(x,y)=\sin(y), 
\quad \alpha_6(x,y)=\cos(y), 
\quad \alpha_7(x,y)=\sin(x+y), \quad \alpha_8(x,y)=\cos(x+y);
$$
\item  $\theta_k=0.01$ for all $k\in \{1,\ldots, 8\}$;
\item $\phi$ is a mean-free smooth initial condition.
\end{itemize}
The introduction of Equation \eqref{eq:euler2d_vform}, without the forcing and damping, but with a general rough path, appears in \cite{crisan2022variational}, where the authors also derived its variational characterization. Furthermore, \cite{crisan2022solution, hofmanova2021rough} established the well-posedness of the rough, unforced, and un-damped system, both with and without viscosity.  In \cite{cotter2019numerically}, the authors studied Equation \eqref{eq:euler2d_vform} with slip-flow boundary conditions numerically in the Brownian case  (i.e., $H=1/2$) to represent a filtered and projected solution of a deterministic forced and damped Euler system derived from a high-resolution simulation. In \cite{lang2021pathwise}, the authors applied a similar estimation methodology using quadratic variation to estimate parameters of the unforced and un-damped fluid equation in the Brownian case.

A minor extension of \cite[Theorem 3.10]{crisan2022solution} shows that the system \eqref{eq:euler2d_vform} is globally well posed in Sobolev spaces $W^{m,2}(\mathbb{T}^2)$ with arbitrary integer regularity $m\ge 2$. Indeed, the addition of the forcing and damping terms in the drift play a minor role in terms of the well-posedness analysis. We refer to \cite{crisan2022solution} for a precise definition of a solution. However, we note that for a fixed test function $\eta \in C^{\infty}(\mathbb{T}^2)$, the scalar-valued path $t \mapsto (\omega_t, \eta)_{L^2(\bbT^2)} := \int_{\mathbb{T}^2} \omega_t(x,y) \eta(x,y)\rmd x \rmd y$ is controlled by $B$ and its Gubinelli derivative is given by
$$
(\omega_t, \eta)_{L^2(\bbT^2)}' = \left[ \theta_1 (\omega_t, \xi_1\cdot \nabla \eta)_{L^2(\bbT^2)}  \dots  \theta_8 (\omega_t, \xi_8\cdot \nabla \eta)_{L^2(\bbT^2)} \right]\in \bbR^8\,.
$$
Thus, by Theorem \ref{thm:control_consistency}, for all $t\in [0,1]$,
\begin{equation}\label{eq:fluid_scaled_variation}
\langle (\omega_t, \eta)_{L^2(\bbT^2)}\rangle^{(1-2H)}_t = \sum_{k=1}^8 \theta_k^2 \int_0^t  (\omega_r, \xi_k\cdot \nabla \eta)_{L^2(\bbT^2)}^2 \rmd r\,.
\end{equation}

We discretize \eqref{eq:euler2d_vform} in space using a pseudo-spectral method with wave numbers $|n|\le N=2^5$, $n\in \bbZ^2$. The drift and noise convection terms in \eqref{eq:euler2d_vform} are discretized using the conservative form, applying dealiasing with the 2/3-rule before taking the products in physical space \cite{orszag1971elimination}. The spatially discretized equation can viewed as an RDE (e.g., \eqref{eq:rde_example_fBm}) and is discretized in time using an optimal third-order low storage third-order strong stability-preserving (SSP) Runge-Kutta scheme with step size $|\pi_f|=2^{-15}$ \cite{gottlieb2001strong}. As described in \cite{hussaini1987spectral, lacasce1996baroclinic}, we also apply an exponential 2/3rd cut-off filter to the solution at the end of each time step to prevent enstrophy from building up at high wave numbers. 

With minor modifications of the proof of \cite[Theorem 3.10]{crisan2022solution}, it is easy to see that this scheme is consistent in the sense that, passing to the limit as $|\pi_f|\downarrow 0$ and $N\rightarrow \infty$, the numerical approximation converges to the true solution in the $W^{m,2}$-norm uniformly in time $t\in [0,1]$.\footnote{As far as we know, the problem of obtaining joint space-time convergence rate is open.} 

A single initial condition for our experiments was obtained by computing a single realization of a solution with the initial condition $\omega(x,y)=\sin(x) + \sin(y)$ for $t\in [-200, 0]$ and $H=0.5$. This field is then is used as the initial condition for all $H\in \{0.35, 0.4, 0.45, 0.5, 0.6, 0.7\}$. 

Based on experiments we found that the choice 
$$
\bbF=\{
f_{n_1,n_2}(\omega)=(\omega, \cos([n_1 \; n_2]^{\top}\cdot))_{L^2(\mathbb{T}^2)}\}_{n_1,n_2=1}^7\,,
$$  
which is simply the lowest 49 components of the Fourier cosine series of $\omega$, leads to an $X_n$ with a low condition number. The $L^2$-inner-product on the right-hand-side of \eqref{eq:fluid_scaled_variation} is computed numerically using the fast Fourier transform just as the convection terms are computed in the pseudo-spectral scheme. For this example, we used 500 realizations. 

\begin{figure}[H]
\centering
\caption{Plot of ``2d Euler'' rate of convergence of Hurst estimation}  
\includegraphics[width=10cm]{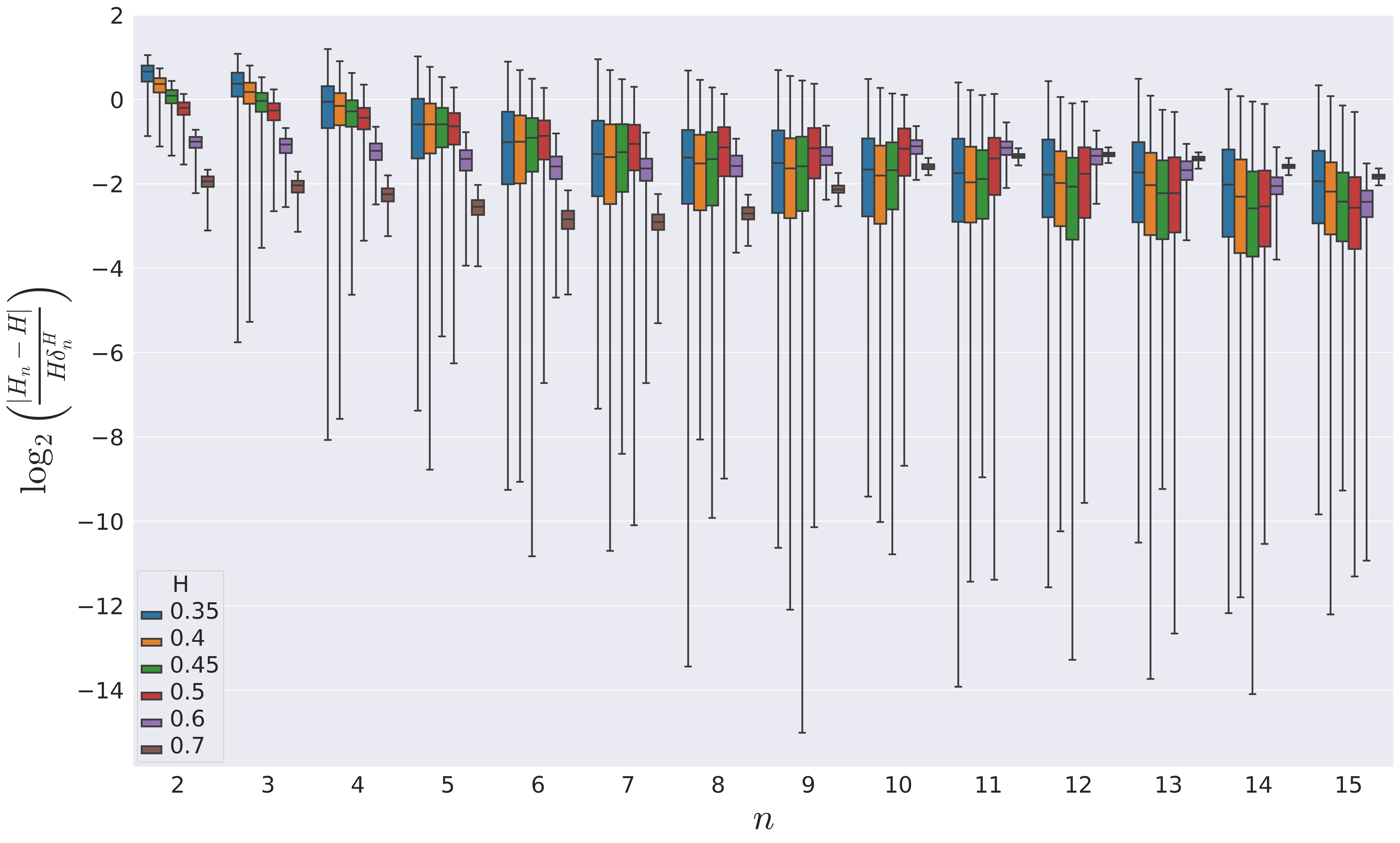}
\end{figure}
\begin{figure}[H]
\centering
\caption{Plot of ``2d Euler'' rate of convergence of theta estimation if $H$ is known}  
\includegraphics[width=10cm]{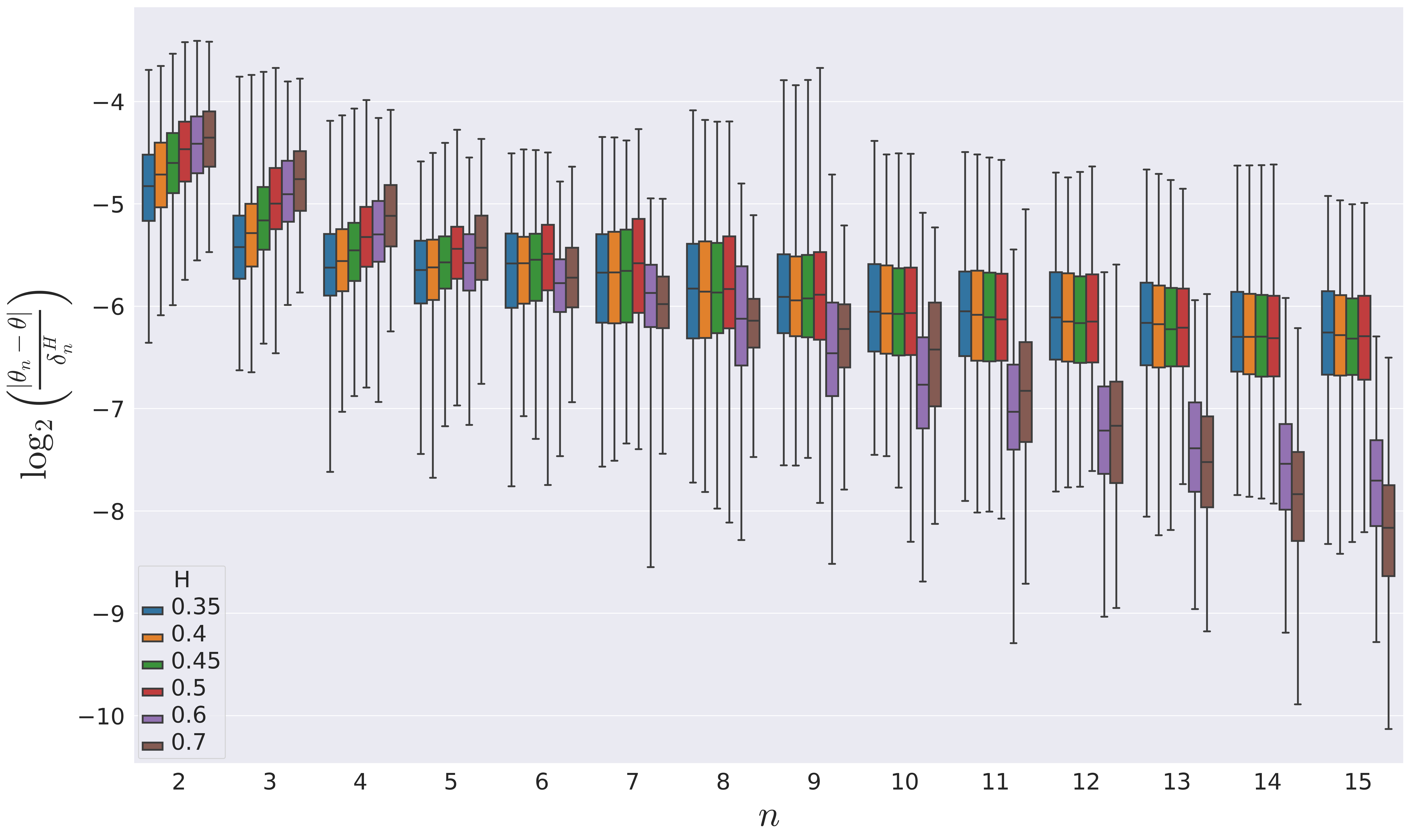}
\end{figure}
\begin{figure}[H]
\centering
\caption{Plot of ``2d Euler'' rate of convergence of theta estimation if $H$ is unknown}  
\includegraphics[width=10cm]{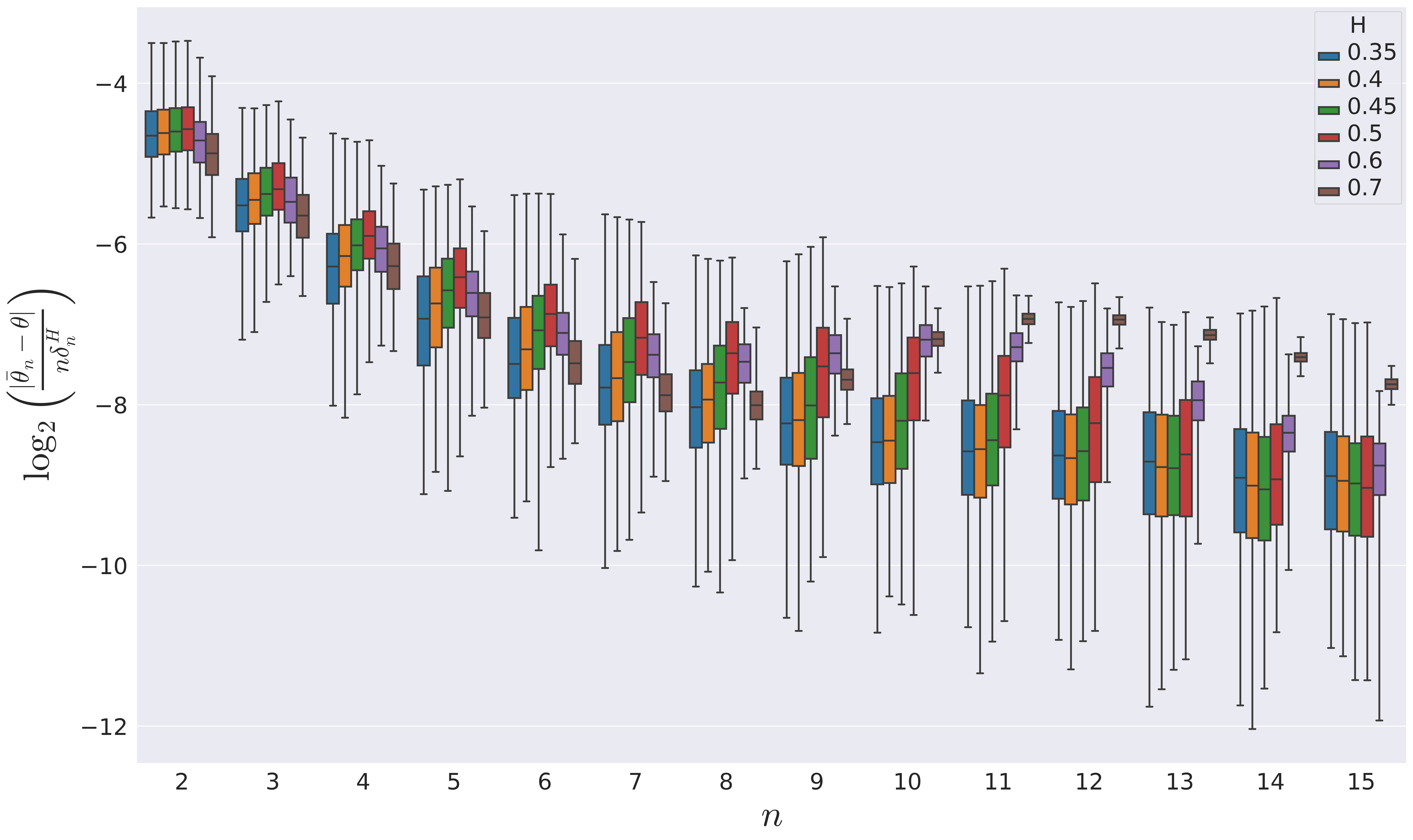}
\end{figure}

\bibliographystyle{siam}
\bibliography{bibliography}
\end{document}